\renewcommand{\paragraph}{%
  \@startsection{paragraph}{4}%
  {\z@}{1ex \@plus 1ex \@minus .2ex}{-.5em}%
  {\normalfont\normalsize\bfseries}%
}
\newtheorem{theorem}{Theorem}[section]
\newtheorem{lemma}[theorem]{Lemma}
\newtheorem{proposition}[theorem]{Proposition}
\newtheorem{corollary}[theorem]{Corollary}
\newtheorem{example}[theorem]{Example}
\newtheorem{definition}[theorem]{Definition}
\newtheorem{remark}[theorem]{Remark}
\newtheorem{conjecture}[theorem]{Conjecture}
\newcommand{\be}{\begin{equation}}
\newcommand{\ee}{\end{equation}}
\newcommand{\bthm}{\begin{theorem}}
\newcommand{\ethm}{\end{theorem}}
\newcommand{\blem}{\begin{lemma}}
\newcommand{\elem}{\end{lemma}}
\newcommand{\bpof}{\begin{proof}}
\newcommand{\epof}{\end{proof}}
\newcommand{\bcor}{\begin{corollary}}
\newcommand{\ecor}{\end{corollary}}
\newcommand{\bprop}{\begin{proposition}}
\newcommand{\eprop}{\end{proposition}}
\newcommand{\diag}{\mathrm{diag}}
\newcommand{\mc}[1]{\mathcal{#1}}
\newcommand{\mbb}[1]{\mathbb{#1}}
\newcommand{\msf}[1]{\mathsf{#1}}
\newcommand{\RR}{\mathbb{R}}
\newcommand{\NN}{\mathbb{N}}
\title{Limits of Weighted Graphs via Random Quotients}
\author{Eitan Levin$^\dag$ and Venkat Chandrasekaran$^{\dag,\ddag}$ \thanks{Email: \texttt{\{eitanl, venkatc\}@caltech.edu}} \vspace{0.25in} \\ $^\dag$ Department of Computing and Mathematical Sciences\\ $^\ddag$ Department of Electrical Engineering \\ California Institute of Technology \\ Pasadena, CA 91125}
\date{March 20, 2026}
\begin{document}

\maketitle

\begin{abstract}

We present a new notion of limits of weighted directed graphs of growing size based on convergence of their random quotients.  These limits are specified in terms of random exchangeable measures on the unit square.  We call our limits \emph{grapheurs} and show that these are dual to graphons in a precise sense. Grapheurs are well-suited to modeling global structure in large graphs such as hubs and connections between them; previous notions of graph limits based on subgraph densities fail to adequately model such global structure as subgraphs are inherently local.  Using our framework, we characterize properties of large graphs that are continuous with respect to our limits and present an edge-based sampling approach for testing them.  This method relies on an edge-based analog of the Szemer\'edi regularity lemma, whereby we show that sampling a constant number of edges from an arbitrarily-large graph approximately preserves its quotients. Finally, we observe that the random quotients of a graph are related to each other by equipartitions, and we conclude with a characterization of such random graph models.\\








\noindent \textbf{Keywords}: De Finetti, graphons, hubs, property testing, Szemer\'edi regularity

\end{abstract} 
\section{Introduction}\label{sec:intro}
Analyzing the structure of large graphs is a challenge that arises in numerous problem domains such as bioinformatics, network tomography, operations research, and the social sciences~\cite{newman_networks,network_tomography,social_network,JACKSON2002265,bassett2017network}. 
The graphs arising in these domains are, in general, weighted and directed. 
For example, in traffic networks vertices correspond to locations, and edge weights quantify the amount of traffic between two locations in each direction, see Figure~\ref{fig:gold_coast}. In functional brain connectome graphs, vertices correspond to regions of the brain, and edge weights quantify the connectivity between these regions in each direction, see Figure~\ref{fig:Celegans}.
A particularly important global structural aspect of such graphs is the presence of hub vertices and connections between them~\cite{OKELLY1998171,hub_and_spoke,power2013evidence,hubs_in_drugs,van2013network}.
For graphs with nonnegative edge weights and possible self-loops, a \emph{hub} is a vertex with a significant fraction of the total edge weight incident to it, such as a large city in a traffic network, a well-connected region of the brain, or a particularly interactive protein.  As a consequence of their high connectivity, hubs play a central role in the structure and function of large networks.  The structure of such large graphs has been studied using suitable notions of graph limits, most prominently graphons~\cite{lovasz2012large}; see Section~\ref{sec:related_work} for related work. However, these previous approaches are either limited to simple graphs, or summarize large graphs by their subgraph densities.  Subgraph densities fail to properly account for global structures such as hubs.  Indeed, a network can only contain a relatively small number of hubs, and therefore most subgraphs of a network do not contain any hubs.


Our central thesis is that hub structure in large graphs can be studied using \emph{quotients}.  Formally, a quotient of size $k$ of a graph on $n$ vertices is defined by an ordered partition $f_{k,n} : [n] \rightarrow [k]$, which partitions the vertex set $[n]$ into $k$ parts that are given by the fibers $f^{-1}_{k,n}(j)$ for $j \in [k]$.  In particular, for a graph on $n$ vertices with adjacency matrix $G \in \RR^{n \times n}_+$, its quotient by the ordered partition $f_{k,n}$ is given by the graph $\rho(f_{k,n}) G \in \RR^{k \times k}_+$ on $k$ vertices with edge weights:
\begin{equation}\label{eq:quotient_graph}
[\rho(f_{k,n})G]_{i,j} = \sum_{\substack{v\in f_{k,n}^{-1}(i)\\ u\in f_{k,n}^{-1}(j)}} G_{v,u}.
\end{equation}
Here we view $\rho(f_{k,n}) : \RR^{n \times n} \rightarrow \RR^{k \times k}$ as a `linear representation' of $f_{k,n}$.  In words, the vertices of the quotient $\rho(f_{k,n})G$ correspond to the parts in the partition defined by $f_{k,n}$, and the edge weight between parts $i,j\in[k]$ is given by the sum of all edge weights in $G$ between vertices in $f_{k,n}^{-1}(i)$ and $f_{k,n}^{-1}(j)$.  Thus, a quotient of $G$ represents the edge weights between each pair of parts in a vertex partition.

As an illustration, we give in Figures~\ref{fig:gold_coast}-\ref{fig:Celegans} examples of small quotients of two networks. The first, depicted in Figure~\ref{fig:gold_coast}, is a traffic network representing the fraction of trips starting and ending at each zone of the Gold Coast, Australia~\cite{traffic_networks}. 
The second, depicted in Figure~\ref{fig:Celegans}, is a brain connectome, the network of connections between neurons of an adult male \emph{C.~elegans} roundworm~\cite{cook2019whole,worms_brain}.
The structure of each quotient indicates some of the features of the underlying large network. As we will see in Section~\ref{sec:prop_test}, the presence of hubs in the original network leads to nonuniform distribution of edge weight in their random quotients.





\begin{figure*}[h!]
    \centering
    \begin{subfigure}[t]{0.45\textwidth}
        \centering
        \includegraphics[width=\linewidth]{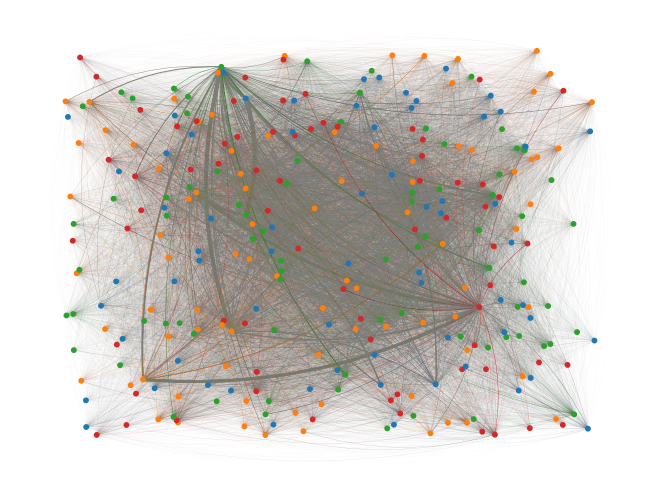}
    \end{subfigure}
    ~
    \begin{subfigure}[t]{0.45\textwidth}
        \centering
        \includegraphics[width=\linewidth]{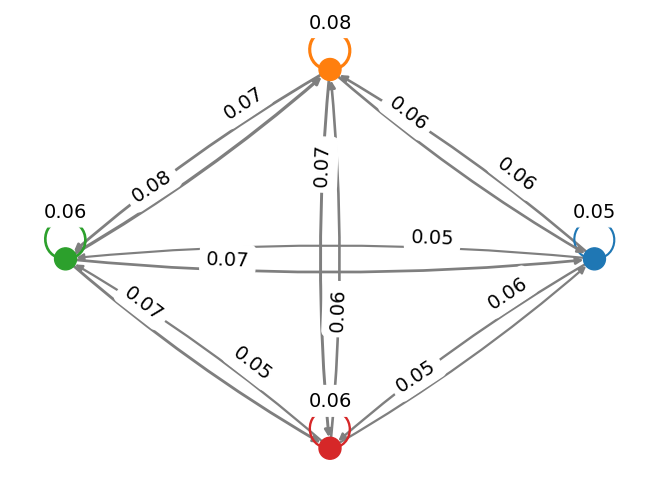}
    \end{subfigure}
    \caption{Traffic network for the Gold Coast, Australia, and a realization of a random quotient. The city is divided into 1068 zones, and the weight of an edge is the fraction of trips from a dataset with given start and end zones~\cite{traffic_networks}. Here we plot the first 300 of these zones and their interconnections, displaying the width of an edge proportionally to its weight. We color the vertices according to their image in the quotient, and color an edge connecting two vertices with the same image by the color of that image. The nonuniformity in the edge weights of the quotient indicates the presence of hubs that we can see in the plot of the entire network. 
    }\label{fig:gold_coast}
\end{figure*}


\begin{figure*}[h!]
    \centering
    \begin{subfigure}[t]{0.45\textwidth}
        \centering
        \includegraphics[width=\linewidth]{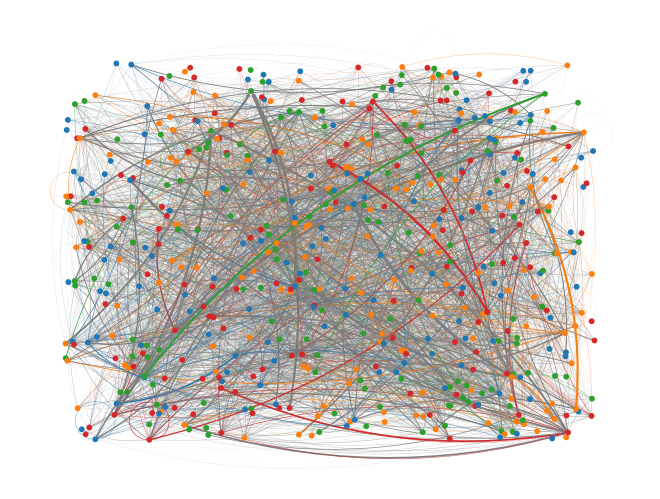}
    \end{subfigure}
    ~
    \begin{subfigure}[t]{0.45\textwidth}
        \centering
        \includegraphics[width=\linewidth]{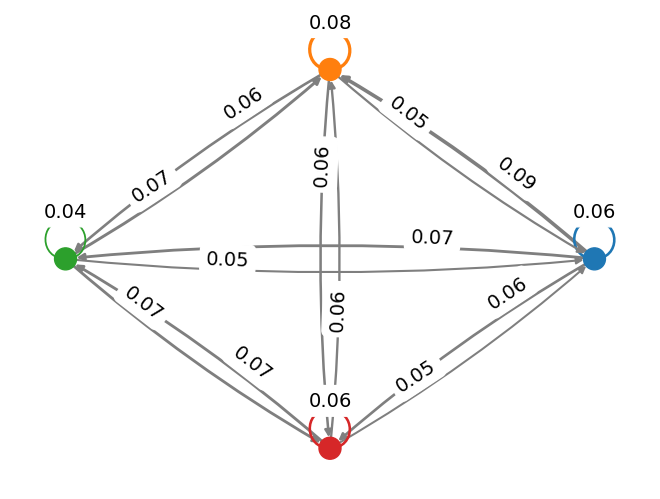}
    \end{subfigure}
    \caption{Network of chemical connections among the 575 neurons of adult male C.~elegans (a species of roundworm), and a realization of a  random quotient. Edges are directed from pre-synaptic to post-synaptic cells, and their weight is the total number of EM serial sections of connectivity~\cite{cook2019whole,worms_brain}. The nonuniform distribution of self-loops and the unequal in-degrees and out-degrees among the four vertices in the quotient again indicate the presence of hubs.}\label{fig:Celegans}
\end{figure*}


In this paper, we define a new notion of convergence for sequences of growing-sized graphs based on convergence of their random quotients.  In analogy with graphons~\cite[\S1]{lovasz2012large}, we define suitable limit objects in terms of random measures for any quotient-convergent sequence of graphs.  Finally, we show that finite (random) graphs sampled from our limit objects are quotient-convergent to the limit almost surely.  The key concepts in our paper are dual to those in the literature on graphons, and we emphasize this duality throughout our development. 
In particular, we shall see that random quotients and random subgraphs, in terms of which graphon convergence is defined, are random linear maps which are adjoints of each other.
Further, a dual analog of the cut metric plays a prominent role in our development, and we derive an edge-based analog of the Szemer\'edi regularity lemma as a consequence of our results.  We describe next these contributions in more detail.




\subsection{Our Contributions}
To set the stage, we define a random quotient of a graph on $n$ vertices in terms of a uniformly random map $F_{k,n}\colon [n]\to[k]$ sending each $i\in[n]$ independently to a uniformly-random element $F_{k,n}(i)\in [k]$. Equivalently, this is a uniformly random ordered partition of $[n]$ into $k$ parts, given by the sequence of (random) fibers $F_{k,n}^{-1}(j)$ for $j\in[k]$.  We denote by $\Delta^{n \times n}$ the set of $n \times n$ matrices with nonnegative entries that sum to one.

\begin{definition}\label{def:quotient_convergence}
A sequence of weighted graphs $(G_{n}\in\Delta^{n\times n})_{n=1}^{\infty}$ with edge weights that sum to one \emph{quotient-converges} if for each $k\in\NN$ the sequence of random graphs $(\rho(F_{k,n})G_{n}\in\Delta^{k\times k})_{n=1}^{\infty}$ converges weakly.  A general sequence of nonzero weighted graphs $(G_{n}\in \RR^{n\times n}_+)_n$ \emph{quotient-converges} if $(G_{n}/\mathbbm{1}^\top G_{n}\mathbbm{1} \in\Delta^{n\times n})_n$ quotient-converges.
\end{definition}

Normalizing our graphs to have unit total edge weight does not change the structure of its hubs, which were defined above in terms of the \emph{fraction} of edge weight incident on them. Unless specified otherwise, we will assume throughout that the total edge weight of our graphs equals one.

Denoting by $\rho(F_{n,k})^\star\colon\RR^{n\times n}\to\RR^{k\times k}$ the adjoint of $\rho(F_{n,k})$ with respect to the Frobenius inner product, note that $\rho(F_{n,k})^\star G_n$ is the induced subgraph on $k$ vertices sampled with replacement from the vertex set $[n]$ of $G_n$, in terms of which graphon convergence is defined~\cite[\S9]{lovasz2012large}. This is a first sense in which our theory is dual to that of graphons.
We remark that there is an alternative view of graphon limits via convergence of quotients~\cite[\S12]{lovasz2012large}, but these latter quotients differ from ours.  In particular, these latter quotients cannot detect hubs in a large graph, as we explain in Section~\ref{sec:related_work} below.



\subsubsection{Combinatorial Aspects of Quotient Convergence (Section~\ref{sec:hom_convergence})}
As our first contribution, we analyze combinatorial aspects of quotient convergence in Section~\ref{sec:hom_convergence}. Whereas previous notions of graph convergence were based on convergence of local ``motifs'' in the form of homomorphism densities (appropriately normalized), we show that our quotient convergence is equivalent to convergence of homomorphism \emph{numbers}.  More precisely, for a  multigraph $H\in\NN^{k\times k}$ possibly containing parallel edges and self-loops, and a general weighted graph $G\in\RR^{n\times n}$ on $n$ vertices, the homomorphism number and homomorphism density of $H$ in $G$ are given by~\cite[Eq.~(5.3)]{lovasz2012large}
\begin{equation}\label{eq:hom_nums}
    \mathrm{hom}(H;G) = \sum_{f\colon[k]\to[n]}\prod_{i,j=1}^k G_{f(i),f(j)}^{H_{i,j}},\qquad t(H;G)=\frac{1}{n^k}\mathrm{hom}(H;G).
\end{equation}
The terminology comes from the fact that when $H$ and $G$ are simple graphs, then $\mathrm{hom}(H;G)$ is the number of graph homomorphisms from $H$ to $G$ and $t(H;G)$ counts the average number of times $H$ occurs in a uniformly random subgraph of $G$.  In particular, convergence of the homomorphism densities $t(H;\cdot)$ is used to define limits of dense graphs~\cite{LOVASZ2006933,convergent_seqs1}.

In analogy, we define the \emph{quotient density} $t_Q(H;G)$ of a multigraph $H\in\NN^{k\times k}$ in a general weighted graph $G\in\RR^{n\times n}$ as the average number of times $H$ occurs in a uniformly random quotient of $G$:
\begin{equation}\label{eq:quotient_densities}
    t_Q(H;G) = \frac{1}{k^n}\sum_{f\colon[n]\to[k]}\prod_{i,j=1}^k(\rho(f)G)_{i,j}^{H_{i,j}}.
\end{equation}
The role of homomorphism densities in graphon theory is played by quotient densities in our framework.  We prove that a sequence of graphs is quotient-convergent if all of their quotient densities converge.
\begin{theorem}\label{thm:hom_num_conv}
For a sequence of graphs $(G_{n} \in \Delta^{n \times n})_n$, the following are equivalent:
\begin{enumerate}
    \item The sequence $(G_{n})$ is quotient-convergent.
    \item For each multigraph $H\in\NN^{k\times k}$, the sequence of quotient densities $(t_Q(H;G_{n}))_n$ converges.
    \item For each multigraph $H\in\NN^{k\times k}$, the sequence of homomorphism numbers $(\mathrm{hom}(H;G_{n}))_n$ converges.
\end{enumerate}
\end{theorem}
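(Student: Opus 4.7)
The plan is to prove (1)$\Leftrightarrow$(2) by the method of moments on $\rho(F_{k,n})G_n$, and (2)$\Leftrightarrow$(3) by showing $\{t_Q(H;\cdot)\}_H$ and $\{\mathrm{hom}(H;\cdot)\}_H$ are finite linear combinations of each other; the main obstacle will be the direction (2)$\Rightarrow$(3). For (1)$\Leftrightarrow$(2), since $G_n\in\Delta^{n\times n}$ the random matrix $\rho(F_{k,n})G_n$ is supported in the compact simplex $\Delta^{k\times k}\subseteq[0,1]^{k\times k}$. On a compact set, polynomials are uniformly dense in continuous functions by Stone--Weierstrass, so weak convergence of probability measures is equivalent to convergence of all monomial expectations; unwinding definitions, the monomial expectation of $\rho(F_{k,n})G_n$ with exponents $H\in\NN^{k\times k}$ is precisely $t_Q(H;G_n)$.

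For the direction (3)$\Rightarrow$(2), I would label the edges of $H$ as $e_1,\ldots,e_m$ with endpoints $(a_e,b_e)$, expand each factor $(\rho(f)G)_{i,j}^{H_{i,j}}$ as a sum, and average over a uniform $f\colon[n]\to[k]$. Grouping the configurations $(v_e,u_e)_e\in[n]^{2m}$ by the partition $\pi$ of the $2m$ slots $\{v_e,u_e\}_e$ induced by their vertex coincidences in $[n]$ produces the identity
$$t_Q(H;G) \;=\; \sum_{\pi\text{ consistent}} k^{-|\pi|}\,\mathrm{inj}(H_\pi;G),$$
where $\pi$ is \emph{consistent} when all slots in a single block share the same $[k]$-label, $H_\pi$ is the multigraph on vertex set $\pi$ with edges $([v_e]_\pi,[u_e]_\pi)$, and $\mathrm{inj}(H';G)=\sum_{\phi\colon V(H')\hookrightarrow[n]}\prod_e G_{\phi(\cdot),\phi(\cdot)}$ is the injective homomorphism count. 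Combined with the standard M\"obius relation between $\mathrm{inj}$ and $\mathrm{hom}$ on the partition lattice of $V(H')$, this expresses each $t_Q(H;G)$ as a finite $\ZZ$-linear combination of $\mathrm{hom}$ values, so convergence of all $\mathrm{hom}(H;G_n)$ implies convergence of all $t_Q(H;G_n)$.

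For the converse (2)$\Rightarrow$(3) I would exploit variation in $k$: embedding a fixed $H$ into $[k']$ with $k'\geq k$ by padding with isolated vertices leaves the underlying $\mathrm{inj}(H_\pi;G)$'s unchanged but rescales the coefficient of each layer as $(k')^{-|\pi|}$. Reading off the coefficient of $1/(k')^j$ from finitely many values of $k'$ then separates the layers $\{|\pi|=j\}$, and further varying $H$ lets one solve for each $\mathrm{inj}(\cdot;G)$ (and hence each $\mathrm{hom}(\cdot;G)$, by another round of M\"obius inversion) as a finite linear combination of quotient densities. The hard part will be carrying out this inversion cleanly; the most transparent route I foresee is a triangular induction on the number of vertices of $H_\pi$, with the leading $(k')^{-2m}$ term isolating $\mathrm{inj}$ of the edge-disjoint subdivision of $H$ and coarser partitions peeled off inductively.
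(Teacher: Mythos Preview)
Your (1)$\Leftrightarrow$(2) and (3)$\Rightarrow$(2) are correct and essentially match the paper: the first is Proposition~\ref{prop:quotient-part1} (method of moments on the compact simplex), and your partition expansion of $t_Q(H;G)$ is the paper's identity~\eqref{eq:tQ_in_inj} in different bookkeeping --- your consistent partitions $\pi$ of the $2m$ edge-endpoint slots enumerate (with multiplicity) the refinements $K\leq_R H$, with $H_\pi$ playing the role of $K$.

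For (2)$\Rightarrow$(3) the paper takes a shorter path than your Vandermonde detour. It simply observes that the matrix $M$ sending $(\mathrm{inj}(K;\cdot))_K$ to $(t_Q(H;\cdot))_H$, indexed by multigraphs with a fixed number of edges, is triangular in the refinement order $\leq_R$ with strictly positive diagonal, hence invertible over that same index set. This already expresses each $\mathrm{inj}(H;\cdot)$ as a finite linear combination of $t_Q(K;\cdot)$'s with each $K$ a multigraph without isolated vertices, and (2)$\Rightarrow$(3) follows in one stroke. Your own expansion contains this diagonal entry: the unique coarsest consistent partition groups slots by their $[k]$-label, has $|\pi|=|V(H)|$, and gives $H_\pi=H$.

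Your Vandermonde route, by contrast, pads $H$ with isolated vertices to vary $k'$; under the paper's convention multigraphs have no isolated vertices, so condition (2) does not directly hand you convergence of those padded $t_Q$'s. Closing that gap cleanly essentially forces you back to the triangular inversion above, at which point the Vandermonde becomes redundant. Separately, the $(k')^{-2m}$ term you single out isolates $\mathrm{inj}$ of the disjoint union of $m$ edges, which is the same for every $H$ with $m$ edges and so carries no information about $H$; the informative extreme is the opposite end $(k')^{-|V(H)|}$, which already equals a positive constant times $\mathrm{inj}(H;G)$.
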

The proof of this result is given in Section~\ref{sec:hom_convergence}, and is based on expressing quotient densities in terms of (injective) homomorphism numbers. This expression involves a partial order on the collection of multigraphs induced by the quotient operation.

For simple graphs $(G_n\in\mbb S^n_{\mathrm{sim}})_n$, i.e., undirected graphs with weights $(G_n)_{i,j}\in\{0,1\}$ and with no self-loops so $\diag(G_n)=0$, we conclude that $(G_n/2|E(G_n)|)_n$ is quotient-convergent if and only if $\left(\frac{\mathrm{hom}(H;G_n)}{|E(G_n)|^{|E(H)|}}\right)_n$ converges for all simple graphs $H\in\mbb S^k_{\mathrm{sim}}$ (see Corollary~\ref{cor:dense_graphs1}). Thus, quotient-convergence of simple graphs is equivalent to convergence of a homomorphism-density-like quantity, but one that is differently normalized than usual.  To the best of our knowledge, such a normalization based on the number of edges has not appeared previously in the literature, as normalizations based on the numbers of vertices $k$ and $n$ are used (see Section~\ref{sec:related_work}).  As a consequence, we will see below in our discussion of limit objects (Corollary~\ref{cor:undirected_graphs}) that quotient-convergent sequences of both dense and degree-bounded sequences of simple graphs converge to the same (uninteresting) limit.  In fact, quotient limits of simple graphs can only detect stars, which are vertices on which a positive fraction of edges are incident.  More broadly, hub structure in weighted and directed graphs can consist not only of (weighted and directed) stars, but also of individual edges carrying a significant fraction of the edge weight in the graph. Such structure is modeled by our limit objects for quotient-convergent sequences, which we proceed to describe in detail.

\subsubsection{Limit Objects for Quotient Convergence (Section~\ref{sec:limits})}

Our second contribution consists of constructing analytic limit objects for quotient-convergent sequences together with a notion of distance between them that allows us to compare graphs of different sizes, akin to graphons and the cut metric for dense graph sequences.  Note that appending isolated vertices to a graph $G \in \Delta^{n \times n}$ or relabeling its vertices does not change the distribution of its quotients $\rho(F_{k,n})G_n$ for each $k\in\NN$. Therefore, the limit object we associate to $G$ must be the same as that associated to any other graph differing from it by extra isolated vertices and vertex relabeling.  We describe our limit objects in terms of certain random exchangeable measures on the square, which we define next.

\begin{definition}[Random exchangeable measures] A \emph{random exchangeable probability measure} on $[0,1]^2$ is a random measure\footnote{We endow the space of probability measures on $[0,1]^2$ with the weak topology and consider Borel random variables on it.} $\msf M$ satisfying $\msf M\overset{d}{=}\msf M\circ(\sigma,\sigma)$ for all (Lebesgue-)measure-preserving bijections $\sigma\colon[0,1]\to[0,1]$. 
We say that a sequence of random measures $(\msf M_n)$ converges if it converges weakly viewed as a sequence of random variables taking values in the space of probability measures on $[0,1]^2$ (see Section~\ref{sec:notation} for details).
\end{definition}

Any finite graph $G\in\Delta^{n\times n}$ can be associated with the random exchangeable measure:
\begin{equation}\label{eq:step_measure}
    \msf M_G = \sum_{i,j=1}^nG_{i,j}\delta_{(T_i,T_j)},\quad \textrm{where } T_1,\ldots,T_n\overset{iid}{\sim}\mathrm{Unif}([0,1]).
\end{equation}
Note that relabeling the vertices of $G$ or appending isolated vertices to it yields the same random measure in this way.  More generally, Kallenberg showed that all random exchangeable measures on $[0,1]^2$ have the form~\cite{kallenberg1990exchangeable}:
\begin{equation}\label{eq:kallenberg_char}
    \msf M = \sum_{i,j\in\NN} E_{i,j}\delta_{(T_i,T_j)} + \sum_{i\in\NN}[\sigma_i(\delta_{T_i}\otimes\lambda) + \varsigma_i(\lambda\otimes \delta_{T_i})] + \theta\lambda^2+\vartheta\lambda_D,
\end{equation}
where $(i)$ $\lambda$ is the uniform measure on $[0,1]$,  $\lambda^2$ is the uniform measure on $[0,1]^2$, and $\lambda_D$ is the uniform measure on the diagonal $\{(x,x):x\in[0,1]\}$; $(ii)$ $T_1,T_2,\ldots\overset{iid}{\sim}\mathrm{Unif}([0,1])$; and $(iii)$ $E_{i,j},\sigma_i,\varsigma_i,\theta,\vartheta\geq0$ may potentially be random but must be independent of the $(T_i)$ and must satisfy $\sum_{i,j}E_{i,j}+\sum_i(\sigma_i+\varsigma_i)+\theta+\vartheta = 1$ almost surely.  Moreover, the collection of extremal random exchangeable measures, i.e., those that are not mixtures of other such random measures, are precisely the ones for which the coefficients $E,\sigma,\varsigma,\theta,\vartheta$ in~\eqref{eq:kallenberg_char} can be taken to be deterministic. 
We call such extremal random measures \emph{grapheurs}, and show that these are precisely limits of quotient-convergent graph sequences.

\begin{definition}\label{def:M0}
A \emph{grapheur} (graph edge measure) is an extremal random exchangeable measure, i.e., one of the form~\eqref{eq:kallenberg_char} with deterministic coefficients $(E,\sigma,\varsigma,\theta,\vartheta)$. 
We denote the collection of grapheurs by
\begin{equation}\label{eq:grapheur_space}
    \mc M = \left\{\msf M \textrm{ of the form~\eqref{eq:kallenberg_char}}: E\in \RR^{\NN\times \NN}_{\geq 0},~ \sigma,\varsigma\in \RR^{\NN}_{\geq0},~ \theta,\vartheta\in\RR_{\geq0} \textrm{ s.t. } \sum\nolimits_{i,j}E_{i,j} + \sum\nolimits_i(\sigma_i+\varsigma_i)+\theta+\vartheta=1\right\}.
\end{equation}
The space of grapheurs inherits the weak topology from the ambient space of random exchangeable measures.
\end{definition}

The coefficients $(E,\sigma,\varsigma,\theta,\vartheta)$ describing a fixed grapheur $\msf M\in\mc M$ are essentially unique. Specifically, note that for any permutation $\pi\colon\NN\to\NN$, the coefficients $(E,\sigma,\varsigma,\theta,\vartheta)$ and $(\pi E\pi^\top, \pi \sigma, \pi \varsigma, \theta,\vartheta)$ define the same random exchangeable distribution via~\eqref{eq:kallenberg_char}. Further, adding zero rows to $E$ and corresponding zero entries to $\sigma$, or zero columns to $E$ and corresponding zero entries to $\varsigma$, does not change the resulting random measure. Kallenberg showed in~\cite[Prop.~3]{kallenberg1990exchangeable} that these are the only possible ambiguities in the coefficients.
In particular, note that $\msf M_G\in\mc M$ is a grapheur for any finite graph $G$, and that $\msf M_G=\msf M_{G'}$ for two such graphs $G,G'$ if and only if they differ by isolated vertices and vertex relabeling.

In analogy with the cut metric between graphons, we endow $\mc M$ with the following metric between grapheurs that metrizes their convergence:
\begin{equation}\label{eq:dual_cut_metric}
    W_{\square}(\msf M_1,\msf M_2) = \inf_{\substack{\textrm{random } (\msf M_1',\msf M_2')\\ \msf M_i'\overset{d}{=}\msf M_i \textrm{ for } i=1,2} }\ \sup_{\substack{S,T\subseteq[0,1]\\ \textrm{intervals}}}\mbb E|\msf M_1'(S\times T)-\msf M_2'(S\times T)|,
\end{equation}
where the infimum is over couplings $(\msf M_1',\msf M_2')$ of the two random measures $\msf M_1$ and $\msf M_2$. Here $\msf M_i'\overset{d}{=}\msf M_i$ means that the distribution of $\msf M_i'$ is equal to that of $\msf M_i$, see Section~\ref{sec:notation} for more detail.
We show that grapheurs are the limits of quotient-convergent sequences of graphs in the following precise sense.
\begin{theorem}\label{thm:limits_formal}
A sequence of grapheurs $(\msf M_n)\subseteq\mc M$ converges weakly to a grapheur $\msf M\in\mc M$ if and only if $\lim_nW_{\square}(\msf M_n,\msf M)=0$. Furthermore, the following statements hold.
\begin{enumerate}
    \item A sequence of graphs $(G_n\in\Delta^{n\times n})$ is quotient-convergent if and only if there is a grapheur $\msf M\in\mc M$ satisfying $\lim_nW_{\square}(\msf M,\msf M_{G_n})=0$.

    \item The space of grapheurs $\mc M$ is compact. In particular, any sequence of graphs $(G_n\in\Delta^{n\times n})$ has a quotient-convergent subsequence.

    \item For every grapheur $\msf M\in\mc M$ there is a sequence $(G_n\in\Delta^{n\times n})$ satisfying $\lim_nW_{\square}(\msf M,\msf M_{G_n})=0$.
\end{enumerate}
\end{theorem}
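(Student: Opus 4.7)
My plan is to connect three equivalent notions: quotient convergence of graphs, weak convergence of associated grapheurs in $\mc M$, and convergence in $W_\square$. The linchpin is the identification that for any $G\in\Delta^{n\times n}$ and any $k\in\NN$, the random matrix $\rho(F_{k,n})G$ has the same joint distribution as $(\msf M_G(I_a^k\times I_b^k))_{a,b\in[k]}$, where $I_a^k:=[(a-1)/k,a/k)$. Indeed, letting $T_1,\ldots,T_n\overset{iid}{\sim}\mathrm{Unif}([0,1])$ define $\msf M_G$ via~\eqref{eq:step_measure}, the map $i\mapsto\lceil kT_i\rceil$ is uniform i.i.d.\ on $[k]$, and both sides equal $\sum_{i,j}G_{i,j}$ summed over the preimages. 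Consequently, quotient-convergence of $(G_n)$ is equivalent to joint convergence in distribution of the rectangle-mass matrices $(\msf M_{G_n}(I_a^k\times I_b^k))_{a,b}$ for every $k$, which, by a standard $\pi$-system argument applied to intervals (and passing from dyadic rectangles to all rectangles via Portmanteau), is equivalent to weak convergence of $(\msf M_{G_n})$ in $\mc M$.

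Next I would prove that $W_\square$ metrizes weak convergence on $\mc M$. For the easier direction, assuming $W_\square(\msf M_n,\msf M)\to0$, I pick couplings $(\msf M_n',\msf M_{(n)}')$ with $\msf M_n'\overset{d}{=}\msf M_n$ and $\msf M_{(n)}'\overset{d}{=}\msf M$ that nearly achieve the infimum. Then $\mbb E|\msf M_n'(S\times T)-\msf M_{(n)}'(S\times T)|\to 0$ uniformly over rectangles $S\times T$, which forces convergence in distribution of the finite-dimensional vectors $(\msf M_n(R_1),\ldots,\msf M_n(R_\ell))\to(\msf M(R_1),\ldots,\msf M(R_\ell))$ for any fixed rectangles $R_1,\ldots,R_\ell$, and hence weak convergence of the random measures. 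For the harder direction, I invoke Skorokhod's representation to realize $\msf M_n\to\msf M$ weakly almost surely on a common probability space. The subtlety is that exchangeability means $\msf M$ and each $\msf M_n$ admit many representations differing by measure-preserving bijections, and to control $W_\square$ one must align these: one seeks measure-preserving bijections $\sigma_n\colon[0,1]\to[0,1]$ such that $\msf M_n\circ(\sigma_n,\sigma_n)$ closely matches $\msf M$ on all rectangles. This alignment leverages the Kallenberg decomposition~\eqref{eq:kallenberg_char} and a dual Szemer\'edi-type approximation (foreshadowed in the abstract) that reduces the rectangle supremum to a finite partitional one.

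Given the metrization, Part~1 follows immediately by combining it with the identification above. For Part~2, each $\msf M_n$ takes values in the Polish space of probability measures on the compact set $[0,1]^2$, which is itself compact in the weak topology (tightness is automatic by Prokhorov's theorem). Hence $(\msf M_n)$ is tight as a sequence of random variables in a compact Polish space, so has a weakly convergent subsequence; the limit lies in $\mc M$ because the set of extremal exchangeable random measures is closed under weak convergence, via Kallenberg's characterization. For Part~3, given a grapheur $\msf M$ with coefficients $(E,\sigma,\varsigma,\theta,\vartheta)$, I construct approximating finite graphs by truncating $E$ to its leading $N\times N$ block, truncating $\sigma,\varsigma$ to their first $N$ coordinates, discretizing the continuous components $\theta\lambda^2$ and $\vartheta\lambda_D$ on a $1/N$-grid, and renormalizing to unit total mass; then $W_\square(\msf M_{G_N},\msf M)\to0$ by bounding each of the four approximation errors uniformly over rectangles.

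The main obstacle is the reverse direction of the metrization: weak convergence is defined against bounded continuous test functionals, while $W_\square$ controls a supremum over rectangles under an optimal coupling. Bridging this gap requires simultaneously realizing the weak limit almost surely (Skorokhod), exploiting exchangeability to align representations via measure-preserving bijections of $[0,1]$, and upgrading the almost-sure weak limit to a uniform-on-rectangles limit via a regularity-type approximation. Each ingredient is standard in isolation, but assembling them into a coupling that witnesses the infimum in~\eqref{eq:dual_cut_metric} is the central technical issue of the proof.
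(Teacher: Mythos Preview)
Your outline is broadly correct, and Parts~1--3 are handled much as the paper does: the identification $\rho(F_{k,n})G\overset{d}{=}\msf G_k[\msf M_G]$ is exactly Proposition~\ref{prop:induced_random_graph}, compactness of $\mc M$ yields Part~2, and your truncation-and-discretization construction for Part~3 is essentially the explicit sequence~\eqref{eq:particular_sequence}. One point you gloss over is that closedness of $\mc M$ (the extremal exchangeable measures) is not immediate from Kallenberg's decomposition; the paper proves it via the ergodicity characterization of extremal points and a $0$--$1$ argument (Lemma~\ref{lem:weak_conv_measure_01_sets}).

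The substantive divergence is in the ``hard direction'' of the metrization. You propose Skorokhod representation followed by an alignment step via measure-preserving bijections $\sigma_n$, and you correctly flag this as the main obstacle. But this is both harder than necessary and slightly misframed: the infimum in~\eqref{eq:dual_cut_metric} is over probabilistic couplings of the random measures, not over bijections of $[0,1]$, so ``aligning representations'' is not quite the right mechanism. The paper avoids alignment entirely. It compares $W_\square$ directly with the Wasserstein-1 metric $\mathrm{dist}_{W_1}$ on $\mc P(\mc P([0,1]^2))$ (which metrizes weak convergence automatically): take the $W_1$-optimal coupling $(\msf M_n',\msf M')$, approximate each rectangle indicator $\mathbbm 1_{S\times T}$ by an $\epsilon^{-1}$-Lipschitz function $f_\epsilon$ supported on an $\epsilon$-thickening, and bound the approximation error $\mbb E|\msf M'(S\times T)-\mbb E_{\msf M'}f_\epsilon|$ using the elementary fact (Lemma~\ref{lem:means}) that $\mbb E\msf M=\theta\lambda^2+(1-\theta)\lambda_D$ for any grapheur, so the thickened boundary has mass $O(\epsilon)$ uniformly. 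This yields $W_\square(\msf M_n,\msf M)\leq 8\epsilon+\epsilon^{-1}\mathrm{dist}_{W_1}(\msf M_n,\msf M)$, and no Szemer\'edi-type input or bijection alignment is needed. Your Skorokhod route could likely be completed, but it requires handling atoms on rectangle boundaries sample-by-sample, which is exactly the difficulty the Lipschitz-smoothing trick sidesteps.
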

In view of Theorem~\ref{thm:limits_formal}, we say that a sequence of graphs $(G_n)$ converges to a grapheur $\msf M\in\mc M$ if $\lim_n\msf M_{G_n}=\msf M$ in $\mc M$. 
Theorem~\ref{thm:limits_formal} states that any quotient-convergent sequence of graphs converges to a grapheur, and conversely each grapheur is the limit of a quotient-convergent sequence of graphs.
Moreover, this theorem shows that the space of grapheurs endowed with the weak topology is compact, and that this compact topology is metrized by our $W_{\square}$ metric~\eqref{eq:dual_cut_metric}. 

To prove Theorem~\ref{thm:limits_formal}, we relate random quotients of finite graphs to their associated grapheurs~\eqref{eq:step_measure}. 
For each grapheur $\msf M\in\mc M$ and $k\in\NN$, define the random graph $\msf G_k[\msf M]\in\Delta^{k\times k}$ whose entries are given by:
\begin{equation} \label{eq:graph_from_measure}
    (\msf G_k[\msf M])_{i,j} = \msf M(I_i^{(n)}\times I_j^{(n)})
\end{equation} 
for $i,j\in[n]$, where $I_i^{(n)}=[(i-1)/n,i/n)$ for $i\in[n]$.\footnote{Note that $\sum_{i,j}(\msf G_k[\msf M])_{i,j}=\msf M([0,1)^2)=1$ almost surely by~\eqref{eq:kallenberg_char}.}
In words, we partition the square $[0,1]^2$ into a $k\times k$ uniform grid, and let the entries of $\msf G_k[\msf M]$ be the (random) measure under $\msf M$ of each square in this grid. We show in Section~\ref{sec:characterizing_lims} that $\rho(F_{k,n})G\overset{d}{=}\msf G_k[\msf M_{G}]$ for any finite graph $G\in\Delta^{n\times n}$, and that a sequence of graphs $(G_n)$ converges to the grapheur $\msf M$ precisely when the random quotients $(\rho(F_{k,n})G_n)_k$ converge weakly to $\msf G_k[\msf M]$ for each $k\in\NN$. Similarly, we can extend quotient densities to grapheurs by setting
\begin{equation}\label{eq:grapheur_density}
    t_Q(H;\msf M) = \mbb E\prod_{i,j=1}^k(\msf G_k[\msf M])_{i,j}^{H_{i,j}} = \int\prod_{i,j=1}^k\mu(I_i^{(k)}\times I_j^{(k)})^{H_{i,j}}\, d\msf M(\mu),
\end{equation}
for any multigraph $H\in\mbb N^{k\times k}$, analogously to the extension of homomorphism densities to graphons~\cite[\S7.2]{lovasz2012large}. In terms of quotient densities, the sequence $(G_n)$ converges to the grapheur $\msf M$ if and only if $t_Q(H;G_n)\to t_Q(H;\msf M)$ for all multigraphs $H$, see Section~\ref{sec:characterizing_lims}. 


Having characterized our limit objects, we interpret in Section~\ref{sec:structure_of_lims} each of the parameters $E, \sigma, \varsigma, \theta, \vartheta$ describing a grapheur~\eqref{eq:kallenberg_char} in terms of certain features of the large graphs converging to it. Informally, the array $E\in\RR_{\geq 0}^{\NN\times \NN}$ specifies the connections between a discrete collection of hubs, with each entry corresponding to an edge with an asymptotically positive fraction of the total edge weight. The arrays $\sigma,\varsigma\in\RR_{\geq 0}^{\NN}$ specify directed stars centered at hubs, with each entry corresponding to the fraction of out-degree or in-degree of a hub from all the edges incident to it with asymptotically vanishing edge weights. Lastly, the parameters $\theta$ and $\vartheta$ specify the remaining edge weight that comes from all edges between vertices with asymptotically vanishing degrees and their self-loops. In particular, we say that a grapheur $\msf M$ does not have hubs if $E=0$ and $\sigma=\varsigma=0$, and we say that a sequence $(G_n)$ of graphs does not contain hubs asymptotically if it quotient-converges to such a grapheur.

Finally, we expand on the duality between graphons and grapheurs. We have seen previously that the sampling operators used to define each notion of graph limit, namely, those extracting random subgraphs and random quotients, respectively, are duals of each other. We further show in Section~\ref{sec:dual_graphons} that the space of graphons endowed with the cut metric is dual to the space $\mc M$ endowed with the $W_{\square}$ metric~\eqref{eq:dual_cut_metric} as follows. 
\begin{theorem}[Informal; see Theorem~\ref{thm:graphon_duality}]\label{thm:graphon_duality_intro}
    There is a random pairing $\langle \msf M,W\rangle$ associating a scalar random variable to each grapheur $\msf M$ and graphon $W$ satisfying the following properties. We have $\lim_n\msf M_n=\msf M$ in the $W_{\square}$ metric if and only if $\langle \msf M_n, W_G\rangle\to\langle\msf M,W_G\rangle$ weakly for each step graphon $W_G$ associated to a finite simple graph $G$.
    We also have $\lim_nW_n\to W$ in the cut metric if and only if $\langle \msf M_G,W_n\rangle\to\langle\msf M_G,W\rangle$ weakly for each grapheur $\msf M_G$ associated to a finite undirected graph $G$ with unit total edge weight and no self-loops.
\end{theorem}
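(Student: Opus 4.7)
The natural pairing to propose is
\[\langle \msf M, W\rangle := \int_{[0,1]^2} W(x,y)\, d\msf M(x,y),\]
a $[0,1]$-valued random variable whose randomness comes entirely from $\msf M$. Two specializations drive the argument. For a step graphon $W_G$ associated to a simple graph $G$ on $k$ vertices, $\langle \msf M, W_G\rangle = \sum_{i,j} G_{i,j}\,(\msf G_k[\msf M])_{i,j}$ is a linear functional of the random quotient matrix~\eqref{eq:graph_from_measure}; for the grapheur $\msf M_G$ associated to a finite graph $G$ on $n$ vertices with iid uniform labels $T_i$, $\langle \msf M_G, W\rangle = \sum_{i,j} G_{i,j}\,W(T_i, T_j)$ is a random integral of $W$ against the sampled vertex labels. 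These two formulas make the quotient-subgraph duality concrete at the pairing level.

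For property 1 the plan is to leverage the characterization (Theorem~\ref{thm:limits_formal} and Section~\ref{sec:characterizing_lims}) that $\msf M_n \to \msf M$ in $W_{\square}$ if and only if $\msf G_k[\msf M_n] \to \msf G_k[\msf M]$ weakly for each $k\in\NN$. The forward direction is then immediate from the continuous mapping theorem applied to the bounded linear functional $M \mapsto \langle G, M\rangle$. For the converse, I would expand the $m$-th moment of the bounded variable $\langle \msf M, W_G\rangle\in[0,1]$ into a positive combination of quotient densities $t_Q(H;\msf M)$ from~\eqref{eq:grapheur_density}, where $H$ ranges over multigraphs whose edge support is contained in $E(G)$. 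Bounded moment convergence, which follows from weak convergence of the pairings, then recovers $t_Q(H;\msf M_n) \to t_Q(H;\msf M)$ for every loopless multigraph $H$; the self-loop densities are then obtained by mass preservation and scale refinement (see below), and the conclusion follows from Theorem~\ref{thm:hom_num_conv}.

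For property 2 I would use a parallel moment computation: expanding
\[\mbb E\bigl[\langle \msf M_G, W\rangle^m\bigr] = \sum_{(i_1,j_1),\ldots,(i_m,j_m)} G_{i_1,j_1}\cdots G_{i_m,j_m}\, t(H;W),\]
where $H$ is the multigraph encoding the coincidence pattern of the indices, exhibits each moment as a positive combination of homomorphism densities. Since cut-metric convergence $W_n\to W$ is equivalent to convergence of all homomorphism densities $t(H;W_n)\to t(H;W)$, every moment of $\langle \msf M_G, W_n\rangle$ converges; bounded moment convergence of these $[0,1]$-valued variables then yields weak convergence. Conversely, each homomorphism density $t(H;W)$ arises as an identifiable top-order summand in a moment of $\langle \msf M_G, W\rangle$ for $G$ taken proportional to the adjacency matrix of $H$, so weak convergence of the pairings propagates back through an induction on the structure of $H$ to convergence of all homomorphism densities and hence to cut-metric convergence.

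The main obstacle will be the converse direction of property 1: step graphons of simple graphs furnish only symmetric, zero-diagonal, $\{0,1\}$-valued linear functionals of $\msf G_k[\msf M]$, which access quotient densities $t_Q(H;\msf M)$ only for loopless multigraphs $H$. The plan for closing this gap is a scale-refinement argument: at scale $2k$, each diagonal block $(i,i)$ of the $k$-grid decomposes into four sub-blocks of which the two antidiagonal ones are off-diagonal at scale $2k$ and hence accessible via simple-graph step graphons; combined with the mass-preservation identity $\sum_{i,j}(\msf G_k[\msf M])_{i,j}=1$, this allows the self-loop densities to be reconstructed iteratively as $k$ grows.
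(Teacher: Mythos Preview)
Your pairing $\langle \msf M,W\rangle=\int W\,d\msf M$ does \emph{not} have property~2, and this is a genuine gap rather than a detail. Take $G$ a single edge, so $\msf M_G=\delta_{(T_1,T_2)}$ and your pairing gives $\langle\msf M_G,W\rangle=W(T_1,T_2)$. Let $W_n$ be the step graphon of an Erd\H{o}s--R\'enyi $G(n,1/2)$ graph and $W\equiv 1/2$; then $W_n\to W$ in cut metric, yet $W_n(T_1,T_2)$ is (approximately) a $\mathrm{Ber}(1/2)$ random variable for every $n$ while $W(T_1,T_2)=1/2$ deterministically, so the pairings do not converge weakly. The same failure shows up in your moment expansion: the terms $t(H;W)$ that appear involve multigraphs $H$ with parallel edges (e.g.\ $\int W^2$ for $m=2$), and these densities are \emph{not} continuous in the cut metric, so you cannot deduce moment convergence from $W_n\to W$.

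The paper's fix is to insert a Bernoulli layer: $\langle\msf M,W\rangle$ is defined using $\msf B_{T_i,T_j}^{(W)}\sim\mathrm{Ber}(W(T_i,T_j))$, so that for finite $G$ one has $\langle\msf M_G,W\rangle=\langle G,\mathbb G_k[W]\rangle$, a linear functional of the \emph{sampled simple graph} $\mathbb G_k[W]$. Graphon convergence is exactly weak convergence of $\mathbb G_k[W_n]$, so both directions of property~2 reduce to Cram\'er--Wold; no moment expansion or multigraph densities are needed. On step graphons $W_G$ the Bernoulli is deterministic, so this pairing agrees with yours there and your treatment of property~1 remains viable.

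For the converse in property~1, your moment-and-scale-refinement route is workable in principle but unnecessarily intricate. The paper bypasses quotient densities entirely: linear combinations of $W_G$ over simple $G$ on $k$ vertices give all step functions on the $k\times k$ grid that vanish on the diagonal, and since the theorem is stated for $\msf M\in\mc M_{\mathrm{sym}}$ one has $\mbb E\msf M=\lambda^2$, so the diagonal blocks carry total expected mass $1/k$. Hence these zero-diagonal step functions already approximate any continuous $f$ in $L^1(\mbb E\msf M')$ uniformly over $\msf M'\in\mc M_{\mathrm{sym}}$, and weak convergence of $\mbb E_{\msf M_n}f$ follows without any inversion or refinement argument.
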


\subsubsection{Edge Sampling and Property Testing (Section~\ref{sec:sampling})}


As our next contribution, we apply our framework to test properties of large graphs based on small summaries of them.  
Specifically, we focus on testing properties given by graph parameters that are continuous with respect to quotient convergence, including quotient densities, homomorphism numbers, clustering and centrality coefficients, see Section~\ref{sec:prop_test}.
While it would be natural to test such properties using random quotients, forming these requires access to the entire large graph, which limits their utility in practice. 
Motivated by this challenge, we develop an edge-based sampling perspective on quotient-convergence in Section~\ref{sec:sampling_edges}.   
Specifically, we sample edges from a graph proportionally to their edge weights to form random graphs $\msf G^{(n)}$ with $n$ edges, and extend this sampling procedure to any grapheur $\msf M \in \mc M$.  We detail the general sampling procedure in Section~\ref{sec:sampling_edges}. 

We prove the following convergence results for the sequence of random graphs $(\msf G^{(n)})$ sampled from a grapheur $\msf M$ back to $\msf M$, which are edge-based analogs of the second sampling lemma and Szemer\'edi's regularity lemma for graphons~\cite[Lemma~10.16, Lemma~9.15]{lovasz2012large}. Here and throughout the paper, a superscript $G^{(n)}$ denotes a graph on $n$ edges (and hence at most $2n$ vertices) while a subscript $G_n$ denotes a graph on $n$ vertices.

\begin{theorem}\label{thm:sampling_intro}
Fix any grapheur $\msf M\in \mc M$.
\begin{enumerate}[align=left, font=\emph]
        \item[(Edge sampling lemma)] For each $n\in\NN$, let $\msf G^{(n)}$ be the random graph containing $n$ edges sampled from $\msf M$ as in Section~\ref{sec:sampling}. With probability at least $1-e^{-2\epsilon^2}$, we have that $W_{\square}(\msf M_{\msf G^{(n)}}, \msf M)\leq \frac{174+\epsilon}{\sqrt{n}}$. 

        \item[(Edge-based Szemer\'edi regularity)] For any $\epsilon>0$, there exists a graph $G^{(k(\epsilon))}$ on $k(\epsilon)=\lceil (174/\epsilon)^2\rceil$ edges satisfying $W_{\square}(\msf M,\msf M_{G^{(k(\epsilon))}})\leq \epsilon$.
    \end{enumerate}
\end{theorem}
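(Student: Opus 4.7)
The plan is to establish the edge sampling lemma first, from which the edge-based Szemer\'edi regularity statement will follow by the probabilistic method together with the compactness of $\mc M$ granted by Theorem~\ref{thm:limits_formal}. Indeed, the sampling lemma gives $\mbb P(W_\square(\msf M_{\msf G^{(n)}}, \msf M) \leq (174+\epsilon')/\sqrt{n}) > 0$ for every $\epsilon' > 0$. Setting $n = \lceil (174/\epsilon)^2 \rceil$ so that $174/\sqrt{n} \leq \epsilon$, and letting $\epsilon' \to 0^+$, we obtain a sequence of realizations $G^{(n)}_{\epsilon'}$ with $W_\square(\msf M, \msf M_{G^{(n)}_{\epsilon'}}) \leq (174+\epsilon')/\sqrt{n}$; passing to a subsequential limit using the compactness of $\mc M$ yields the required graph on $k(\epsilon) = n$ edges satisfying $W_\square \leq \epsilon$.

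For the edge sampling lemma, my plan rests on a coupling argument. Given $\msf M$, let $(X_i, Y_i)_{i=1}^n$ be the iid samples from $\msf M$ defining $\msf G^{(n)}$ as in Section~\ref{sec:sampling}, and let $\hat{\msf M}_n = \frac{1}{n}\sum_i \delta_{(X_i, Y_i)}$ be the associated empirical measure. The crucial auxiliary claim is that $\hat{\msf M}_n \overset{d}{=} \msf M_{\msf G^{(n)}}$: using Kallenberg's classification~\eqref{eq:kallenberg_char} and its essential uniqueness up to measure-preserving bijections and zero-padding, one checks that the fresh-uniform vertex relabeling appearing in the definition of $\msf M_{\msf G^{(n)}}$ produces the same distribution as directly using the sample positions $(X_i, Y_i)$, after an appropriate measure-preserving bijection of $[0,1]$ aligning the atoms. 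This yields a valid coupling of $\msf M_{\msf G^{(n)}}$ with $\msf M$, and hence
\begin{equation*}
    W_\square(\msf M_{\msf G^{(n)}}, \msf M) \leq \sup_{S, T}\mbb E|\hat{\msf M}_n(S\times T) - \msf M(S\times T)| \leq \mbb E\sup_{S, T}|\hat{\msf M}_n(S\times T) - \msf M(S\times T)|,
\end{equation*}
with the suprema taken over intervals $S, T \subseteq [0,1]$.

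The right-hand side is a uniform empirical process discrepancy over the class $\mc R = \{S \times T : S, T \subseteq [0,1]\text{ intervals}\}$ of axis-aligned rectangles, which has VC dimension $4$. Applying a sharp VC-based uniform concentration inequality (Massart's form of the VC--Sauer--Shelah bound) conditionally on $\msf M$ gives $\mbb E[\sup_{A \in \mc R}|\hat{\msf M}_n(A) - \msf M(A)| \mid \msf M] \leq C/\sqrt{n}$ for an explicit universal constant $C$, uniformly in the conditioning because the VC bound does not depend on the underlying distribution. To upgrade this in-expectation bound to a high-probability bound, I would invoke McDiarmid's bounded differences inequality: replacing one sampled pair changes the supremum by at most $2/n$, so conditional on $\msf M$ the supremum lies within $\epsilon/\sqrt{n}$ of its conditional mean with probability at least $1 - e^{-2\epsilon^2}$. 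Taking expectation over $\msf M$ and combining with the coupling inequality yields $W_\square(\msf M_{\msf G^{(n)}}, \msf M) \leq (C + \epsilon)/\sqrt{n}$ with the stated probability; a careful accounting of the VC and McDiarmid constants yields $C \leq 174$.

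The main obstacle is the coupling claim $\hat{\msf M}_n \overset{d}{=} \msf M_{\msf G^{(n)}}$, which requires a careful case analysis on the components of $\msf M$ in the Kallenberg representation~\eqref{eq:kallenberg_char}: the discrete hub contributions encoded by $E$, $\sigma$, $\varsigma$ cause sample collisions that must be matched to vertex identifications in $\msf G^{(n)}$, while the diffuse contributions encoded by $\theta$, $\vartheta$ produce a.s.\ distinct sample positions. A secondary challenge lies in pinning down the explicit constant $174$, which depends on the sharp constants in the VC-based empirical-process bound together with McDiarmid's inequality and likely requires optimization over a free parameter balancing the mean bound against the deviation tail.
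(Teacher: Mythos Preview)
Your overall strategy---couple $\msf M_{\msf G^{(n)}}$ with the empirical measure $\hat{\msf M}_n$, bound the uniform discrepancy over axis-aligned rectangles via their VC dimension, then concentrate via McDiarmid---is exactly the paper's. Two points of imprecision are worth flagging.

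First, the displayed inequality $W_\square(\msf M_{\msf G^{(n)}}, \msf M) \leq \mbb E\sup_{S,T}|\hat{\msf M}_n(S\times T) - \msf M(S\times T)|$ does not typecheck as written: the left side is a random variable in $\msf G^{(n)}$, while the right side is a single number. The repair (this is the content of the paper's Lemma~\ref{lem:empirical_coupling}) is that, \emph{conditional on} $\msf G^{(n)} = G$, the pair $(\hat{\msf M}_n, \msf M)$ still furnishes a valid coupling of $(\msf M_G, \msf M)$, because $\msf G^{(n)}$ records only the multiplicity pattern of the sampled atoms and is independent of their uniform locations by Kallenberg's structure theorem. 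With this in hand, McDiarmid is applied to $G \mapsto \overline{W_\square}(\msf M_G, \msf M)$ as a function of the $n$ iid samples; the bounded difference is $1/n$, not $2/n$ (swapping one sample relocates a single atom of mass $1/n$, so any rectangle's measure moves by at most $1/n$), and $1/n$ is precisely what delivers the tail $e^{-2\epsilon^2}$.

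Second, the regularity statement follows more directly from the expectation bound $\mbb E_{\msf G^{(n)}} W_\square(\msf M_{\msf G^{(n)}}, \msf M) \leq 174/\sqrt{n}$, a byproduct of your VC step: some realization achieves this, so at $n = \lceil (174/\epsilon)^2\rceil$ one has $W_\square \leq 174/\sqrt{n} \leq \epsilon$. Your compactness-plus-$\epsilon'\to 0$ route also works, but you would need to add that the grapheurs arising from $n$-edge sampled graphs form a finite (hence closed) subset of $\mc M$, so that the subsequential limit is again of the required form.
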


Theorem~\ref{thm:sampling_intro} allows us to test properties of large graphs by sampling a few of their edges at random. We formally define the graph parameters that can be tested using our framework in Section~\ref{sec:prop_test}. These consist of graph parameters that are continuous with respect to quotient convergence, and include all polynomial graph parameters that do not change when appending isolated vertices to their inputs (Proposition~\ref{prop:polys_are_testable}). 
In particular, the graph parameter $f(G)=\|G\mathbbm{1}\|_2^2+\|G^\top\mathbbm{1}\|_2^2$ is quotient-testable in our framework, and can be used to test for the presence of hubs in a growing sequence of graphs. Specifically, if $(G_n)$ is a quotient-convergent sequence, we show in Example~\ref{ex:hub_test} that $f(G_n)\to 0$ precisely when $(G_n)$ does not have hubs asymptotically, i.e., it converges to a limiting grapheur~\eqref{eq:kallenberg_char} with $E=0$ and $\sigma=\varsigma=0$. We then describe an edge-based sampling procedure for testing this graph parameter. In contrast, this parameter cannot be tested in the graphons framework; see Section~\ref{sec:prop_test}.

\subsubsection{Equipartition-Consistent Random Graph Models (Section~\ref{sec:eqp_models})}\label{sec:eqp_intro}

As our final contribution, we observe that random quotients $(\msf G_k[\msf M])_k$ of any grapheur $\msf M$ are related to each other by equipartitions of their vertex sets. 
Specifically, we observe that $\rho(d_{k,nk})\msf G_{nk}[\msf M]\overset{d}{=}\msf G_k[\msf M]$ for any equipartition $d_{k,nk}\colon[nk]\to[k]$, i.e., a map with equal-sized fibers. 
We call such sequences of random graphs \emph{equipartition-consistent random graph models}, which play a role in our framework analogous to that played by consistent random graph models in graphon theory~\cite{lovasz2012random}. 
\begin{definition}[Equipartition-consistent models]\label{def:eqp_models}
    A sequence of probability measures $(\mu_k\in\mc P(\Delta^{k\times k}))_{k\in\NN}$ is an \emph{equipartition-consistent random graph model} if $\rho(d_{k,nk})\msf G_{nk}\overset{d}{=}\msf G_k$ whenever $\msf G_{nk}\sim\mu_{nk}$ and $\msf G_k\sim\mu_k$ and for any equipartition $d_{k,nk}\colon[nk]\to[n]$. 
\end{definition}
Further, we show that any equipartition-consistent random graph model is derived from a mixture of grapheurs. 
\begin{theorem}\label{thm:eqp_models}
    For any equipartition-consistent random graph model $(\mu_k\in\mc P(\Delta^{k\times k}))_k$, there is a unique random exchangeable measure $\msf M$ on $[0,1]^2$ satisfying $\mu_k=\mathrm{Law}(\msf G_k[\msf M])$ for all $k\in\NN$. 
\end{theorem}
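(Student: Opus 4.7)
The plan is to prove uniqueness and existence separately, viewing equipartition-consistency as a projective-system compatibility condition that determines a single random measure on $[0,1]^2$.

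\textbf{Uniqueness.} Suppose $\msf M$ and $\msf M'$ both satisfy $\mathrm{Law}(\msf G_k[\msf M])=\mathrm{Law}(\msf G_k[\msf M'])=\mu_k$ for each $k$. For any finite collection $k_1,\dots,k_p$ with $k^\star=\mathrm{lcm}(k_1,\dots,k_p)$, additivity of any measure on grid rectangles gives $\msf G_{k_j}[\msf M]=\rho(d_{k_j,k^\star})\msf G_{k^\star}[\msf M]$ almost surely. Hence the joint law of the grid process $(\msf G_{k_1}[\msf M],\dots,\msf G_{k_p}[\msf M])$ is a deterministic image of $\mu_{k^\star}$, identically for $\msf M$ and $\msf M'$. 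Since the grid rectangles $\{I_i^{(k)}\times I_j^{(k)}\}_{k,i,j}$ form a $\pi$-system generating the Borel $\sigma$-algebra of $[0,1]^2$, the law of a random Borel probability measure is determined by the joint law of its values on them, giving $\msf M\overset{d}{=}\msf M'$.

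\textbf{Existence.} Take $k_\ell=\ell!$. Equipartition consistency with $n=1$ shows each $\mu_k$ is invariant under permutations of $[k]$, since equipartitions $[k]\to[k]$ are exactly bijections. The continuous maps $\rho(d_{k_\ell,k_{\ell+1}})\colon\Delta^{k_{\ell+1}\times k_{\ell+1}}\to\Delta^{k_\ell\times k_\ell}$ push $\mu_{k_{\ell+1}}$ onto $\mu_{k_\ell}$ by equipartition consistency, so Kolmogorov's extension theorem on the Polish spaces $\Delta^{k_\ell\times k_\ell}$ produces a joint coupling $(G_{k_\ell})_\ell$ with $G_{k_\ell}\sim\mu_{k_\ell}$ and $\rho(d_{k_\ell,k_{\ell+1}})G_{k_{\ell+1}}=G_{k_\ell}$ almost surely. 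Declare $\msf M(I_i^{(k_\ell)}\times I_j^{(k_\ell)}):=(G_{k_\ell})_{i,j}$; almost sure aggregation makes $\msf M$ finitely additive and consistent across scales on the algebra of grid rectangles. Carathéodory's extension theorem (with countable additivity following from the standard compactness argument on $[0,1]^2$) produces a random Borel probability measure. For any $k$, choosing $\ell$ with $k\mid k_\ell$ yields $\msf G_k[\msf M]=\rho(d_{k,k_\ell})G_{k_\ell}\sim\mu_k$ by equipartition consistency.

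\textbf{Exchangeability.} It remains to verify $\msf M\overset{d}{=}\msf M\circ(\sigma,\sigma)$ for every measure-preserving bijection $\sigma$ of $[0,1]$. Any permutation $\pi$ of $[k_\ell]$ induces a measure-preserving ``block exchange'' $\phi_\pi$ that permutes the intervals $I_i^{(k_\ell)}$, and its canonical lift to $[k_{\ell'}]$ for $\ell'\geq\ell$ is equivariant with respect to aggregation. Exchangeability of each $\mu_{k_{\ell'}}$ together with the projective structure of the coupling implies the joint law of $(G_{k_{\ell'}})_{\ell'\geq\ell}$ is invariant under these lifts, so $\msf M\overset{d}{=}\msf M\circ(\phi_\pi,\phi_\pi)$. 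The block exchanges across all scales $\ell$ form a dense subgroup of the Polish group of Lebesgue-measure-preserving bijections of $[0,1]$; approximating a general $\sigma$ by block exchanges at finer and finer scales and invoking that $\msf M\circ(\sigma,\sigma)$ is determined by its values on grid rectangles (as in the uniqueness step), which in turn depend on $\sigma$ only through the approximable partition $\{\sigma^{-1}(I_i^{(k)})\}_i$, upgrades grid-permutation invariance to $\msf M\overset{d}{=}\msf M\circ(\sigma,\sigma)$.

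\textbf{Main obstacle.} I expect the exchangeability step to be the most delicate: the combinatorial invariance under grid permutations is essentially immediate from equipartition consistency, but lifting to analytic invariance under all measure-preserving bijections requires both the density of block exchanges in the measure-preserving group and a careful continuity argument for the joint grid distribution as the approximating $\sigma_n$ vary. The remaining steps are routine applications of Kolmogorov's and Carathéodory's extension theorems once the projective system is in place.
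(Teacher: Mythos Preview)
Your proof is essentially correct but takes a genuinely different route from the paper's.

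For existence, the paper does not build the random measure by hand. It first proves a short lemma (Lemma~\ref{lem:means_of_eqp}) showing that the means $\mbb E_{X\sim\mu_k}X$ must equal $\theta\tfrac{1}{k^2}\mathbbm{1}_{k\times k}+(1-\theta)\tfrac{1}{k}I_k$ for a single $\theta\in[0,1]$, so that $\theta\lambda^2+(1-\theta)\lambda_D$ is a deterministic measure whose grid histograms are exactly these means. That is precisely the hypothesis of a theorem of Orbanz~\cite{orbanz2011projective} on projective limits of random histograms, which then produces the random measure $\msf M$ in one stroke. Your Kolmogorov--Carath\'eodory construction reaches the same destination without the mean lemma and without the external citation; the cost is that your ``standard compactness argument on $[0,1]^2$'' is really compactness of the profinite space $\varprojlim[k_\ell]^2$ rather than of $[0,1]^2$ with its Euclidean topology (half-open grid rectangles are not compact), so a clean version should pass through that Cantor-type space and push forward.

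For exchangeability, the paper again offloads to the literature: by~\cite[Prop.~9.1]{kallenberg2005probabilistic}, invariance under all interval permutations at every scale already implies invariance under all Lebesgue-preserving bijections. The paper then verifies interval-permutation invariance by fixing $\sigma\in\msf S_m$ and approximating continuous test functions by step functions at level $mk$, where permutation-invariance of $\mu_{mk}$ applies directly. You instead fix the grid and approximate $\sigma$ by block exchanges; your ``dense subgroup'' heuristic is effectively a sketch of Kallenberg's proposition itself, so you are re-deriving the reduction rather than citing it.

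Net effect: the paper's proof is short because both nontrivial steps are delegated (Orbanz for existence, Kallenberg for exchangeability), while yours is self-contained and exposes the projective-limit structure explicitly. Your assessment that exchangeability is the delicate step is accurate for your route; on the paper's route that step is a single citation.
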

Recall that grapheurs are extremal random exchangeable measures by Definition~\ref{def:M0}. By Choquet's theorem, any random exchangeable measure is thus a mixture of grapheurs.  Finally, we show in Proposition~\ref{prop:eqp_as_convergence} that the sequence $(\msf G_k[\msf M])$ of quotients of a grapheur $\msf M$ converges back to $\msf M$ in a certain sense.

\subsection{Related Work}\label{sec:related_work}
There are several well-studied notions of graph limits and random graph models in the literature. We review this body of work and discuss similarities and differences with the present paper.

\paragraph{Dense and degree-bounded graph limits.} The earliest two types of graph limits pertain to sequences of simple graphs that are dense, whose limits are given by graphons~\cite{LOVASZ2006933,convergent_seqs1}, and degree-bounded, whose limits are given by graphings~\cite{benjamini_schramm,aldous2004objective,elek2007note}. We refer the reader to~\cite{lovasz2012large} for a survey of both limits. Both of these notions of graph limits are incomparable to ours, as both dense and degree-bounded sequences of simple graphs converge to the same limit in our framework, see Corollary~\ref{cor:undirected_graphs}. Further, dense graph limits can be characterized in terms of quotients~\cite[\S12]{lovasz2012large}, called ``right convergence'' of graphons.  However, these quotients are differently normalized compared to ours, and it is the set of quotients that converges in the graphon framework rather than their distribution as in our framework. 
Despite these distinctions, we highlight parallels between our development and the theory of graphons throughout our paper; in particular, grapheurs can be viewed as dual to graphons in a precise sense, see Section~\ref{sec:dual_graphons}.

\paragraph{Edge-exchangeable random graphs.} A class of random graph models closely related to the ones we obtain by sampling edges from a grapheur is called edge-exchangeable~\cite{crane2016edge,Crane_Dempsey_2019,NIPS2016_1a0a283b, exchangeable_traits, janson2018edge}. These random graphs are obtained by adding random edges to a fixed infinite vertex set (often with repetition, yielding multigraphs), and can yield both sparse and dense random graphs~\cite{janson2018edge}. 
As explained in~\cite[Rmk.~4.4]{janson2010graphons}, every edge-exchangeable multigraph model can be obtained by sampling edges from a random exchangeable measure on $[0,1]^2$. 
In fact, the random graphs we obtain by sampling edges in Section~\ref{sec:sampling} are precisely these edge-exchangeable multigraphs, normalized to have unit edge weight, see Remark~\ref{rmk:edge_exch}.
From this perspective, convergence of grapheurs can be interpreted as convergence of the edge-exchangeable multigraph models associated to them, an interpretation we make precise in Proposition~\ref{prop:edge_sample_view}.
Nevertheless, to our knowledge this notion of convergence of edge-exchangeable models has not been previously studied in the literature. 
In particular, the only metric between edge-exchangeable models we are aware of is an $\ell_1$ distance between edge probabilities~\cite[Eq.~(8)]{Crane_Dempsey_2019}. Convergence in this metric is too restrictive for our purposes, see Proposition~\ref{prop:tv_convergence}.
Further, our development of convergence via random quotients, which are not edge-exchangeable, allows us to take limits of general weighted graphs, to relate it to convergence of homomorphism densities and other graph parameters, and to test their values. 

\paragraph{Graphex processes and limits.}
The closest line of work to ours is the study of graphex processes and their associated graph limits and random graph models~\cite{caron_fox,veitch2015class,jmlr_graphon_proc,borgs2020identifiability,borgs2019sampling,multigraphex,JANSON2022103549}.
Graphexes form a subset of random exchangeable measures on $\RR^2_{\geq0}$, and admit a characterization due to Kallenberg similarly to our grapheurs~\cite{kallenberg1990exchangeable}. 
Graphexes give rise to random graph models obtained by sampling edges from them~\cite{veitch2015class,borgs2019sampling,veitch2019sampling}, analogously to our edge sampling procedure in Section~\ref{sec:sampling_edges}. 
There are also several notions of limits for simple graphs and multigraphs, whereby such graphs converge to a limiting graphex~\cite{jmlr_graphon_proc,multigraphex}. 
These notions of limits are conjectured to be inequivalent~\cite{JANSON2022103549}, and the space of graphexes with either topology is not compact. 
Some of these limits can be characterized by sampling random subgraphs (whose number of vertices is also random)~\cite{borgs2019sampling}, while others can be characterized by convergence of differently-normalized homomorphism numbers~\cite[\S2.5]{jmlr_graphon_proc} (the authors of~\cite{jmlr_graphon_proc} normalize $\mathrm{hom}(H;G)$ by $(2|E(G)|)^{|V(H)|/2}$, whereas we normalize by $(2|E(G)|)^{|E(H)|}$). 
Graphex-based graph limits are again incomparable to ours. On the one hand, for a sequence of simple graphs our grapheur-based limits can only distinguish stars from a uniform component (Corollary~\ref{cor:undirected_graphs}), whereas graphex-based limits can capture more general limiting structures (see~\cite[\S9]{JANSON2022103549} or~\cite{multigraphex}). On the other hand, grapheur-based graph limits are defined for general sequences of nonnegatively-weighted graphs, for which graphex-based limits have not been defined, allowing us to test properties pertaining to the distribution of edge weight in the graph (see Section~\ref{sec:prop_test}). 
In fact, grapheurs are particularly well-suited for property testing, since the space of grapheurs we study here is compact, in contrast to the space of graphexes; to our knowledge, there is no characterization of properties that can be tested using graphex-based limits.  Another consequence of the compactness of the space of grapheurs is that our notion of graph limits can be characterized in several equivalent ways: by convergence of random quotients, convergence of random edge-sampled graphs, and convergence of homomorphism numbers and related graph parameters. 

\subsection{Notation and Preliminaries}\label{sec:notation}





We denote by $\NN=\{0,1,2,\ldots\}$ the set of nonnegative integers. If $n\in\NN$, then $[n]=\{1,\ldots,n\}$. If $a\leq b$ are real numbers then $[a,b]=\{x\in\RR:a\leq x\leq b\}$ and $[a,b)=[a,b]\setminus\{b\}$. 
We denote by $\lambda$ the uniform measure on $[0,1]$, by $\lambda^2$ the uniform measure on $[0,1]^2$, and by $\lambda_D$ the uniform measure on the diagonal $D=\{(x,x):x\in[0,1]\}$. We denote by $\mathbbm{1}_n$ the vector of all-1's of length $n$, and we omit the subscript $n$ when the size (possibly infinite) of the all-1's vector is clear from context. We denote by $I_n$ the $n\times n$ identity matrix and by $\mathbbm{1}_{n\times n}=\mathbbm{1}_n\mathbbm{1}_n^\top$ the $n\times n$ all-1's matrix.

We identify a (directed, weighted) graph with its adjacency matrix $G\in\RR^{n\times n}_{\geq0}$, where $G_{i,j}$ is the weight of the edge $i\to j$. The collection of all adjacency matrices of graphs with unit total edge weight is denoted by $\Delta^{n\times n}$. In this paper, all edge weights are nonnegative and sum to one unless stated otherwise. 
A graph is undirected if its adjacency matrix is symmetric. We denote the space of $n\times n$ symmetric matrices by $\mbb S^n$, and the set of such matrices with nonnegative entries, which is the set of undirected graphs, by $\mbb S^n_{\geq0}$. An undirected graph $G$ is simple if  $\diag(G)=0$ and $G_{i,j}\in\{0,1\}$ for all $i,j$, and we denote the set of simple graphs by $\mbb S^n_{\mathrm{sim}}$. 
A graph $H$ is called a \emph{multigraph} if it has integer edge weights $H\in\NN^{k\times k}$, where we view $H_{i,j}$ as representing the number of parallel edges from $i$ to $j$. Note that a simple graph is a multigraph. 
We denote by $|E(G)|=\mathbbm{1}^\top G\mathbbm{1}/2$ the number of edges in a \emph{simple} graph $G$. If $H$ is a multigraph, we write $\mathbbm{1}^\top H\mathbbm{1}=\|H\|_1$ to be the number of (directed) edges in $H$.
We assume that multigraphs have no isolated vertices, defined as vertices with no self-loops and no edges incident on them. In other words, if $H\in\NN^{k\times k}$ is a multigraph, then for every $i\in[k]$ there is a $j\in[k]$ such that $H_{i,j}>0$ or $H_{j,i}>0$.
If $G\in\RR^{n\times n}_{\geq0}$ and $H\in\NN^{k\times k}$ with $k\leq n$, we denote by $G^H=\prod_{i,j=1}^kG_{i,j}^{H_{i,j}}$ the monomial over $\RR[G_{i,j}]$ defined by $H$. In this notation, we have $t_Q(H;G)=\mbb E(\rho(F_{k,n})G)^H$ for a multigraph $H\in\NN^{k\times k}$ and graph $G\in\Delta^{n\times n}$.

We denote by $\msf S_n$ the group of permutations of $[n]$, and by $\msf S_{\infty}$ the group of permutations of $\NN$ that fixes all but finitely-many integers. If $\pi\in\msf S_n$ and $G\in\RR^{n\times n}_{\geq0}$ is a graph, we denote by $\pi G\in\RR^{n\times n}_{\geq0}$ the permuted graph with entries $(\pi G)_{i,j}=G_{\pi^{-1}(i), \pi^{-1}(j)}$. This corresponds to relabeling the vertices of $G$ according to $\pi$. 
Similarly, if $x\in\RR^n$ is a vector we denote by $\pi x\in\RR^n$ the permuted vector with entries $(\pi x)_i=x_{\pi^{-1}(i)}$. If $G\in\RR^{\NN\times \NN}_{\geq9}$ and $x\in\RR^{\NN}$ are infinite matrices and vectors, and $\pi\in\msf S_{\infty}$, we define $\pi G$ and $\pi x$ analogously.

If $\mu\in\mc P(S)$ is a probability measure on a compact Polish space $S$, we write $X\sim\mu$ and $\mathrm{Law}(X)=\mu$ to denote a random variable distributed according to $\mu$. 
If $X,Y$ are two random variables on $S$, we write $X\overset{d}{=}Y$ to denote equality in distribution, i.e., the equality $\mathrm{Law}(X)=\mathrm{Law}(Y)$. 
If $f$ is a function, we denote $\mbb E_{\mu}f=\mbb E_{X\sim\mu}f(X)$ its expectation with respect to $\mu$. 
If $(\mu_n)$ is a sequence of measures in $\mc P(S)$, we say $(\mu_n)$ converges weakly to $\mu$ and write $\mu_n\to\mu$ if $\mbb E_{\mu_n}f\to \mbb E_{\mu}f$ for all continuous $f$. 
All measures in this paper are supported in a single ambient compact set. In particular, all sequences of measures are tight. 

A coupling of random variables $X_1,\ldots,X_k$ taking values in Polish spaces $S_1,\ldots,S_k$, respectively, is a probability distribution $\Gamma\in\mc P(S_1\times\cdots\times S_k)$ such that if $(X_1',\ldots,X_k')\sim\Gamma$ then $X_i'\overset{d}{=}X_i$ for all $i\in[k]$. In this paper we refer to a coupling of $X_1,\ldots,X_k$ simply by the tuple of jointly-distributed random variables $(X_1',\ldots,X_k')$.
If $X_1,X_2$ are random graphs taking values in $\RR^{k\times k}$, we define the Wasserstein-1 distance between them by
\begin{equation*}
    W_1(X_1,X_2)=\inf_{\substack{\textrm{random } (X_1',X_2')\\ X_i'\overset{d}{=}X_i}}\mbb E\|X_1'-X_2'\|_1 = \sup_{\substack{f\colon\RR^{k\times k}\to\RR\\ \textrm{1-Lipschitz}}}|\mbb Ef(X_1)-\mbb Ef(X_2)|,
\end{equation*}
where the infimum is over couplings $(X_1',X_2')$ of $X_1$ and $X_2$, the norm $\|\cdot\|_1$ is the entrywise $\ell_1$ norm, and the supremum is over 1-Lipschitz functions with respect to this norm. Note that a sequence of random graphs $(X_n)\subseteq\Delta^{k\times k}$ converges weakly to the random graph $X\in\Delta^{k\times k}$ if and only if $\lim_nW_1(X_n,X)=0$ by~\cite[Thm.~6.9]{villani2008optimal}.

For a compact Polish space $S$, the space $\mc P(S)$ endowed with the weak topology is itself compact and Polish. If $\msf M$ is a random measure, that is, a random variable taking values in the Polish space $\mc P(S)$, we write $\mbb E_{\msf M}f$ for the scalar random variable obtained by integrating a deterministic function $f$ against the random measure $\msf M$. In particular, we write $\msf M(B)=\mbb E_{\msf M}\mathbbm{1}_B$ to denote the scalar random variable giving the (random) measure of a measurable set $B$, where $\mathbbm{1}_B(x)=1$ if $x\in B$ and zero otherwise. A sequence of random measures $(\msf M_n)$ converges weakly to $\msf M$ if $\mathrm{Law}(\msf M_n)\to\mathrm{Law}(\msf M)$ weakly, where these laws are probability measures over the space of probability measures. Equivalently, the sequence $(\msf M_n)$ converges weakly to $\msf M$ if the sequence of scalar random variables $\mbb E_{\msf M_n}f$ converges weakly to $\mbb E_{\msf M}f$ for all continuous functions $f$~\cite[Thm.~A2.3]{kallenberg2005probabilistic}.

\section{Quotient Convergence: Global and Local Views}\label{sec:hom_convergence}


In this section we prove Theorem~\ref{thm:hom_num_conv} relating quotient convergence to convergence of homomorphism numbers and quotient densities.  Before presenting this proof, we give a few examples illustrating sequences of graphs that are quotient-convergent.  As our first example, we show that the sequence of complete graphs is quotient-convergent.

\begin{example}[Convergence of complete graphs]\label{ex:complete_graphs}
To show that the sequence $(\mathbbm{1}_{n\times n})_{n\in\NN}$ of complete graphs converges, consider the associated normalized graphs $(G_n=\mathbbm{1}_{n\times n}/n^2\in\Delta^{n\times n})$ and observe that the random quotients are given as $\rho(F_{k,n})G_n = \frac{1}{n^2}NN^\top$ for a multinomial random variable $N=(N_1,\ldots,N_k)\sim\mathrm{Multinom}(n,k,\mathbbm{1}_k/k)$.  Since $\mbb E\rho(F_{k,n})G_n = \frac{1}{k^2}\mathbbm{1}_{k\times k} + O(1/n)$ and $\mathrm{Var}[\rho(F_{k,n})G_n]_{i,j}=O(1/n)$ for all $i,j\in[k]$, we conclude that $\rho(F_{k,n})G_n$ converges weakly to the deterministic matrix $\frac{1}{k^2}\mathbbm{1}_{k\times k}$ as $n\to\infty$. Since this holds for all $k\in\NN$, we conclude that $(G_n)$ is quotient-convergent.
Note that the quotient densities and homomorphism numbers of this sequence likewise converge. Indeed, for any $H\in\NN^{k\times k}$ we have 
$t_Q(H;G_n)=\mbb E(\rho(F_{k,n})G_n)^H\xrightarrow{n\to\infty} (\frac{1}{k^2}\mathbbm{1}_k\mathbbm{1}_k^\top)^H=\frac{1}{k^{2\|H\|_1}}$. 
Similarly,
$\mathrm{hom}(H;G_n)=\frac{n^k}{n^{2\|H\|_1}}$, which converges to 1 if $H$ is a disjoint union of edges and to 0 otherwise.  An analogous set of arguments show that the sequence $(I_n)$ of $n$ self-loops is quotient-convergent.
\end{example}

As our next example, we consider sequences of (directed) star graphs and we show that these are quotient-convergent as well.  Note that stars are sparse graphs, as their edge density converges to zero.  Nevertheless, we will see in Section~\ref{sec:limits} that growing-sized stars have nontrivial limits in our framework, whereas the limits of stars in the context of graphons yield trivial limiting objects. Similarly, stars are not degree-bounded, as the degree of the star center increases with the size of the star, hence they do not have a limit in the framework of graphings. Stars do have limits in the graphex sense, though they differ from ours, see Section~\ref{sec:related_work}.

\begin{example}[Convergence of stars]\label{ex:stars}
The sequence of directed star graphs all of whose edges point away from the center may be expressed as $(e_1\mathbbm{1}_n^\top)$ for $e_1=(1,0,\ldots,0)^\top$. This sequence quotient-converges, since random quotients $\rho(F_{k,n})G_n$ of the associated normalized graphs $G_n=\frac{1}{n}e_1\mathbbm{1}_n^\top$ are given by $\rho(F_{k,n})G_n = \frac{1}{n}e_IN^\top$ for a multinomial vector $N\sim\mathrm{Multinom}(n,k,\mathbbm{1}_k/k)$ and a uniformly random index $I\sim\mathrm{Unif}([k])$. 
Since $N/n$ weakly converges to the deterministic vector $\mathbbm{1}_k/k$, we conclude that $\rho(F_{k,n})G_n$ converges weakly to the randomly-labeled star $\frac{1}{k}e_I\mathbbm{1}_k^\top$, and hence that $(G_n)$ is quotient-convergent.
Similarly, the quotient densities and homomorphism numbers of this sequence of stars converge, sinces
\begin{equation*}
    \lim_{n\to\infty}t_Q(H;G_n) = \mbb E\left(\frac{1}{k}e_I\mathbbm{1}_k^\top\right)^H=\begin{cases}
        \frac{1}{k^{1+\sum_jm_j}} & \textrm{if } H=e_im^\top \textrm{ for } m\in\NN^k \textrm{ and } i\in[k],\\ 0 & \textrm{otherwise},
    \end{cases}
\end{equation*}
and 
\begin{equation*}
    \lim_{n\to\infty}\mathrm{hom}(H;G_n) = \begin{cases} \lim_{n\to\infty}\frac{n^{k-1}}{n^{\sum_jm_j}} & \textrm{if }  H=e_im^\top,\\ 0 & \textrm{otherwise}, \end{cases} = \begin{cases}
        1 & \textrm{if } H= e_im^\top \textrm{ with } \sum_jm_j=k-1,\\ 0 & \textrm{otherwise}.
    \end{cases}
\end{equation*}
Analogous arguments show that the sequence of directed star graphs all of whose edges point towards the center quotient-converges, and so does the sequence of undirected stars.
\end{example}

As our final example of quotient-convergence, we consider an infinite weighted graph with edge weights summing to one and we show that any sequence of finite subgraphs obtained from this infinite graph is quotient-convergent.

\begin{example}[Subgraphs of an infinite weighted graph]
Suppose $G_{\infty}=(G_{i,j})_{i,j\in\NN}$ is an infinite two-dimensional array with $G_{i,j}\geq0$ and $\sum_{i,j}G_{i,j}=1$, and consider the sequence of normalized truncations $G_n=\frac{1}{\sum_{i,j=1}^nG_{i,j}}(G_{i,j})_{i,j\in[n]}$.  This sequence of subgraphs $(G_n)$ is quotient-convergent. Indeed, let $F_k\colon\NN\to[k]$ be a uniformly random map (whose images $F_k(i)$ are iid uniformly distributed over $[k]$), and consider the random graph $L_k=\rho(F_k)G_{\infty}$ whose entries are given by~\eqref{eq:quotient_graph}. We claim that the sequence $(\rho(F_{k,n})G_n)_n$ converges weakly to $L_k$. 
To this end, view $G_n$ as an infinite array by zero-padding it, and note that $(\rho(F_k)G_n,\rho(F_k)G_{\infty})$ is a coupling of $(\rho(F_{k,n})G_n, L_k)$. 
Using this coupling, we obtain the bound
\begin{equation*}
W_1(\rho(F_{k,n})G_n, L_k)\leq \mbb E\|\rho(F_k)(G_n-G_{\infty})\|_1\leq \|G_n-G_{\infty}\|_1\xrightarrow{n\to\infty}0,
\end{equation*} 
where the second inequality follows from~\eqref{eq:quotient_graph}.
Thus, we conclude that $(G_n)$ is quotient-convergent, as claimed.
Quotient densities and homomorphism numbers of this sequence again converge, with limits $\lim_nt_Q(H;G_n)=\mbb EL_k^H$ and $\lim_{n}\mathrm{hom}(H;G_n)=\sum_{f\colon[k]\to\NN}\prod_{i,j=1}^kG_{f(i),f(j)}^{H_{i,j}}$, which is finite since $(G_{i,j})$ is summable.  
\end{example}
Finally, we remark that convex combinations of quotient-convergent sequences also converge.
\begin{example}[Combinations of graphs]\label{ex:combinations}
Suppose $(G_n)$ and $(G_n')$ are quotient-convergent sequences of graphs. Then for any $\theta\in[0,1]$ the convex combination $(\theta G_n+(1-\theta)G_n')$ of these graphs is quotient-convergent as well. Indeed, for each $k\in\NN$, let $L_k$ and $L_k'$ be the weak limits of $\rho(F_{k,n})G_n$ and $\rho(F_{k,n})G_n'$, respectively. For each $n\in\NN$, let $(\widetilde L_k,\widetilde F_{k,n})$ and $(\widetilde L_k',\widetilde F_{k,n}')$ be couplings of $(L_k,F_{k,n})$ and $(L_k',F_{k,n})$, respectively, attaining the Wasserstein-1 distances $W_1(L_k,\rho(F_{k,n})G_n)$ and $W_1(L_k',\rho(F_{k,n})G_n)$. We define a coupling of $\theta L_k+(1-\theta)L_k'$ and $\rho(F_{k,n})(\theta G_n+(1-\theta)G_n')$ as follows. Draw $F_{k,n}$ uniformly at random, and draw $\widetilde L_k$ from the coupling $(\widetilde L_k,\widetilde F_{k,n})$ conditioned on $\widetilde F_{k,n}=F_{k,n}$. Draw $\widetilde L_k'$ similarly and independently. Then
\begin{equation*}\begin{aligned}
    &W_1(\theta L_k+(1-\theta)L_k', \rho(F_{k,n})(\theta G_n+(1-\theta)G_n')) \leq \mbb E\|\theta\widetilde L_k + (1-\theta)\widetilde L_k' - \rho(F_{k,n})(\theta G_n+(1-\theta G_n'))\|_1\\&\quad \leq \theta W_1(L_k,\rho(F_{k,n})G_n)+(1-\theta)W_1(L_k',\rho(F_{k,n})G_n')\xrightarrow{n\to\infty}0.
\end{aligned}\end{equation*}
This proves the claim. An analogous argument shows that a convex combination of any finitely-many quotient-convergent sequences of graphs is also quotient-convergent.
\end{example}

Having seen a few examples of quotient convergence, we proceed to prove Theorem~\ref{thm:hom_num_conv}.  We begin by showing the first part of that theorem on the equivalence between quotient convergence and convergence of quotient densities.

\begin{proposition} \label{prop:quotient-part1}
    A sequence of graphs $(G_n\in\Delta^{n\times n})$ is quotient-convergent if and only if $(t_Q(H;G_{n}))_n$ converges for each $H\in\NN^{k\times k}$ and $k\in\NN$. 
\end{proposition}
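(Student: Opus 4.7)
The forward direction is straightforward: since $\Delta^{k\times k}\subseteq[0,1]^{k^2}$ is compact, every monomial $A\mapsto A^H$ for $H\in\NN^{k\times k}$ is continuous and bounded on $\Delta^{k\times k}$, so if $\rho(F_{k,n})G_n$ converges weakly to some random matrix $L_k$, then the defining property of weak convergence immediately gives $t_Q(H;G_n)=\mathbb{E}(\rho(F_{k,n})G_n)^H\to\mathbb{E}L_k^H$.

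For the reverse direction I plan to use the method of moments on the compact set $\Delta^{k\times k}$. Fixing $k\in\NN$, the sequence of laws $\mathrm{Law}(\rho(F_{k,n})G_n)\in\mc P(\Delta^{k\times k})$ is tight and admits weakly convergent subsequences, so it suffices to show that any two subsequential weak limits $L,L'$ must coincide. By the forward direction applied along the two subsequences, I will have $\mathbb{E}L^H=\mathbb{E}L'^H=\lim_n t_Q(H;G_n)$ for every multigraph $H\in\NN^{k\times k}$. Since polynomials are uniformly dense in $C(\Delta^{k\times k})$ by Stone--Weierstrass, the equality $L\overset{d}{=}L'$ will follow once I verify $\mathbb{E}L^M=\mathbb{E}L'^M$ for every $M\in\NN^{k\times k}$, including those having a nonempty isolated vertex set $S$ (which fail to be multigraphs).

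To reach such $M$, my plan is to express $\mathbb{E}(\rho(F_{k,n})G_n)^M$ as a finite linear combination---with coefficients depending only on $k$ and $M$, not on $n$---of quotient densities $t_Q(H;G_n)$ of multigraphs $H$. I will begin by expanding
\begin{equation*}
    (\rho(F_{k,n})G_n)^M=\prod_{i,j\in[k]}\Bigl(\sum_{v\in F_{k,n}^{-1}(i),\,u\in F_{k,n}^{-1}(j)}(G_n)_{v,u}\Bigr)^{M_{i,j}}
\end{equation*}
and taking the expectation over the uniform map $F_{k,n}$; the result is a sum indexed by assignments of the $2\|M\|_1$ half-edges of $M$ to vertices of $[n]$. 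Grouping these assignments by the partition of half-edges at each $M$-vertex that share an $[n]$-image identifies a coarser multigraph $\widetilde M$, obtained from $M$ by quotient identifications, together with an injective map of $V(\widetilde M)$ into $[n]$, reducing the expectation to a finite sum $\sum_{\widetilde M}c(k,M,\widetilde M)\,\mathrm{inj}(\widetilde M;G_n)/n^{|V(\widetilde M)|}$. Applying the same expansion to each $t_Q(H;G_n)$ for a multigraph $H$ yields analogous identities, and M\"obius inversion on the partial order of quotients of multigraphs (the partial order the paper introduces later in this section) expresses each $\mathrm{inj}(\widetilde M;G_n)/n^{|V(\widetilde M)|}$ as a finite linear combination of quotient densities $t_Q(H;G_n)$ with coefficients independent of $n$.

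Composing these two linear changes of basis will write $\mathbb{E}(\rho(F_{k,n})G_n)^M$ as a finite $n$-independent linear combination of the convergent quantities $t_Q(H;G_n)$, forcing convergence and in particular $\mathbb{E}L^M=\mathbb{E}L'^M$ for every $M$. The hard part will be the combinatorial bookkeeping: precisely identifying which quotients $\widetilde M$ arise in the expansion, verifying that both families of coefficients are well-defined and finite, and confirming that the M\"obius inversion closes on the partial order of multigraph quotients set up later in Section~\ref{sec:hom_convergence} to establish the full equivalence of Theorem~\ref{thm:hom_num_conv}.
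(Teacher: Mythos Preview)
Your forward direction is exactly the paper's argument. For the reverse direction you are working harder than the statement requires. The proposition quantifies over \emph{all} $H\in\NN^{k\times k}$, not only over multigraphs in the paper's sense (no isolated vertices). Every monomial on $\RR^{k\times k}$ is $X^H$ for some $H\in\NN^{k\times k}$, so the hypothesis already hands you convergence of every moment of $\rho(F_{k,n})G_n$; on the compact set $\Delta^{k\times k}$ this is equivalent to weak convergence by the method of moments. The paper's proof is precisely this two-line observation, with no subsequential argument and no combinatorics.

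The reduction you sketch---expanding $\mathbb{E}(\rho(F_{k,n})G_n)^M$ for $M$ with isolated vertices into an $n$-independent linear combination of $\mathrm{inj}(K;G_n)$ over multigraphs $K$, then inverting via M\"obius on the refinement order to recover these from $t_Q(H;G_n)$ for multigraphs $H$---is correct and is exactly the content needed to bridge Proposition~\ref{prop:quotient-part1} to statement~(2) of Theorem~\ref{thm:hom_num_conv}, where $H$ \emph{is} restricted to multigraphs. The paper does this bookkeeping separately in Proposition~\ref{prop:hom_conv} through~\eqref{eq:tQ_in_inj} and its inversion~\eqref{eq:inj_in_tQ}. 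So your detour is not wrong, only misplaced: it belongs to the next proposition rather than this one.
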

\begin{proof}
    By definition, the sequence $(G_n\in\Delta^{n\times n})$ is quotient-convergent when the sequence$(\rho(F_{k,n})G_n\in\Delta^{k\times k})_n$ converges weakly for each $k\in\NN$. Since $\Delta^{k\times k}$ is compact, the latter sequence converges weakly if and only if all its associated sequences of moments converge. Now observe that the monomials in $\RR[X_{i,j}]_{i,j\in[k]}$ all have the form $X^H = \prod_{i,j}X_{i,j}^{H_{i,j}}$ for some $H\in\NN^{k\times k}$, and that the corresponding moment of $\rho(F_{k,n})G_n$ is precisely $\mbb E(\rho(F_{k,n})G_n)^{H} = t_Q(H;G_n)$ by definition of $t_Q$ in~\eqref{eq:quotient_densities}. 
    This yields the desired conclusion.
\end{proof}

This proves the first part of Theorem~\ref{thm:hom_num_conv}.
To show the second part relating quotient convergence with convergence of homomorphism numbers, we express homomorphism numbers in terms of quotient densities and vice-versa. To this end, we begin by expressing quotient densities in terms of injective homomorphism numbers using~\cite[Prop.~6.8]{levin2025deFin}.  This expression is stated in terms of a partial order on multigraphs induced by quotients. Specifically, for two multigraphs $K\in\NN^{n\times n},H\in\NN^{m\times m}$ we say that $K$ is a \emph{refinement} of $H$, denoted $K\leq_R H$, if $H$ is a quotient of $K$, i.e., if there exists a surjective map $f\colon [n]\to [m]$ such that $H=\rho(f)K$.  We further let $R_{K,H}$ be the number of such maps $f$. Note that $R_{H,H}=|\mathrm{Aut}(H)|$ for any multigraph $H$.  The poset of all multigraphs with the refinement order is a disjoint union of posets, one for each number of edges. The poset of all multigraphs with $n$ edges has a minimal element, namely, the disjoint union of $n$ edges, and a maximal element, namely, a single vertex with $n$ self-loops. We illustrate the poset of all multigraphs with two edges in Figure~\ref{fig:refinements}. 
\begin{figure}[h]
    \centering
    \includegraphics[width=0.33\linewidth]{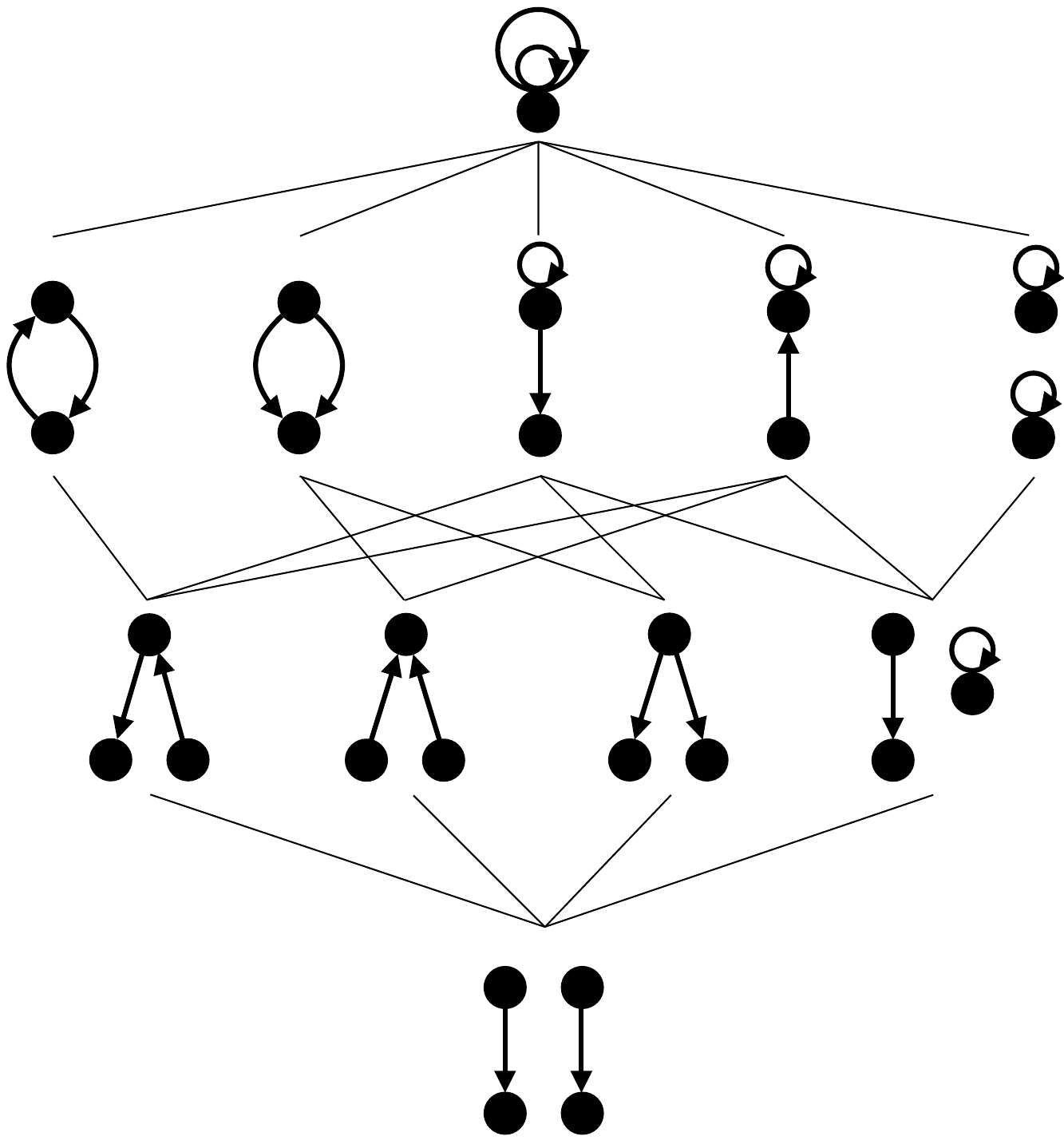}
    \caption{Hasse diagram for the refinement poset consisting of multigraphs with two edges. Here there is an upward path from $K$ to $H$ precisely when $K\leq_R H$.}
    \label{fig:refinements}
\end{figure}

The quotient density $t_Q(H;G)$ can be expressed in terms of a linear combination of injective homomorphism numbers over all refinements of $H$~\cite[Prop.~6.8]{levin2025deFin}:
\begin{equation}\label{eq:tQ_in_inj}
    t_Q(H;G) = \sum_{K\leq_R H}|V(H)|^{-|V(K)|}\frac{H!}{K!}\frac{R_{K,H}}{R_{K,K}R_{H,H}}\mathrm{inj}(K;G),
\end{equation}
where $H!=\prod_{i,j=1}^k(H_{i,j}!)$ and similarly for $K!$.
Here $\mathrm{inj}(K;G)$ is defined similarly to $\mathrm{hom}(K;G)$ in~\eqref{eq:hom_nums} but with a sum only over injective maps.
The expression~\eqref{eq:tQ_in_inj} states that $H$ is a subgraph of some quotient of $G$, i.e., $t_Q(H;G)>0$, if and only if some refinement of $H$ is a subgraph of $G$ itself, i.e., $\mathrm{inj}(K;G)>0$. 

We are now ready to prove the second part of Theorem~\ref{thm:hom_num_conv}. The following argument is a simple modification of the one from~\cite[Prop.~6.8]{levin2025deFin}.
\begin{proposition}\label{prop:hom_conv}
    A sequence of graphs $(G_n\in\Delta^{n\times n})$ is quotient-convergent if and only if $(\mathrm{inj}(H;G_n))$ converges for each $H\in\NN^{k\times k}$ and $k\in\NN$, which is equivalent to $(\mathrm{hom}(H;G_n))$ converging for each $H\in\NN^{k\times k}$ and $k\in\NN$. 
\end{proposition}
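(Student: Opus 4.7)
The plan is to combine Proposition~\ref{prop:quotient-part1} with~\eqref{eq:tQ_in_inj} and a classical identity expressing homomorphism numbers via injective homomorphism numbers; each of these is a finite triangular linear system that can be inverted. By Proposition~\ref{prop:quotient-part1}, it suffices to show that convergence of $(t_Q(H;G_n))_n$ for every multigraph $H$ is equivalent to convergence of $(\mathrm{inj}(H;G_n))_n$ for every $H$, and likewise for $(\mathrm{hom}(H;G_n))_n$.

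For the first equivalence, I would read off~\eqref{eq:tQ_in_inj} as an expression of $t_Q(H;G)$ as a \emph{finite} linear combination of the $\mathrm{inj}(K;G)$ over refinements $K\leq_R H$. Finiteness holds because every refinement satisfies $\|K\|_1=\|H\|_1$ and, since multigraphs have no isolated vertices by convention, $|V(K)|\leq 2\|K\|_1$. The coefficient of $\mathrm{inj}(H;G)$ itself, obtained by setting $K=H$, equals $|V(H)|^{-|V(H)|}/|\mathrm{Aut}(H)|$, which is strictly positive, while any other $K\leq_R H$ appearing in the sum satisfies $|V(K)|>|V(H)|$. Hence~\eqref{eq:tQ_in_inj} is a finite triangular system with respect to $|V(\cdot)|$ among multigraphs of a fixed edge count, and I would invert it by induction on $|V(H)|$ proceeding downward from the base case $|V(H)|=2\|H\|_1$ (where the sum has the single term $K=H$, since $H$ is the minimum of the refinement poset on multigraphs with $\|H\|_1$ edges). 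This exhibits each $\mathrm{inj}(H;G)$ as a finite linear combination of $t_Q(K;G)$'s, yielding the claimed equivalence.

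For the second equivalence, I would invoke the classical identity
\[
\mathrm{hom}(H;G)=\sum_{\pi\in\Pi(V(H))}\mathrm{inj}(H/\pi;G),
\]
where $\Pi(V(H))$ is the lattice of set partitions of $V(H)$ and $H/\pi$ is the quotient multigraph obtained by summing the entries of $H$ within each block of $\pi$. This identity is proved by partitioning the maps $f\colon V(H)\to V(G)$ appearing in~\eqref{eq:hom_nums} according to their kernel partition and bijecting each kernel class with the set of injections out of $H/\pi$. It is again a finite, upper-triangular linear system indexed by the partition lattice, with the identity partition contributing the diagonal term $\mathrm{inj}(H;G)$ and every coarser $\pi$ contributing $\mathrm{inj}(H/\pi;G)$ with $|V(H/\pi)|<|V(H)|$. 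M\"obius inversion on $\Pi(V(H))$ then expresses each $\mathrm{inj}(H;G)$ as an integer combination of $\mathrm{hom}(H/\pi;G)$'s, which yields the stated equivalence.

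I do not foresee a substantive obstacle: both equivalences reduce to inverting a finite triangular linear system with nonzero diagonal entries, and the relevant identities are either recorded in the excerpt (\eqref{eq:tQ_in_inj}, from~\cite[Prop.~6.8]{levin2025deFin}) or are elementary combinatorial facts. The only careful bookkeeping is verifying the triangularity and the finiteness of the refinement poset at fixed edge count, both of which are immediate from the definitions given in the excerpt.
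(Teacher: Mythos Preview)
Your proposal is correct and follows essentially the same approach as the paper: both arguments invert~\eqref{eq:tQ_in_inj} as a finite triangular linear system over the refinement poset (you grade by $|V(\cdot)|$, which is equivalent since $K\leq_R H$ with $K\not\cong H$ forces $|V(K)|>|V(H)|$), and both handle the $\mathrm{hom}$--$\mathrm{inj}$ equivalence by the same triangular inversion, the paper stating it via the refinement-poset identity~\eqref{eq:hom_in_inj} and you via the equivalent partition-lattice formulation with M\"obius inversion.
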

\begin{proof}
    For each $m\in\NN$, define the matrix $M$ indexed by all multigraphs containing $m$ edges whose entries are given by $M_{H,K} = |V(H)|^{-|V(K)|}\frac{H!}{K!}\frac{R_{K,H}}{R_{K,K}R_{H,H}}$ if $K\leq_R H$ and $M_{H,K}=0$ otherwise. 
    This is a lower-triangular matrix with respect to the refinement partial order, and has positive diagonal entries $M_{H,H}=(|V(H)|^{|V(H)|}|\mathrm{Aut}(H)|)^{-1}$, hence it is invertible. Thus, we can write 
    \begin{equation}\label{eq:inj_in_tQ}
        \mathrm{inj}(H;G) = \sum_{H\leq_R K}(M^{-1})_{H,K}t_Q(K;G).
    \end{equation}
    Based on~\eqref{eq:inj_in_tQ} and~\eqref{eq:tQ_in_inj}, we conclude that $(t_Q(H;G_n))$ converges for all multigraphs $H$ if and only if $(\mathrm{inj}(H;G_n))$ converges for all multigraphs $H$. 
    Injective and non-injective homomorphism numbers can be similarly related, with one of the quantities being expressed in terms of linear combinations of the other as in~\cite[\S5.2.3]{lovasz2012large}. For example, we have~\cite[Eq.~(73)]{levin2025deFin} that 
    \begin{equation}\label{eq:hom_in_inj}
        \mathrm{hom}(H;G)=\sum_{H\leq_R K}\frac{R_{H,K}}{R_{K,K}}\mathrm{inj}(K;G),
    \end{equation}
    where the sum is over multigraphs that $H$ refines. This expression can be inverted as above to obtain injective homomorphism numbers in terms of non-injective homomorphism numbers.
    This allows us to conclude that convergence of $(\mathrm{inj}(H;G_n))$ for all multigraphs $H$ is equivalent to convergence of $(\mathrm{hom}(H;G_n))$ for all multigraphs $H$.
\end{proof}

Combined together, Propositions~\ref{prop:quotient-part1} and~\ref{prop:hom_conv} complete the proof of Theorem~\ref{thm:hom_num_conv}.  We illustrate the utility of these results next by expressing quotient convergence of simple graphs in terms of appropriately-normalized homomorphism numbers.  The resulting expression allows us to conclude that all dense simple graphs are quotient-convergent; in fact, we prove that all the homomorphism numbers of dense simple graphs converge to the same limit (see also Corollary~\ref{cor:undirected_graphs} below). 
\begin{corollary}\label{cor:dense_graphs1}
    Let $(G_n\in\mbb S^n_{\mathrm{sim}})$ be a sequence of simple graphs.
    \begin{enumerate}[align=left, font=\emph]
        \item[(Convergence of simple graphs)] The sequence $(G_n)$ is quotient-convergent if and only if $\left(\frac{\mathrm{hom}(H;G_n)}{|E(G_n)|^{|E(H)|}}\right)$ converges for all simple and connected graphs $H\in\mbb S^k_{\mathrm{sim}}$. 

        \item[(Convergence of dense graphs)] Suppose $\liminf_{n\to\infty}\frac{|E(G_n)|}{n^2}>0$. Then $(G_n)$ is quotient-convergent. In fact, for a simple and connected $H\in\mbb S^k_{\mathrm{sim}}$, we have 
    \begin{equation*}
        \lim_{n\to\infty}\frac{\mathrm{hom}(H;G_n)}{|E(G_n)|^{|E(H)|}} = \begin{cases} 1 & \textrm{if } H=K_2,\\ 0 & \textrm{otherwise}\end{cases}
    \end{equation*}
    where $K_2$ denotes a graph with a single edge.
    \end{enumerate}
\end{corollary}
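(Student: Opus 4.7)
The plan is to derive both parts of the corollary from Theorem~\ref{thm:hom_num_conv}, which characterizes quotient convergence of $(G_n/(2|E(G_n)|))$ as convergence of the normalized homomorphism numbers $\mathrm{hom}(H;G_n)/(2|E(G_n)|)^{\|H\|_1}$ for every multigraph $H$. For Part~1, I reduce this full family of conditions to the subfamily indexed only by simple connected $H$. For Part~2, I apply Part~1 and evaluate each such limit directly under the density hypothesis.

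For Part~1, the forward direction follows from Theorem~\ref{thm:hom_num_conv} by specializing to simple connected $H$; for such $H$, $\|H\|_1 = 2|E(H)|$, so the denominator $(2|E(G_n)|)^{\|H\|_1}$ matches the one stated in the corollary up to the harmless constant $2^{|E(H)|}$. For the backward direction, I extend convergence in two steps. First, by the multiplicativity $\mathrm{hom}(H;G_n) = \prod_j \mathrm{hom}(H_j;G_n)$ across connected components of $H$ together with the additivity $|E(H)| = \sum_j |E(H_j)|$, convergence of $\mathrm{hom}(H;G_n)/|E(G_n)|^{|E(H)|}$ for simple connected $H$ extends to all simple $H$. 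Second, I reduce arbitrary multigraphs $H$ to simple undirected graphs using the special structure of $G_n$: since $G_n$ is symmetric, $\{0,1\}$-valued, and loopless, multigraphs with a self-loop give $\mathrm{hom}(H;G_n) = 0$, while for loopless multigraphs $H$ one has $\mathrm{hom}(H;G_n) = \mathrm{hom}(H^\flat;G_n)$, where $H^\flat$ is the simple undirected graph with $H^\flat_{i,j} = 1$ whenever $H_{i,j} \geq 1$ or $H_{j,i} \geq 1$. The normalization powers $(2|E(G_n)|)^{\|H\|_1}$ and $(2|E(G_n)|)^{|E(H^\flat)|}$ differ by a nonnegative power of $(2|E(G_n)|)$, since $\|H\|_1 \geq |E(H^\flat)|$, and the product of the convergent ratio with this extra factor again converges (the Part~1 hypothesis in fact constrains $|E(G_n)|$ to either tend to infinity or stabilize, so the extra factor either vanishes or tends to a finite constant).

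For Part~2, by Part~1 it suffices to evaluate $\lim_n \mathrm{hom}(H;G_n)/|E(G_n)|^{|E(H)|}$ for each simple connected $H$ under the density hypothesis $\liminf_n |E(G_n)|/n^2 > 0$. The case $H = K_2$ follows from the identity $\mathrm{hom}(K_2;G_n) = 2|E(G_n)|$, yielding a constant ratio. For simple connected $H$ with $|E(H)| \geq 2$, I combine the trivial bound $\mathrm{hom}(H;G_n) \leq n^{|V(H)|}$ with $|E(G_n)| \geq c\,n^2$ (which holds eventually) to obtain
\[
\frac{\mathrm{hom}(H;G_n)}{|E(G_n)|^{|E(H)|}} \;\leq\; c^{-|E(H)|}\, n^{|V(H)| - 2|E(H)|}.
\]
Since $H$ is simple and connected, $|V(H)| \leq |E(H)| + 1$, so the exponent is at most $1 - |E(H)| \leq -1$, and the bound tends to $0$.

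The main technical obstacle is the second reduction in Part~1, from multigraphs to their simple symmetrizations. Although the value $\mathrm{hom}(H;G_n)$ collapses cleanly under symmetrization to $\mathrm{hom}(H^\flat;G_n)$ (thanks to $G_n$ being binary and symmetric), the normalization power $(2|E(G_n)|)^{\|H\|_1}$ depends on the full multigraph structure of $H$ rather than just on $H^\flat$. Tracking the extra powers of $|E(G_n)|$ is straightforward when $|E(G_n)| \to \infty$ (they drive the extra factor to zero), but requires separate verification when $|E(G_n)|$ stabilizes along a subsequence, relying on the rigidity of the Part~1 hypothesis to rule out erratic oscillations of $|E(G_n)|$.
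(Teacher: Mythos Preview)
Your proposal is correct and follows essentially the same route as the paper's proof: both invoke Theorem~\ref{thm:hom_num_conv}, reduce an arbitrary multigraph $H$ to its simple symmetrization (your $H^\flat$ is the paper's $\widetilde H_{\mathrm{sym}}$) using that $G_n$ is $\{0,1\}$-valued and loopless, handle the resulting mismatch in normalization powers via the dichotomy ``$|E(G_n)|\to\infty$ or eventually constant,'' and then pass from simple to simple connected $H$ by multiplicativity over components. For Part~2 the only difference is cosmetic: you use the crude bound $\mathrm{hom}(H;G_n)\le n^{|V(H)|}$, whereas the paper uses the slightly sharper $\mathrm{hom}(H;G_n)\le 2|E(G_n)|\,n^{|V(H)|-2}$, but both yield the same exponent comparison $|V(H)|<2|E(H)|$ for connected $H$ with at least two edges.
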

\begin{proof}
    For the first claim, if $H\in\mbb N^{k\times k}$ we define a strictly upper-triangular $\widetilde H\in\{0,1\}^{n\times n}$ by $\widetilde H_{i,j}=1$ if $i<j$ and either $H_{i,j}>0$ or $H_{j,i}>0$, and $\widetilde H_{i,j}=0$ otherwise.  
    Then $\mathrm{hom}(H;G_n)=\mathrm{hom}(\widetilde H,G_n)$ for all simple graphs $G_n$ of all sizes. 
    Therefore, we have
    \begin{equation*}
        \mathrm{hom}(H;\tfrac{G_n}{\mathbbm{1}^\top G_n\mathbbm{1}})=\mathrm{hom}(\widetilde H;\tfrac{G_n}{\mathbbm{1}^\top G_n\mathbbm{1}})\cdot (2|E(G_n)|)^{\mathbbm{1}^\top(\widetilde H- H)\mathbbm{1}}.
    \end{equation*}
    Since $\mathbbm{1}^\top \widetilde H\mathbbm{1}\leq \mathbbm{1}^\top H\mathbbm{1}$, we conclude that $\mathrm{hom}(H;\tfrac{G_n}{\mathbbm{1}^\top G_n\mathbbm{1}})$ converges if $\mathrm{hom}(\widetilde H;\tfrac{G_n}{\mathbbm{1}^\top G_n\mathbbm{1}})$ converges, since either $|E(G_n)|\to\infty$ and $\mathrm{hom}(H;\tfrac{G_n}{\mathbbm{1}^\top G_n\mathbbm{1}})\to0$ or $|E(G_n)|$ is constant for all large $n$. 
    In turn, defining the simple graph $\widetilde H_{\mathrm{sym}}=(\widetilde H + \widetilde H^\top)/2$, we have 
    \begin{equation*}
        \mathrm{hom}\left(\widetilde H;\frac{G_n}{\mathbbm{1}^\top G_n\mathbbm{1}}\right)=\frac{\mathrm{hom}(\widetilde H_{\mathrm{sym}}; G_n)}{(2|E(G_n)|)^{|E(\widetilde H_{\mathrm{sym}})|}}.
    \end{equation*}
    Thus, the sequence $(G_n)$ quotient-converges if and only if $(\frac{\mathrm{hom}(H;G_n)}{|E(G_n)|^{|E(H)|}})_n$ converges for all simple $H$.
    To see that it suffices to consider simple and connected graphs, note that if $H=H_1\sqcup\ldots\sqcup H_{\ell}$ is a disjoint union of connected graphs, then $\frac{\mathrm{hom}(H;G_n)}{|E(G_n)|^{|E(H)|}}=\prod_{i=1}^{\ell}\frac{\mathrm{hom}(H_i;G_n)}{|E(G_n)|^{|E(H_i)|}}$. This proves the first claim.
    
    For the second claim, if $H=K_2$ then $\mathrm{hom}(H;G_n)/|E(G_n)|=1$ by definition. 
    Otherwise, suppose $\frac{|E(G_n)|}{n^2}\geq c>0$ for all $n$. 
    Note that $\mathrm{hom}(H;G_n)\leq 2|E(G_n)|\cdot n^{|V(H)|-2}$, because the number of graph homomorphisms $V(H)\to V(G_n)$ mapping all edges of $H$ to edges of $G_n$ is upper-bounded by the number of maps $V(H)\to V(G_n)$ that send one particular edge in $H$ to an edge in $G_n$. 
    Since $H$ is connected and contains more than one edge, we have $|V(H)|<2|E(H)|$, in which case
    \begin{equation*}
        \frac{\mathrm{hom}(H;G_n)}{|E(G_n)|^{|E(H)|}} \leq \frac{2n^{|V(H)|-2}}{|E(G_n)|^{|E(H)|-1}}\leq \frac{2}{c^{|E(H)|-1}}\cdot n^{|V(H)|-2|E(H)|}\xrightarrow{n\to\infty}0.
    \end{equation*}
    This proves the quotient-convergence of $(G_n)$ by the first part of this corollary.
\end{proof}

We have now seen several examples and characterizations of quotient-convergent sequences. However, so far it is not clear to what limits such sequences converge. 
In the next section, we derive limits for quotient-convergent graphs in the form of random exchangeable measures on $[0,1]^2$.

\section{Limits of Quotient-Convergent Graph Sequences}\label{sec:limits}
This section is devoted to characterizing limits of quotient-convergent graph sequences and understanding their properties.
In Section~\ref{sec:characterizing_lims}, we prove Theorem~\ref{thm:limits_formal} characterizing limits of quotient-convergent graph sequences via grapheurs. In Section~\ref{sec:structure_of_lims} we interpret the different components of limiting grapheurs~\eqref{eq:kallenberg_char} in terms of combinatorial aspects of the sequence of graphs converging to them. 
In Section~\ref{sec:dual_graphons} we present a (random) duality pairing between grapheurs and graphons, and prove the formal version of Theorem~\ref{thm:graphon_duality_intro} given in Theorem~\ref{thm:graphon_duality}.
Finally, Section~\ref{sec:missing_proofs} contains some tedious but straightforward proofs omitted from Section~\ref{sec:characterizing_lims} for clarity of exposition.


\subsection{Characterizing Limits via Grapheurs}\label{sec:characterizing_lims}

To prove Theorem~\ref{thm:limits_formal}, we show that the random quotients of size $k$ of a finite graph $G$ are equal in distribution to the random graph $\msf G_k[\msf M_G]$ of size $k$ obtained in~\eqref{eq:graph_from_measure} from the grapheur $\msf M_G$ associated in~\eqref{eq:step_measure} to $G$. 
\begin{proposition}\label{prop:induced_random_graph}
    For any $n,k\in\NN$ and $G\in\Delta^{n\times n}$, we have $\rho(F_{k,n})G\overset{d}{=}\msf G_k[\msf M_G]$. 
\end{proposition}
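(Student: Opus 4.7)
The plan is to construct a natural coupling between the two random graphs by using the same uniform samples $T_1,\ldots,T_n$ that define $\msf M_G$ to also generate a random map $[n]\to[k]$, and then check that the two sides agree pointwise under this coupling.

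First, I would introduce the ``binning'' map $\phi\colon[0,1)\to[k]$ sending $t$ to the unique $i\in[k]$ with $t\in I_i^{(k)}=[(i-1)/k,i/k)$. Let $T_1,\ldots,T_n\overset{iid}{\sim}\mathrm{Unif}([0,1])$ be the samples from the definition~\eqref{eq:step_measure} of $\msf M_G$, and define the (random) map $F\colon[n]\to[k]$ by $F(v)=\phi(T_v)$. Since each $I_i^{(k)}$ has length $1/k$, the image $\phi(T_v)$ is uniform on $[k]$; since the $T_v$ are independent, so are the $\phi(T_v)$. Hence $F\overset{d}{=}F_{k,n}$.

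Next, I would unfold the definition of $\msf G_k[\msf M_G]$ entrywise. For any $i,j\in[k]$,
\begin{equation*}
    (\msf G_k[\msf M_G])_{i,j} \;=\; \msf M_G\!\left(I_i^{(k)}\!\times I_j^{(k)}\right) \;=\; \sum_{v,u=1}^n G_{v,u}\,\mathbbm{1}\!\left[T_v\in I_i^{(k)}\right]\mathbbm{1}\!\left[T_u\in I_j^{(k)}\right] \;=\; \sum_{v,u=1}^n G_{v,u}\,\mathbbm{1}[F(v)=i]\,\mathbbm{1}[F(u)=j],
\end{equation*}
which equals $[\rho(F)G]_{i,j}$ by the definition~\eqref{eq:quotient_graph}. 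Therefore $\msf G_k[\msf M_G]=\rho(F)G$ almost surely in this coupling, and combining with $F\overset{d}{=}F_{k,n}$ gives $\msf G_k[\msf M_G]\overset{d}{=}\rho(F_{k,n})G$, as required.

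There is no real obstacle here; the argument is essentially a definitional unfolding. The only mild bookkeeping issue is that the half-open intervals $I_i^{(k)}$ partition $[0,1)$ rather than $[0,1]$, so $\phi$ is not defined at $t=1$, but this is a null event for $T_v\sim\mathrm{Unif}([0,1])$ and may be handled by defining $\phi(1):=k$ (or simply noting that $\msf M_G$ is almost surely supported in $[0,1)^2$). One could alternatively verify the claim moment-by-moment using Proposition~\ref{prop:quotient-part1} and the definition~\eqref{eq:grapheur_density} of $t_Q(H;\msf M_G)$, but the coupling argument above is more transparent and directly yields an almost-sure identity rather than just equality in distribution.
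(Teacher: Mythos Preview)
Your proof is correct and follows essentially the same approach as the paper: both construct the coupling $F(v)=j$ iff $T_v\in I_j^{(k)}$, verify that $F\overset{d}{=}F_{k,n}$, and then check entrywise that $\msf G_k[\msf M_G]=\rho(F)G$ almost surely. The paper's argument is slightly more terse (it does not name the binning map $\phi$ explicitly), but the content is identical.
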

\begin{proof}
    Recall from~\eqref{eq:step_measure} that $\msf M_G = \sum_{i,j=1}^nG_{i,j}\delta_{(T_i,T_j)}$ where $T_1,\ldots,T_n$ are iid uniform on $[0,1]$. With probability one, for each $i\in[n]$ there is a unique $j_i\in[k]$ such that $T_i\in I_{j_i}^{(k)}$. Moreover, the index $j_i$ is uniformly distributed in $[k]$ by uniformity of $T_i$, and the indices $j_1,\ldots,j_n$ are independent of each other since the $T_i$ are independent. 
    Define the random map $F\colon[n]\to[k]$ sending $F(i)=j_i$ and observe that $F\overset{d}{=}F_{k,n}$ is uniformly random, since each $i\in[n]$ is mapped to an independent and uniformly random element of $[k]$. Further, note that
    \begin{equation*}
        (\msf G_k[\msf M_G])_{i,j} = \msf M_G(I_i^{(k)}\times I_j^{(k)}) = \sum_{u,v=1}^nG_{u,v}\mathbbm{1}[T_u\in I_i^{(k)}, T_v\in I_j^{(k)}] = \sum_{\substack{u\in F^{-1}(i) \\ v\in F^{-1}(j)}}G_{u,v} = (\rho(F)G)_{i,j}.
    \end{equation*}
    Hence, we conclude that $\msf G_k[\msf M_G] = \rho(F)G\overset{d}{=}\rho(F_{k,n})G$, as claimed.
\end{proof}
Consequently, we also have $t_Q(H;G)=t_Q(H;\msf M_G)$, where the latter is given by~\eqref{eq:grapheur_density}.
Proposition~\ref{prop:induced_random_graph} implies that a sequence $(G_n)$ is quotient-convergent if and only if the sequence of random graphs $(\msf G_k[\msf M_{G_n}])_n$ constructed from the associated grapheurs $(\msf M_{G_n})$ converge weakly for each $k$.
Since $\Delta^{k\times k}$ is compact, this is equivalent to convergence of $(\msf G_k[\msf M_{G_n}])_n$ in the Wasserstein-1 metric $W_1$ with respect to the entrywise $\ell_1$ norm on $\RR^{k\times k}$ for each $k$ (see Section~\ref{sec:notation}).
Motivated by this observation, we relate our $W_\square$-metric~\eqref{eq:dual_cut_metric} to this $W_1$-metric.
\begin{proposition}\label{prop:comparing_metrics}
    For any two grapheurs $\msf M,\msf N\in\mc M$ and any $k\in\NN$, we have
    \begin{equation*}
        W_1(\msf G_k[\msf M], \msf G_k[\msf N])\leq k^2W_{\square}(\msf M,\msf N)\quad \textrm{and } \quad W_{\square}(\msf M,\msf N)\leq W_1(\msf G_k[\msf M], \msf G_k[\msf N]) + \frac{4}{k}.
    \end{equation*}
\end{proposition}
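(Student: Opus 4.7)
The plan is to tackle the two inequalities separately, both resting on the observation that $\msf G_k[\cdot]$ is a deterministic ``grid-quotient'' that records the measure each of the $k^2$ grid squares $I_i^{(k)}\times I_j^{(k)}$ receives from a grapheur. The first direction pushes couplings of grapheurs forward to couplings of their grid-quotients, while the second lifts couplings backward and controls the resulting boundary error.

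For the first inequality, I would start with any coupling $(\msf M_1',\msf M_2')$ of $\msf M$ and $\msf N$ and apply the deterministic map $\msf G_k[\cdot]$ coordinatewise to obtain a coupling $(\msf G_k[\msf M_1'],\msf G_k[\msf M_2'])$ of $\msf G_k[\msf M]$ and $\msf G_k[\msf N]$. Its expected entrywise $\ell_1$ distance expands as a sum of $k^2$ terms of the form $\mbb E|\msf M_1'(I_i^{(k)}\times I_j^{(k)})-\msf M_2'(I_i^{(k)}\times I_j^{(k)})|$, each bounded by the supremum over interval pairs $(S,T)$ that appears in~\eqref{eq:dual_cut_metric}. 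Taking the infimum over couplings then yields $W_1(\msf G_k[\msf M],\msf G_k[\msf N])\leq k^2 W_\square(\msf M,\msf N)$.

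For the second inequality, I would lift a near-optimal coupling $(X,Y)$ of $\msf G_k[\msf M]$ and $\msf G_k[\msf N]$ to a coupling $(\msf M',\msf N')$ of grapheurs by conditional sampling: draw $(X,Y)$ from the given coupling, then sample $\msf M'$ from the regular conditional distribution of $\msf M$ given $\msf G_k[\msf M]=X$ and, conditionally independently, $\msf N'$ from that of $\msf N$ given $\msf G_k[\msf N]=Y$. This is well-defined on the Polish space of grapheurs and satisfies $\msf G_k[\msf M']=X$ and $\msf G_k[\msf N']=Y$ almost surely. For arbitrary intervals $S,T\subseteq[0,1]$, I would approximate them from the inside by the largest grid-aligned subsets $S_-=\bigcup\{I_i^{(k)}:I_i^{(k)}\subseteq S\}$ and $T_-$ defined analogously, so that $|S\setminus S_-|\leq 2/k$ and $|T\setminus T_-|\leq 2/k$. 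Writing $R=(S\times T)\setminus(S_-\times T_-)\subseteq((S\setminus S_-)\times[0,1])\cup([0,1]\times(T\setminus T_-))$, I would split
\begin{equation*}
\msf M'(S\times T)-\msf N'(S\times T)=\bigl[\msf M'(S_-\times T_-)-\msf N'(S_-\times T_-)\bigr]+\bigl[\msf M'(R)-\msf N'(R)\bigr].
\end{equation*}
The grid-aligned bracket is a signed sum of $(X_{i,j}-Y_{i,j})$ over indices with $I_i^{(k)}\subseteq S$ and $I_j^{(k)}\subseteq T$, so its expected absolute value is at most $\mbb E\|X-Y\|_1$.

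For the boundary bracket, I would establish the identity $\mbb E\msf M(A\times[0,1])=|A|$ for every $\msf M\in\mc M$ and measurable $A\subseteq[0,1]$, verified by evaluating each summand of~\eqref{eq:kallenberg_char} using $T_i\sim\mathrm{Unif}([0,1])$ and the normalization $\sum E_{i,j}+\sum(\sigma_i+\varsigma_i)+\theta+\vartheta=1$. This yields $\mbb E\msf M'(R)\leq|S\setminus S_-|+|T\setminus T_-|\leq 4/k$, and likewise for $\msf N'$. The main subtlety I anticipate is the precise constant: using $|\msf M'(R)-\msf N'(R)|\leq\msf M'(R)+\msf N'(R)$ naively doubles the boundary error, so attaining the stated $4/k$ requires a more delicate pairing, for instance comparing an inner approximation for one grapheur against an outer approximation for the other so that the extreme boundary contributions partially cancel. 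Once this is handled, taking the infimum over $(X,Y)$ in $\mbb E\|X-Y\|_1$ completes the proof.
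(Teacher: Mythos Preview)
Your proposal is correct and follows essentially the same route as the paper. The first inequality is argued identically. For the second, the paper likewise lifts a coupling of $(\msf G_k[\msf M],\msf G_k[\msf N])$ to a coupling of the grapheurs via disintegration (packaged separately as Lemma~\ref{lem:graph_couplings}, which is exactly your conditional-sampling construction), approximates $S,T$ by grid-aligned sets (outer rather than your inner approximation, which makes no difference), and bounds the boundary error using $\mbb E\msf M=\theta\lambda^2+(1-\theta)\lambda_D$ (Lemma~\ref{lem:means}); your marginal identity $\mbb E\msf M(A\times[0,1])=|A|$ is the special case for strips.

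The constant subtlety you flag is genuine: the triangle inequality on the boundary term yields $\mbb E\msf M'(R)+\mbb E\msf N'(R)\leq 8/k$, and the paper's proof does not supply any sharper pairing either---it simply writes $4/k$ after bounding $\lambda^2$ and $\lambda_D$ of the boundary set by $4/k$ and $2\sqrt{2}/k$. So your concern is well-founded, but it applies equally to the paper; with $8/k$ in place of $4/k$ your argument is complete, and since the proposition is only used qualitatively (to show $W_\square$ and the $W_1$ distances on $\msf G_k$ induce the same topology), the discrepancy is immaterial.
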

The proof is straightforward but somewhat technical, as it is based on approximating any rectangle appearing in our expression~\eqref{eq:dual_cut_metric} for $W_{\square}$ by squares in a uniform grid. We defer the proof to Section~\ref{sec:missing_proofs}.
As an immediate consequence, we conclude that a sequence of graphs $(G_n)$ is quotient-convergent if and only if the associated sequence of grapheurs $(\msf M_{G_n})$ is Cauchy in the $W_\Box$-metric~\eqref{eq:dual_cut_metric}.  
We now observe that the metric space $(\mc M,W_{\square})$ is compact, because the metric $W_{\square}$ metrizes weak convergence in $\mc M$ and $\mc M$ is weakly compact; see Section~\ref{sec:notation} for a review of weak convergence.
\begin{proposition}\label{prop:M0_compact} 
    The space $\mc M$ of grapheurs~\eqref{eq:grapheur_space} is weakly compact. In particular, any Cauchy sequence $(\msf M_n)\subseteq\mc M$ has a limit $\msf M\in\mc M$.  Moreover, the following statements are equivalent for $(\msf M_n)\subseteq\mc M$ and $\msf M\in\mc M$.
    \begin{enumerate}
        \item We have $\lim_n\msf M_n=\msf M$ weakly.  
        \item We have that $\lim_nW_{\square}(\msf M_n,\msf M)=0$. 
        \item For each $k\in\NN$, we have that $\lim_n \msf G_k[\msf M_n]=\msf G_k[\msf M]$ weakly.
        \item For each multigraph $H\in\NN^{k\times k}$, we have $\lim_nt_Q(H;\msf M_n)=t_Q(H;\msf M)$.
    \end{enumerate}
\end{proposition}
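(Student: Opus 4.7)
The plan is to establish the chain $(1)\Leftrightarrow(3)\Leftrightarrow(4)\Leftrightarrow(2)$ and then deduce compactness via Kallenberg's parameterization. For $(3)\Leftrightarrow(4)$, since $\msf G_k[\msf M_n]$ is a random element of the compact set $\Delta^{k\times k}$, weak convergence is equivalent to convergence of all polynomial moments, and the moment against the monomial $X^H=\prod_{i,j}X_{i,j}^{H_{i,j}}$ equals $t_Q(H;\msf M_n)$ by~\eqref{eq:grapheur_density}; this runs identically to the argument in Proposition~\ref{prop:quotient-part1}. For $(1)\Rightarrow(3)$, I would invoke the continuous mapping theorem applied to the map $\mu\mapsto(\mu(I_i^{(k)}\times I_j^{(k)}))_{i,j\in[k]}$. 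By the Portmanteau theorem this evaluation is continuous at any $\mu\in\mc P([0,1]^2)$ assigning zero mass to the rectangle boundaries, and every summand in the Kallenberg decomposition~\eqref{eq:kallenberg_char} has this property almost surely: the random atoms at $(T_i,T_j)$ avoid the dyadic grid with probability one since the $T_i$ are uniform, and each of $\lambda,\lambda^2,\lambda_D$ vanishes on one-dimensional subsets of the square. The converse $(3)\Rightarrow(1)$ follows by uniformly approximating any $f\in C([0,1]^2)$ by a step function $f_k$ constant on dyadic squares (using uniform continuity on the compact square); the integral $\mbb E_{\msf M_n}f_k$ is a linear combination of the entries of $\msf G_k[\msf M_n]$, so its weak convergence as a scalar random variable follows from $(3)$, and sending $k\to\infty$ recovers weak convergence of $\mbb E_{\msf M_n}f$ for every continuous $f$.

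For $(2)\Leftrightarrow(3)$ I plan to invoke Proposition~\ref{prop:comparing_metrics} directly: the inequality $W_1(\msf G_k[\msf M_n],\msf G_k[\msf M])\leq k^2W_\square(\msf M_n,\msf M)$ yields $(2)\Rightarrow(3)$, while $W_\square(\msf M_n,\msf M)\leq W_1(\msf G_k[\msf M_n],\msf G_k[\msf M])+4/k$ yields $(3)\Rightarrow(2)$ upon sending $k$ to infinity after $n$ to infinity. Once weak compactness is in hand, the ``in particular'' clause follows quickly: any $W_\square$-Cauchy sequence admits a weakly convergent subsequence with limit in $\mc M$ by compactness, and the equivalence $(1)\Leftrightarrow(2)$ upgrades this to convergence of the full Cauchy sequence in $W_\square$ to that same limit.

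The remaining piece is weak compactness of $\mc M$, which I expect to be the main obstacle. Given $(\msf M_n)\subseteq\mc M$, I would write each $\msf M_n$ using its deterministic Kallenberg parameters $(E_n,\sigma_n,\varsigma_n,\theta_n,\vartheta_n)$, rearranged canonically (for instance, $\sigma_n$ and $\varsigma_n$ in decreasing order with rows and columns of $E_n$ permuted compatibly, exploiting the ambiguities noted after Definition~\ref{def:M0}). Each parameter list has $\ell^1$ norm equal to one, so a diagonal Bolzano--Weierstrass argument extracts a subsequence along which every scalar entry converges to a nonnegative limit $(E_\infty,\sigma_\infty,\varsigma_\infty,\theta_\infty,\vartheta_\infty)$, possibly with total mass strictly less than one. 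The difficult step is rigorously tracking where the lost mass goes: entries of $E_n,\sigma_n,\varsigma_n$ whose individual magnitudes vanish but whose cumulative mass remains positive effectively smear into the uniform component $\lambda^2$ (off-diagonal contributions) or into $\lambda_D$ (vanishing diagonal contributions of $E_n^{aa}$). I would make this rigorous by computing $t_Q(H;\msf M_n)$ for each multigraph $H$ directly from the Kallenberg expansion and verifying that the limit equals $t_Q(H;\msf N)$ for the grapheur $\msf N\in\mc M$ whose parameters are $(E_\infty,\sigma_\infty,\varsigma_\infty,\theta',\vartheta')$ with $\theta',\vartheta'$ absorbing the respective mass deficits. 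The established implication $(4)\Rightarrow(1)$ then yields $\msf M_n\to\msf N$ weakly in $\mc M$, giving the desired sequential compactness.
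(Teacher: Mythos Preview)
Your equivalences $(2)\Leftrightarrow(3)\Leftrightarrow(4)$ match the paper, and your direct argument for $(1)\Leftrightarrow(3)$ via the continuous mapping theorem and step-function approximation is a valid alternative to the paper's route, which instead introduces an auxiliary Wasserstein metric $\mathrm{dist}_{W_1}$ metrizing weak convergence on $\mc M$ and compares it to $W_\square$ by approximating rectangles with Lipschitz functions and vice versa.

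The compactness argument, however, has a genuine gap. Your mass-tracking claim---that entrywise-vanishing contributions from $(E_n,\sigma_n,\varsigma_n)$ smear only into $\theta'$ or $\vartheta'$---is wrong as stated. Mass from a row of $E_n$ whose individual entries vanish but whose row-sum persists must flow into the corresponding entry of $\sigma$, not into $\theta$. Take $\msf M_n$ to be the grapheur of the directed $n$-star (Example~\ref{ex:stars}): $E_n[1,j]=1/n$ for $j=2,\ldots,n+1$ and all other parameters zero. Under any canonical ordering keeping the hub at position~$1$, every fixed entry $E_n[1,j]\to 0$ while $\sigma_n\equiv 0$, so your procedure produces all-zero limit parameters with the full mass deficit absorbed into $\theta'=1$. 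But the actual weak limit has $\sigma_1=1$, and these two grapheurs have different quotient densities (for instance for the two-leaf out-star of Example~\ref{ex:hub_test}), so your $t_Q$-verification step would fail rather than merely require more bookkeeping. The paper avoids parameter extraction entirely: it characterizes $\mc M$ as the set of random exchangeable measures assigning probability $0$ or $1$ to every set invariant under the countable group $\widetilde{\msf S}_\infty$ of interval permutations, and argues via Lemma~\ref{lem:weak_conv_measure_01_sets} that this $0$--$1$ property is closed under weak limits, so $\mc M$ is closed in the compact ambient space $\mc P(\mc P([0,1]^2))$. If you want to salvage your approach, you need a hierarchical scheme that first extracts limits of row- and column-sums of $E_n$ (feeding $\sigma_\infty,\varsigma_\infty$) before pushing any residual mass to $\theta',\vartheta'$.
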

In particular, we have $\lim_nW_{\square}(\msf M_{G_n},\msf M)=0$ for a sequence $(G_n\in\Delta^{n\times n})$ if and only if $\lim_nt_Q(H;G_n)=t_Q(H;\msf M)$ for all multigraphs $H\in\NN^{k\times k}$, as can be seen by combining Proposition~\ref{prop:induced_random_graph} and Proposition~\ref{prop:M0_compact}(4). 
Once again, the proof is straightforward but involved, and we defer it to Section~\ref{sec:missing_proofs}.
Combining the preceding propositions, we are ready to prove Theorem~\ref{thm:limits_formal}.
\begin{proof}[Proof (Theorem~\ref{thm:limits_formal}).]
    To prove the first statement, note that the sequence $(G_n)$ is quotient-convergent if and only if $(\msf G_k[\msf M_{G_n}])$ is Cauchy in $W_1$ for each $k\in\NN$ by Definition~\ref{def:quotient_convergence}. By Proposition~\ref{prop:comparing_metrics}, this is equivalent to $(\msf M_{G_n})$ being Cauchy in the $W_{\square}$-metric, as we remark previously. By Proposition~\ref{prop:M0_compact}, this is further equivalent to the existence of a limit $\msf M\in\mc M$ for $(\msf M_{G_n})$ in this metric, thus proving the first statement.

    The second statement follows from Proposition~\ref{prop:M0_compact}.

    Finally, we prove the third statement by explicitly constructing a convergent sequence $(G_n)$ of graphs for each grapheur $\msf M\in\mc M$. Suppose $\msf M$ is given by~\eqref{eq:kallenberg_char} with $E,\sigma,\varsigma,\theta,\vartheta\geq0$ and iid uniform $(T_i)_{i\in\NN}$. 
    Define the sequence of graphs $(G_n\in\Delta^{n\times n})$ with entries
    \begin{equation}\label{eq:particular_sequence}
        (G_n)_{i,j} = \frac{1}{s_n}\left(E_{i,j} + \tfrac{1}{n}\sigma_i + \tfrac{1}{n}\varsigma_j + \tfrac{1}{n^2}\theta+\tfrac{1}{n}\vartheta\mathbbm{1}_{i=j}\right),\qquad s_n=\sum_{i,j=1}^nE_{i,j} + \sum_{i=1}^n(\sigma_i+\varsigma_i) + \theta+\vartheta,
    \end{equation}
    where we note that $s_n\nearrow1$ and we may assume $s_n>0$ for all $n$ by reordering the rows and columns of $E,\sigma,\varsigma$ if needed. 
    Each summand in~\eqref{eq:particular_sequence} quotient-converges to the corresponding summand in~\eqref{eq:kallenberg_char} as shown by combining Proposition~\ref{prop:M0_compact} with the examples of Section~\ref{sec:hom_convergence}. 
    Moreover, by Example~\ref{ex:combinations} the convex combination of these summands in~\eqref{eq:particular_sequence} converges to the convex combination of the summands in~\eqref{eq:kallenberg_char}, which is precisely the grapheur $\msf M$.
    Thus, the third statement in Theorem~\ref{thm:limits_formal} is proved.
\end{proof}
In particular, the sequence of limits of random quotients of a convergent graph sequence fully determines its limit.
\begin{corollary}\label{cor:homs_determine_limits}
    Two quotient-convergent sequences $(G_n^{(1)})$ and $(G_n^{(2)})$ have the same limit if and only if $\lim_n\rho(F_{k,n})G_n^{(1)}\overset{d}{=}\lim_n\rho(F_{k,n})G_n^{(2)}$ for all $k\in\NN$.
\end{corollary}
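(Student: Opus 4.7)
The plan is to assemble the corollary from pieces already in hand, most importantly Proposition~\ref{prop:induced_random_graph}, Proposition~\ref{prop:M0_compact}, and Theorem~\ref{thm:limits_formal}. Since both sequences are assumed to be quotient-convergent, Theorem~\ref{thm:limits_formal}(1) supplies grapheurs $\msf M^{(1)}, \msf M^{(2)} \in \mc M$ with $\lim_n W_{\square}(\msf M_{G_n^{(i)}}, \msf M^{(i)}) = 0$. The statement to prove then translates to: $\msf M^{(1)} = \msf M^{(2)}$ if and only if $\lim_n \rho(F_{k,n}) G_n^{(1)} \overset{d}{=} \lim_n \rho(F_{k,n}) G_n^{(2)}$ for every $k \in \NN$.

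First I would identify the limits of the random quotients. By Proposition~\ref{prop:induced_random_graph} we have $\rho(F_{k,n}) G_n^{(i)} \overset{d}{=} \msf G_k[\msf M_{G_n^{(i)}}]$ for each $n, k, i$, and by the equivalence $(1)\Leftrightarrow(3)$ in Proposition~\ref{prop:M0_compact}, the weak convergence $\msf M_{G_n^{(i)}} \to \msf M^{(i)}$ in $\mc M$ forces $\msf G_k[\msf M_{G_n^{(i)}}] \to \msf G_k[\msf M^{(i)}]$ weakly for each fixed $k$. Thus the weak limit of $\rho(F_{k,n}) G_n^{(i)}$ is exactly $\msf G_k[\msf M^{(i)}]$, and the corollary reduces to: $\msf M^{(1)} = \msf M^{(2)}$ if and only if $\msf G_k[\msf M^{(1)}] \overset{d}{=} \msf G_k[\msf M^{(2)}]$ for every $k \in \NN$.

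The forward implication is immediate. For the reverse, I would pass to quotient densities: equality in distribution $\msf G_k[\msf M^{(1)}] \overset{d}{=} \msf G_k[\msf M^{(2)}]$ implies equality of all polynomial moments on $\Delta^{k\times k}$, and evaluating on the monomial $X^H$ for $H \in \NN^{k\times k}$ gives $t_Q(H; \msf M^{(1)}) = t_Q(H; \msf M^{(2)})$ by the definition~\eqref{eq:grapheur_density}. Since this holds for every $k$ and every multigraph $H$, I apply the equivalence $(1)\Leftrightarrow(4)$ in Proposition~\ref{prop:M0_compact} to the constant sequence $\msf M_n \equiv \msf M^{(1)}$, which then converges weakly to $\msf M^{(2)}$ as well, and uniqueness of weak limits in the Hausdorff space $\mc M$ yields $\msf M^{(1)} = \msf M^{(2)}$.

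I do not anticipate a genuine obstacle here; the corollary is a direct assembly of previously established equivalences. The only subtle point is recognizing that once the random quotients of a grapheur are determined at every scale $k$, the quotient densities and therefore the grapheur itself are uniquely pinned down, which is exactly what Proposition~\ref{prop:M0_compact}(4) encodes.
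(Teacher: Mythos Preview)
Your proof is correct and follows essentially the same approach as the paper's, which also invokes Proposition~\ref{prop:M0_compact} (together with Proposition~\ref{prop:induced_random_graph}) to identify $\lim_n\rho(F_{k,n})G_n^{(i)}$ with $\msf G_k[\msf M^{(i)}]$ and conclude that equality of these for all $k$ pins down the grapheur. The only cosmetic difference is that the paper appeals directly to item~(3) of Proposition~\ref{prop:M0_compact}, whereas you route the reverse implication through quotient densities via item~(4); both are equivalent characterizations from the same proposition.
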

\begin{proof}
    From Proposition~\ref{prop:M0_compact}, the two sequences have the same limit $\msf M$ if and only if $\lim_n\rho(F_{k,n})G_n^{(i)}\overset{d}{=}\msf G_k[\msf M]$ for both $i=1,2$.
\end{proof}

We end this section by noting that valid polynomial inequalities in quotient densities precisely correspond to valid inequalities over grapheurs. We hope that grapheurs can be used to study such inequalities in future work, dually to the use of graphons in the study of inequalities in homomorphism densities~\cite[\S16]{lovasz2012large}.
\begin{corollary}[Inequalities in quotient densities]\label{cor:inequalities_in_quotients}
    Suppose $H_1,\ldots,H_k$ are multigraphs and $f\colon\RR^k\to\RR$ is a continuous function. We have $f(t_Q(H_1;G),\ldots,t_Q(H_k;G))\geq 0$ for all $G\in\Delta^{n\times n}$ and all $n\in\NN$ if and only if $f(t_Q(H_1;\msf M),\ldots,t_Q(H_k;\msf M))\geq 0$ for all grapheurs $\msf M\in\mc M$.
\end{corollary}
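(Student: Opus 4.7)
The plan is to prove the two directions separately, leveraging Proposition~\ref{prop:induced_random_graph}, Theorem~\ref{thm:limits_formal}, and Proposition~\ref{prop:M0_compact}. Both directions essentially amount to observing that finite graphs form a ``dense'' subset of grapheurs under quotient-density evaluation, in a sense made precise by the approximation result in Theorem~\ref{thm:limits_formal}(3) and by continuity of the quotient densities in Proposition~\ref{prop:M0_compact}(4).

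For the ``if'' direction (grapheur inequality implies graph inequality), the plan is to observe that every finite graph $G \in \Delta^{n \times n}$ has an associated grapheur $\msf M_G \in \mc M$ via~\eqref{eq:step_measure}. By Proposition~\ref{prop:induced_random_graph} we have $\rho(F_{k,n})G \overset{d}{=} \msf G_k[\msf M_G]$, and taking expectations of monomials in the entries yields $t_Q(H; G) = t_Q(H; \msf M_G)$ for every multigraph $H$. Hence if the inequality $f(t_Q(H_1; \msf M), \ldots, t_Q(H_k; \msf M)) \geq 0$ holds for all $\msf M \in \mc M$, applying it to $\msf M = \msf M_G$ yields the corresponding graph inequality.

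For the ``only if'' direction (graph inequality implies grapheur inequality), fix any grapheur $\msf M \in \mc M$. By Theorem~\ref{thm:limits_formal}(3), there exists a sequence of finite graphs $(G_n \in \Delta^{n \times n})$ with $\lim_n W_{\square}(\msf M_{G_n}, \msf M) = 0$. By Proposition~\ref{prop:M0_compact}, this convergence in the $W_{\square}$-metric is equivalent to the convergence $\lim_n t_Q(H_i; \msf M_{G_n}) = t_Q(H_i; \msf M)$ for each $i \in [k]$. Combining with the identity $t_Q(H_i; G_n) = t_Q(H_i; \msf M_{G_n})$ from the previous paragraph, and using the continuity of $f$ to pass to the limit in the inequality $f(t_Q(H_1; G_n), \ldots, t_Q(H_k; G_n)) \geq 0$, yields the desired conclusion.

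There is no substantial obstacle here: the corollary is a direct packaging of the density and continuity statements already established. If anything is worth being careful about, it is only the bookkeeping needed to identify $t_Q(H; G)$ with $t_Q(H; \msf M_G)$ for finite graphs (so that the ``test functions'' $t_Q(H_i; \cdot)$ really are continuous on a compact space $\mc M$ containing the finite-graph grapheurs as a dense subset in the $W_{\square}$-topology), both of which are already in hand.
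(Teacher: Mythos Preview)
Your proposal is correct and follows essentially the same approach as the paper's proof: the ``if'' direction uses the identity $t_Q(H;G)=t_Q(H;\msf M_G)$ to specialize the grapheur inequality to $\msf M=\msf M_G$, and the ``only if'' direction approximates an arbitrary grapheur $\msf M$ by finite graphs (Theorem~\ref{thm:limits_formal}(3)), uses convergence of quotient densities (Proposition~\ref{prop:M0_compact}(4)), and passes to the limit via continuity of $f$. The paper's argument is identical in structure, just stated more tersely.
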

\begin{proof}
    Since $t_Q(H;G)=t_Q(H;\msf M_G)$, if $f(t_Q(H_1;\msf M),\ldots,t_Q(H_k;\msf M))\geq 0$ for all grapheurs $\msf M\in\mc M$ then setting $\msf M=\msf M_G$ for $G\in\Delta^{n\times n}$ yields $f(t_Q(H_1;G),\ldots,t_Q(H_k;G))\geq 0$. Conversely, suppose $f(t_Q(H_1;G),\ldots,\allowbreak t_Q(H_k;G))\geq 0$ for all $G\in\Delta^{n\times n}$ and all $n\in\NN$. For any $\msf M\in\mc M$, let $(G_n)$ be a sequence of finite graphs that quotient-converges to $\msf M$. Then $t_Q(H;G_n)\to t_Q(H;\msf M)$ for all multigraphs $H$, hence by continuity of $f$ we have $f(t_Q(H_1;\msf M),\ldots,t_Q(H_k;\msf M))=\lim_nf(t_Q(H_1;G_n),\ldots,t_Q(H_k;G_n))\geq0$.
\end{proof}
\subsection{Structure of Grapheurs}\label{sec:structure_of_lims}
Theorem~\ref{thm:limits_formal} shows that limits of quotient-convergent sequences correspond to grapheurs in $\mc M$. Each such grapheur is described by a tuple $(E,\sigma,\varsigma,\theta,\vartheta)$ as in~\eqref{eq:kallenberg_char}, and the goal of this section is to interpret each element of the tuple in terms of the combinatorics of the convergent sequence of graphs. 
Informally, the five parameters describing quotient limits correspond to the following properties.
\begin{itemize}
    \item The array $E\in\RR_{\geq 0}^{\NN\times \NN}$ describes a discrete collection of hubs and the fraction of edge weight between them. Specifically, each entry corresponds to an edge in a sequence of graphs that carries an asymptotically positive fraction of the total edge weight.

    \item The arrays $\sigma,\varsigma\in\RR_{\geq 0}^{\NN}$ specify directed stars. Specifically, each entry corresponds to the fraction of in-degree or out-degree, respectively, of a vertex that comes from all edges with asymptotically vanishing edge weights.

    \item The parameters $\theta$ and $\vartheta$ capture the remaining edge weight in the graph, coming from all edges between vertices with asymptotically vanishing degrees and their self-loops, respectively.
\end{itemize}
These properties can be seen in all the examples of Section~\ref{sec:hom_convergence}. The following proposition proves these properties in general.
\begin{proposition}\label{prop:structure_result}
    Suppose $(G_n \subseteq \Delta^{n \times n})$ quotient-converges to a grapheur $\msf M$ that is described by the tuple $(E,\sigma,\varsigma,\theta,\vartheta)$ as in~\eqref{eq:kallenberg_char}. View each $G_n$ as an element of $\RR^{\NN\times\NN}$ by zero-padding it with infinitely-many rows and columns. Then there exist permutations $\pi_n\in\msf S_{\infty}$ of $\NN$ fixing all but finitely-many integers such that $\lim_n\|E-\pi_nG_n\|_{\infty}=0$, $\lim_n\|E\mathbbm{1}+\sigma-\pi_nG_n\mathbbm{1}\|_{\infty}=0$, and $\lim_n\|E^\top\mathbbm{1}+\varsigma-\pi_nG_n^\top\mathbbm{1}\|_{\infty}=0$. Furthermore, we have $\vartheta=\lim_n\mathrm{Tr}(G_n)-\mathrm{Tr}(E)$.
\end{proposition}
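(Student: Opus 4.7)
The plan is to leverage the $W_{\square}$-convergence $\lim_n W_{\square}(\msf M_{G_n}, \msf M) = 0$ from Theorem~\ref{thm:limits_formal}, together with the essential uniqueness of the Kallenberg decomposition~\eqref{eq:kallenberg_char}, to recover the parameters $(E, \sigma, \varsigma, \theta, \vartheta)$ from the finite graphs $G_n$ after suitable relabeling. The starting point is the interpretation of each parameter as a geometric feature of $\msf M$: in the Kallenberg representation with positions $T_1, T_2, \ldots \overset{iid}{\sim} \mathrm{Unif}([0,1])$, the atom weight of $\msf M$ at $(T_i, T_j)$ equals $E_{i,j}$; the mass on the vertical line $\{T_i\} \times [0,1]$ equals $(E\mathbbm{1})_i + \sigma_i$; the mass on the horizontal line $[0,1] \times \{T_i\}$ equals $(E^\top \mathbbm{1})_i + \varsigma_i$; and the mass on the diagonal $D = \{(x,x) : x \in [0,1]\}$ equals $\mathrm{Tr}(E) + \vartheta$. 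The analogous identities hold for $\msf M_{G_n}$, with $E_{i,j}$ replaced by $(G_n)_{u,v}$, row and column sums by $(G_n \mathbbm{1})_u$ and $(G_n^\top \mathbbm{1})_u$, and the diagonal mass by $\mathrm{Tr}(G_n)$.

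I would first invoke the infimum in~\eqref{eq:dual_cut_metric} to select couplings $(\msf M_n', \msf M_n'')$ with $\msf M_n' \overset{d}{=} \msf M_{G_n}$ and $\msf M_n'' \overset{d}{=} \msf M$, satisfying $\sup_{S, T} \mbb E|\msf M_n'(S \times T) - \msf M_n''(S \times T)| = W_{\square}(\msf M_{G_n}, \msf M) + o(1)$. Under the coupling, the atoms of $\msf M_n''$ are at random positions $(T^{(n)}_i, T^{(n)}_j)$ with weights $E_{i,j}$, and those of $\msf M_n'$ are at $(S^{(n)}_u, S^{(n)}_v)$ with weights $(G_n)_{u,v}$. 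I would then define $\pi_n \in \msf S_{\infty}$ by a greedy matching: for each threshold $\epsilon > 0$, the finite set of indices $i$ with $(E\mathbbm{1})_i + \sigma_i + (E^\top \mathbbm{1})_i + \varsigma_i > \epsilon$ indexes heavy vertical and horizontal lines of $\msf M_n''$, which by the coupling are well-approximated by an equal number of heavy vertical and horizontal lines of $\msf M_n'$ at positions $S^{(n)}_u$. Matching these in decreasing order of mass induces a vertex matching; since the same sequence $(T^{(n)}_i)$ indexes both row and column positions in the Kallenberg decomposition, a single permutation simultaneously matches rows and columns of $G_n$ to those of $E$. Taking $\epsilon \to 0$ along a diagonal subsequence yields the full permutation $\pi_n$, and the essential uniqueness of Kallenberg's decomposition (up to permutation of indices and adjoining of zero rows/columns) ensures the matching is consistent. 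Entrywise convergence $(G_n)_{\pi_n^{-1}(i), \pi_n^{-1}(j)} \to E_{i,j}$ then follows by evaluating the $W_{\square}$ bound on small squares $[T^{(n)}_i - \delta, T^{(n)}_i + \delta] \times [T^{(n)}_j - \delta, T^{(n)}_j + \delta]$ and sending $\delta \to 0$ after $n \to \infty$; analogously, applying the bound to vertical and horizontal strips around $T^{(n)}_i$ gives convergence of row and column sums.

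The main obstacle is the diagonal claim $\vartheta = \lim_n \mathrm{Tr}(G_n) - \mathrm{Tr}(E)$, because $D$ has Lebesgue measure zero in $[0,1]^2$ and the $W_{\square}$-metric only directly controls masses of two-dimensional rectangles. My plan is to tile the diagonal by small axis-aligned squares $D^{(N)}_\ell = [(\ell-1)/N, \ell/N]^2$ for $\ell \in [N]$ and estimate $\sum_{\ell} [\msf M_n'(D^{(N)}_\ell) - \msf M_n''(D^{(N)}_\ell)]$ using the $W_{\square}$ bound applied to each square. For the limiting grapheur, $\sum_\ell \msf M_n''(D^{(N)}_\ell) \to \mathrm{Tr}(E) + \vartheta$ as $N \to \infty$: the off-diagonal atoms $(i \neq j)$ eventually lie outside all $D^{(N)}_\ell$ since the $T_i$ are almost surely distinct, the contributions of $\sigma_i, \varsigma_i$ to each $D^{(N)}_\ell$ are of order $1/N$ per square but supported on the single square containing $T_i$, and $\theta$ contributes $1/N^2$ per square summing to $\theta/N$ overall. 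Analogously, $\sum_\ell \msf M_n'(D^{(N)}_\ell) = \mathrm{Tr}(G_n) + o_N(1)$ by applying the same argument to the finite graph $\msf M_{G_n}$. Combining these with the coupling bound and sending $n \to \infty$ before $N \to \infty$ yields the diagonal identity. A secondary subtlety is ensuring the greedy matching is stable under the randomness of the $T^{(n)}_i$; this can be handled by initially restricting to the finite set of hubs above threshold $\epsilon$ (where mass comparison pins down the matching uniquely for generic $E, \sigma, \varsigma$ values) and then letting $\epsilon \to 0$.
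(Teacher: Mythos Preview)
Your diagonal argument for $\vartheta=\lim_n\mathrm{Tr}(G_n)-\mathrm{Tr}(E)$ is actually sound and in fact simpler than you present it: since $\mbb E\msf M_{G_n}=(1-\mathrm{Tr}(G_n))\lambda^2+\mathrm{Tr}(G_n)\lambda_D$ and $\mbb E\msf M=(1-\mathrm{Tr}(E)-\vartheta)\lambda^2+(\mathrm{Tr}(E)+\vartheta)\lambda_D$ by Lemma~\ref{lem:means}, a single fixed square such as $[0,1/2]^2$ already suffices in the $W_{\square}$ bound. The paper instead observes that $\mathrm{Tr}(G_n)=\mathrm{hom}([1];G_n)$ and invokes convergence of homomorphism numbers together with the explicit convergent sequence~\eqref{eq:particular_sequence}.

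The genuine gap is in your treatment of the first three claims. The $W_{\square}$ metric~\eqref{eq:dual_cut_metric} controls $\sup_{S,T}\mbb E|\msf M_n'(S\times T)-\msf M_n''(S\times T)|$ where the supremum is over \emph{deterministic} intervals and the expectation is over the coupling. The squares $[T_i^{(n)}-\delta,T_i^{(n)}+\delta]^2$ you propose to use are \emph{random}, measurable with respect to $\msf M_n''$ itself, so the $W_{\square}$ bound does not apply to them; and even for fixed rectangles you only get a bound in expectation, which does not by itself pin down a deterministic permutation $\pi_n$ realizing entrywise matching of atoms. Your greedy line-matching only sees the sorted degree sequences $(G_n\mathbbm{1})$ and $(G_n^\top\mathbbm{1})$ separately; showing that a \emph{single} permutation simultaneously aligns entries, row sums, and column sums requires pointwise (not expected) control on rectangles that isolate individual atoms.

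The paper obtains exactly this pointwise control via the edge-sampling machinery of Section~\ref{sec:szemeredy}. It passes through the intermediate random graphs $\msf G^{(k)}[\msf M_{G_n}]$ and $\msf G^{(k)}[\msf M]$, couples them via Lemma~\ref{lem:empirical_coupling}, and uses the VC bound of Lemma~\ref{lem:VC_of_AAR} to get a \emph{realization} in which $\sup_{R\in\mc{AAR}}|\mu_1(R)-\mu_2(R)|\leq\epsilon$ holds simultaneously for all axis-aligned rectangles. With this uniform pointwise control in hand, rectangles around the (now fixed, in the realization) atom locations legitimately isolate individual entries and row/column sums, yielding the permutation $\pi_n$.
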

Proposition~\ref{prop:structure_result} justifies the intuition outlined above: Up to a relabelling of the vertices of $G_n$, the entries of $E$ are entrywise limits of the edge weights of $G_n$, the entries of $\sigma,\varsigma$ are limits of the remaining degrees of vertices in $(G_n)$ after removing edges with positive limiting weights, and the parameters $\vartheta,\theta$ capture the remaining edge weights in self loops and the rest of the edges, respectively. 
We prove Proposition~\ref{prop:structure_result} in Section~\ref{sec:missing_proofs_sampling} using an edge-sampling perspective on quotient convergence and an edge-based Szemer\'edi regularity lemma.
An immediate consequence of Proposition~\ref{prop:structure_result} is that all dense and degree-bounded sequences of simple graphs have the same limit.
\begin{corollary}\label{cor:undirected_graphs}
    Suppose $(G_n\in\mbb S^n_{\geq 0})$ is a sequence of undirected graphs such that the sequence $(G_n/\mathbbm{1}^\top G_n\mathbbm{1})$ quotient-converges to a grapheur described by $(E,\sigma,\varsigma,\theta,\vartheta)$.
    \begin{enumerate}
        \item We have $\sigma=\varsigma$ and $E^\top = E$.
        
        \item If each $G_n$ has no self-loops, then $\diag(E)=\vartheta=0$.

        \item If $G_n\in\mbb S^n_{\mathrm{sim}}$ is a sequence of simple graphs with $|E(G_n)|\to\infty$, then $E=0$. If $(G_n)$ is either dense or degree-bounded, then the sequence $(\frac{G_n}{2|E(G_n)|})$ quotient converges to the grapheur $\lambda^2$.

    \end{enumerate}
\end{corollary}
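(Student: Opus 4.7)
The plan is to apply Proposition~\ref{prop:structure_result} throughout, which gives us direct access to the parameters $(E,\sigma,\varsigma,\theta,\vartheta)$ of the limit grapheur in terms of entrywise limits of the normalized adjacency matrices $\widetilde G_n = G_n/(\mathbbm{1}^\top G_n\mathbbm{1})$ (after relabeling).

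For the first claim, fix permutations $\pi_n\in\msf S_\infty$ as in Proposition~\ref{prop:structure_result}, so that $E=\lim_n\pi_n\widetilde G_n$ entrywise, $E\mathbbm{1}+\sigma=\lim_n\pi_n\widetilde G_n\mathbbm{1}$ in $\ell_\infty$, and $E^\top\mathbbm{1}+\varsigma=\lim_n\pi_n\widetilde G_n^\top\mathbbm{1}$ in $\ell_\infty$. Since $G_n$ is symmetric, so is $\widetilde G_n$, and conjugation by a permutation preserves symmetry, so $\pi_n\widetilde G_n=(\pi_n\widetilde G_n)^\top$. Passing to the entrywise limit gives $E=E^\top$. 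For the row-sum identity, the symmetry of $\pi_n\widetilde G_n$ forces its row sums and column sums to agree, and taking $n\to\infty$ gives $E\mathbbm{1}+\sigma=E^\top\mathbbm{1}+\varsigma=E\mathbbm{1}+\varsigma$ using $E=E^\top$, whence $\sigma=\varsigma$.

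For the second claim, the absence of self-loops means $\diag(G_n)=0$, and this property is preserved under conjugation by $\pi_n$, so entry-by-entry we obtain $\diag(E)=\lim_n\diag(\pi_n\widetilde G_n)=0$. Combined with $\vartheta=\lim_n\mathrm{Tr}(\widetilde G_n)-\mathrm{Tr}(E)=0-0=0$ from Proposition~\ref{prop:structure_result}, this finishes Part 2.

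For the third claim, I would first address the issue that it is not obvious a priori whether degree-bounded simple graphs quotient-converge. By Theorem~\ref{thm:limits_formal}(2), every subsequence of $\bigl(\widetilde G_n=G_n/(2|E(G_n)|)\bigr)$ has a further subsequence that quotient-converges, so it suffices to show that every such sub-subsequential limit equals the grapheur $\lambda^2$, which corresponds to $(E,\sigma,\varsigma,\theta,\vartheta)=(0,0,0,1,0)$. Fix such a convergent subsequence and apply Proposition~\ref{prop:structure_result}. Since $G_n$ is simple, $\|\widetilde G_n\|_\infty\leq 1/(2|E(G_n)|)\to 0$, so $E=0$; and $\vartheta=0$ follows from Part 2. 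It remains to control the degrees, i.e., show $\|\widetilde G_n\mathbbm{1}\|_\infty\to 0$. In the dense regime $|E(G_n)|\geq cn^2$ and the trivial bound $\|G_n\mathbbm{1}\|_\infty\leq n$ gives $\|\widetilde G_n\mathbbm{1}\|_\infty\leq 1/(2cn)\to 0$; in the degree-bounded regime with maximum degree $d$ we get $\|\widetilde G_n\mathbbm{1}\|_\infty\leq d/(2|E(G_n)|)\to 0$ since $|E(G_n)|\to\infty$. Either way $\sigma=\varsigma=0$, and the normalization constraint forces $\theta=1$, so the limit is $\lambda^2$. The main (minor) obstacle is precisely this degree bookkeeping in the degree-bounded case, where one must exploit $|E(G_n)|\to\infty$ rather than a polynomial lower bound; everything else is an immediate consequence of Proposition~\ref{prop:structure_result}.
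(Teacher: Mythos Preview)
Your proof is correct and follows essentially the same approach as the paper's: both invoke Proposition~\ref{prop:structure_result} directly, using the fact that symmetry and zero diagonal are preserved under conjugation by $\pi_n$ to read off the constraints on $(E,\sigma,\varsigma,\vartheta)$, and then bounding $\|\widetilde G_n\mathbbm{1}\|_\infty$ in the dense and degree-bounded cases separately. The only notable difference is that you explicitly handle the convergence question in Part~3 via a subsequence argument and Theorem~\ref{thm:limits_formal}(2), whereas the paper simply applies Proposition~\ref{prop:structure_result} as if convergence were already granted (presumably leaning on the corollary's overarching hypothesis, or on Corollary~\ref{cor:dense_graphs1} in the dense case); your version is slightly more self-contained in this respect.
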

\begin{proof}
    The first two claims follow directly from Proposition~\ref{prop:structure_result}, since for any permutation $\pi_n\in\msf S_{\infty}$ the matrix $\pi_nG_n$ is, respectively, symmetric and has zero diagonal if $G_n$ is symmetric and has no self-loops.
    For the third claim, since each $G_n$ is simple, we have $G_n/\mathbbm{1}^\top G_n\mathbbm{1}\in\{0,1/n\}$ which converges entrywise to zero. Thus $E=0$ by Proposition~\ref{prop:structure_result}. 
    Since $G_n$ is symmetric, we have $G_n^\top\mathbbm{1}=G_n\mathbbm{1}=\deg(G_n)$ is the vector of degrees of the vertices in $G_n$. If $(G_n)$ is dense, then $\frac{\max_i\deg(G_n)_i}{2|E(G_n)|}\leq \frac{c}{n}\xrightarrow{n\to\infty}0$ for some constant $c>0$. 
    If $(G_n)$ is degree-bounded, with all degrees bounded by $k$, then $\frac{\max_i\deg(G_n)_i}{2|E(G_n)|} \leq \frac{k}{2|E(G_n)|}\xrightarrow{n\to\infty}0$. In either case, Proposition~\ref{prop:structure_result} shows that $\lim_n\frac{G_n}{2|E(G_n)|}=\lambda^2$ since $\diag(G_n)=0$ for all $n$.
\end{proof}

We end this section by giving a sufficient condition for the limit of a quotient-convergent sequence $(G_n)$ to consist entirely of discrete hubs, i.e., such that $\sigma=\varsigma=0$ and $\theta=\vartheta=0$ in~\eqref{eq:kallenberg_char}. 
In particular, this sufficient condition implies that the random quotients $(\rho(F_{k,n})G_n)$ and the grapheurs $(\msf M_{G_n})$ converge not just weakly but also in total variation. Such total variation convergence was shown for graphs sampled both from graphons~\cite[Cor.~10.25]{lovasz2012large} and from graphexes~\cite[\S5]{borgs2019sampling}, and was the primary mode of convergence studied for edge-exchangeable random graph models~\cite{Crane_Dempsey_2019}.  However, this condition is too restrictive for our purposes in this paper; for example, it does not apply to the sequence of stars in Example~\ref{ex:stars}.
In the following result, if $\mu,\nu$ are two discrete measures on a space $S$ then their total variation distance is $\mathrm{dist}_{\mathrm{TV}}(\mu,\nu)=\frac{1}{2}\sum_{x\in S}|\mu(\{x\})-\nu(\{x\})|$.
\begin{proposition}\label{prop:tv_convergence}
    Suppose $(G_n\in\Delta^{n\times n})$ and $E\in\RR_{\geq0}^{\NN\times \NN}$ satisfy $\lim_n\|\pi_nG_n-E\|_1=0$ for some permutations $\pi_n\in\msf S_n$. Then $(G_n)$ is quotient-convergent to $\msf M=\sum_{i,j}E_{i,j}\delta_{(T_i,T_j)}$. 
\end{proposition}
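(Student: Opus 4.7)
My plan is to use the characterization of quotient convergence via the $W_{\square}$-metric from Theorem~\ref{thm:limits_formal}(1), which reduces the task to verifying $\lim_n W_{\square}(\msf M_{G_n}, \msf M) = 0$.

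I would first reduce to the case $\pi_n = \id$. The distribution of $\msf M_G$ in~\eqref{eq:step_measure} depends only on $G$ up to vertex relabeling, since the $(T_i)$ are iid uniform; hence $\msf M_{\pi_n G_n} \overset{d}{=} \msf M_{G_n}$, and after relabeling the hypothesis becomes $\|G_n - E\|_1 \to 0$, viewing $G_n$ as zero-padded to an $\NN \times \NN$ array. The triangle inequality then forces $\|E\|_1 = \lim_n \|G_n\|_1 = 1$, so $\msf M = \sum_{i,j\in\NN} E_{i,j}\delta_{(T_i,T_j)}$ is a bona fide grapheur in $\mc M$ (of the purely-discrete type with $\sigma=\varsigma=0$ and $\theta=\vartheta=0$).

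Next, I would exhibit an explicit coupling of $\msf M_{G_n}$ and $\msf M$ using a single shared iid uniform sequence $(T_i)_{i\in\NN}$, namely
\begin{equation*}
    \msf M_n' = \sum_{i,j\in\NN}(G_n)_{i,j}\delta_{(T_i,T_j)} \quad\text{and}\quad \msf M' = \sum_{i,j\in\NN}E_{i,j}\delta_{(T_i,T_j)}.
\end{equation*}
Under this coupling, for any intervals $S,T\subseteq[0,1]$ the difference satisfies
\begin{equation*}
    |\msf M_n'(S\times T) - \msf M'(S\times T)| \leq \sum_{i,j\in\NN}|(G_n)_{i,j}-E_{i,j}| \cdot \mathbbm{1}[T_i\in S, T_j\in T] \leq \|G_n - E\|_1
\end{equation*}
pointwise (i.e., deterministically in the $T_i$). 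Taking expectations and then the supremum over rectangles yields $W_{\square}(\msf M_{G_n},\msf M) \leq \|G_n - E\|_1 \to 0$, and Theorem~\ref{thm:limits_formal}(1) delivers quotient-convergence of $(G_n)$ to $\msf M$.

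There is no substantive obstacle here: the heart of the matter is simply the common-randomness coupling, and the fact that $\ell_1$-closeness of the edge weights directly controls the mass of every rectangle and hence the $W_{\square}$-metric. The only routine checks are the invariance of $\mathrm{Law}(\msf M_G)$ under vertex relabeling and the identification $\|E\|_1 = 1$, both immediate. As a byproduct, the argument gives the stronger conclusion $W_{\square}(\msf M_{G_n},\msf M) \leq \|G_n - E\|_1$, so the grapheurs $(\msf M_{G_n})$ converge to $\msf M$ in total variation of the rectangle masses, not merely weakly.
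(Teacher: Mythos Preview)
Your proof is correct and follows essentially the same approach as the paper: both use the common-randomness coupling that shares a single iid uniform sequence $(T_i)$ between $\msf M_{G_n}$ and $\msf M$, and both bound the resulting discrepancy by $\|\pi_nG_n-E\|_1$. The only cosmetic difference is that you bound $W_{\square}$ directly and invoke Theorem~\ref{thm:limits_formal}(1), whereas the paper passes through the total variation distance to bound $W_1(\msf G_k[\msf M],\msf G_k[\msf M_{G_n}])$ for each $k$ and concludes via the definition of quotient convergence.
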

\begin{proof}
    Draw iid uniform $(T_i)$ and consider the coupling $\msf M_{G_n}'=\sum_{i,j}(\pi_nG_n)_{i,j}\delta_{(T_i,T_j)}$, $\msf M'=\sum_{i,j}E_{i,j}\delta_{(T_i,T_j)}$, using the same locations for both. Then with probability 1 we have $\mathrm{dist}_{\mathrm{TV}}(\msf M',\msf M_{G_n}')\leq \|\pi_nG_n-E\|_1/2\to0$ since the atoms $(T_i,T_j)$ are distinct almost surely. By~\eqref{eq:graph_from_measure}, we then have $W_1(\msf G_k[\msf M],\msf G_k[\msf M_{G_n}])\leq \mbb E\|\msf G_k[\msf M']-\msf G_k[\msf M'_{G_n}]\|_1\leq k^2\mbb E\mathrm{dist}_{\mathrm{TV}}(\msf M',\msf M'_{G_n})\to0$ for each $k$, proving quotient convergence.  
\end{proof}

\subsection{Duality between Grapheurs and Graphons}\label{sec:dual_graphons}
In this section, we characterize convergence of grapheurs in terms of graphons and vice-versa, exhibiting a facet of the duality between the two graph limit theories. Since graphon theory is most simply expressed in the undirected case (see~\cite[\S8]{diaconis2007graph} for the directed case), we restrict ourselves to this setting in the present section.

We begin by recalling a few basic notions from graphon theory. 
Let $\widetilde{\mc W} = \{W\colon[0,1]^2\to [0,1] \textrm{ measurable},\allowbreak\textrm{ symmetric}\}$ be the space of $[0,1]$-valued kernels. 
We define two kernels $W$ and $U$ to be equivalent, denoted $W\sim U$, if there exist (Lebesgue-)measure-preserving maps $\varphi,\psi\colon[0,1]\to[0,1]$ satisfying $W\circ(\varphi,\varphi)=U\circ(\psi,\psi)$.
This is the weak isomorphism between kernels commonly defined in the dense graph limits literature~\cite[Cor.~10.35]{lovasz2012large}. We denote by $\mc W=\widetilde{\mc W}/\sim$ the quotient space under this equivalence relation. Elements of $\mc W$ are called graphons~\cite[\S7.1]{lovasz2012large}.
A graphon $W\in \mc W$ defines random simple graphs of each size by setting $\mathbb{G}_k[W]=(\msf B_{T_i,T_j}^{(W)})_{i,j\in[k]}$ where $\msf B_{T_i,T_j}^{(W)}\sim\mathrm{Ber}(W(T_i,T_j))$ are drawn independently for $i < j$ while $\msf B_{T_j,T_i}^{(W)} = \msf B_{T_i,T_j}^{(W)}$ for $i>j$ and $\msf B_{T_i,T_i}^{(W)}=0$. 
Here $T_1,\ldots,T_k\overset{iid}{\sim}\mathrm{Unif}([0,1])$ as before. 
A sequence $(W_n)\subseteq\mc W$ converges if for each $k$, the sequence of random graphs $(\mathbb{G}_k[W_n])_k$ converges weakly for each $k$.\footnote{Graphon convergence is usually defined by total variation convergence of these random graphs, which is equivalent to their weak convergence since they are supported on the finite set $\mbb S^k_{\mathrm{sim}}$.}
This is equivalent to convergence of homomorphism densities and to convergence in cut metric~\cite[\S10]{lovasz2012large}.
Finally, a finite graph $G\in\mbb S^n_{\mathrm{sim}}$ defines a step graphon $W_G(x,y)=G_{i,j}$ if $x\in I_i^{(n)}\times I_j^{(n)}$ where $I_i^{(n)}=[(i-1)/n,i/n)$.

In our context, let $\mc M_{\mathrm{sym}}=\{\msf M\in\mc M: E=E^\top, \sigma=\varsigma, \diag(E)=0, \vartheta=0, \theta \in \mbb R_+ \textrm{ in~\eqref{eq:kallenberg_char}}\}$ be the collection of grapheurs corresponding to limits of undirected graphs with unit edge weight and no self-loops.
For any $W\in\widetilde{\mc W}$ and $\msf M\in \mc M_{\mathrm{sym}}$, define the random variable
\begin{equation}\label{eq:graphon_dual_pairing}\begin{aligned}
    \langle \msf M,W\rangle = &\sum_{i,j\in\NN}E_{i,j}\msf B_{T_i,T_j}^{(W)} + 2\sum_{i\in\NN}\sigma_i\int_0^1W(T_i,y)\, dy + \theta\int_{[0,1]^2}W(x,y)\, dx\, dy.
\end{aligned}\end{equation}
Note that this random variable exists and is bounded by $|\langle \msf M,W\rangle|\leq 1$ because both $(E_{i,j})$ and $(\sigma_i)$ are summable. Also note that $\langle \msf M,W_G\rangle = \mbb E_{\msf M}W_G=\langle G,\msf G_k[\msf M]\rangle$ if $G\in\mbb S^k_{\mathrm{sim}}$, since $W_G$ takes values in $\{0,1\}$ so $\msf B_{T_i,T_j}^{(W_G)}=W_G(T_i,T_j)$. Moreover, for finite graphs $G_1\in\Delta^{n\times n}, ~ G_2\in\mbb S^m_{\mathrm{sim}}$, we have
\begin{equation*}
    \langle\msf M_{G_1},W_{G_2}\rangle=\langle \rho(F_{m,n})G_1,G_2\rangle=\langle G_1,\rho(F_{m,n})^\star G_2\rangle,
\end{equation*}
which can equivalently be viewed as comparing a random quotient of $G_1$ to $G_2$ or comparing a random subgraph of $G_2$ to $G_1$ (see Section~\ref{sec:intro}). 
We now prove that the random variable~\eqref{eq:graphon_dual_pairing} only depends on the equivalence class of a graphon $W$.
\begin{proposition}
    For any $W,U\in\widetilde{\mc W}$, if $W\sim U$ then $\langle \msf M,W\rangle\overset{d}{=}\langle \msf M,U\rangle$ for any $\msf M\in \mc M_{\mathrm{sym}}$.
\end{proposition}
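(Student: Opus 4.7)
The plan is to reduce to a one-sided change-of-variables lemma and then exhibit an almost sure coupling. Specifically, I first prove: for any measure-preserving $\varphi\colon[0,1]\to[0,1]$ and any $W\in\widetilde{\mc W}$, one has $\langle\msf M,W\circ(\varphi,\varphi)\rangle\overset{d}{=}\langle\msf M,W\rangle$. Given this lemma, the proposition follows by applying it twice: $\langle\msf M,W\rangle\overset{d}{=}\langle\msf M,W\circ(\varphi,\varphi)\rangle=\langle\msf M,U\circ(\psi,\psi)\rangle\overset{d}{=}\langle\msf M,U\rangle$, where the middle equality is the hypothesis $W\sim U$.

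To prove the lemma, I realize the Bernoullis in~\eqref{eq:graphon_dual_pairing} via auxiliary iid $\mathrm{Unif}([0,1])$ random variables $(U_{ij})_{i<j}$, independent of $(T_i)$, by declaring $\msf B^{(K)}_{T_i,T_j}=\mathbbm{1}[U_{ij}\leq K(T_i,T_j)]$ for any kernel $K$ (extended symmetrically, with the diagonal set to zero). Working on the probability space generated by $(T_i)$ and $(U_{ij})$, I then compare two random variables. For $\langle\msf M,W\circ(\varphi,\varphi)\rangle$ I use formula~\eqref{eq:graphon_dual_pairing} verbatim. For a distributional copy of $\langle\msf M,W\rangle$ I substitute $\varphi(T_i)$ for $T_i$ throughout the formula; since $\varphi$ is measure-preserving, $(\varphi(T_i))_{i\in\NN}$ is again iid $\mathrm{Unif}([0,1])$ and is independent of $(U_{ij})$, so this substitution reproduces the correct joint distribution of $\langle\msf M,W\rangle$. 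In this coupling the Bernoulli contributions agree pointwise, since both reduce to $\mathbbm{1}[U_{ij}\leq W(\varphi(T_i),\varphi(T_j))]$, and the $\sigma_i$-integrals agree via the measure-preserving identity $\int_0^1 W(\varphi(T_i),\varphi(y))\,dy=\int_0^1 W(\varphi(T_i),y)\,dy$. The same change of variables in both coordinates gives $\int W\circ(\varphi,\varphi)=\int W$, handling the deterministic $\theta$-term. Hence the two random variables coincide almost surely under the coupling, proving the lemma.

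The one step that requires care is verifying that substituting $\varphi(T_i)$ for $T_i$ in~\eqref{eq:graphon_dual_pairing} really does yield a random variable distributed as $\langle\msf M,W\rangle$, rather than merely matching marginals of the summands. This reduces to observing that $(U_{ij})$ is independent of $(T_i)$, hence also independent of $(\varphi(T_i))$, so the conditional Bernoulli structure in~\eqref{eq:graphon_dual_pairing} is preserved. Absolute convergence of the series, needed to manipulate the coupled sums termwise and to ensure pointwise equality, follows from $\sum_{i,j}E_{i,j}+2\sum_i\sigma_i\leq 1$ and the bound $W\leq 1$. No issue arises from the diagonal, since $\diag(E)=0$ and $\vartheta=0$ for $\msf M\in\mc M_{\mathrm{sym}}$, and $\msf B^{(W)}_{T_i,T_i}=0$ by convention.
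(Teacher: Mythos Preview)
Your proof is correct and uses essentially the same mechanism as the paper: push the iid uniform locations through a measure-preserving map (so that $(\varphi(T_i))$ are again iid uniform), couple the Bernoullis via common auxiliary uniforms so that both sides use the same success probability $W(\varphi(T_i),\varphi(T_j))$, and handle the $\sigma_i$- and $\theta$-terms by the change-of-variables identity $\int f\circ\varphi\,d\lambda=\int f\,d\lambda$. The only structural difference is that you factor the argument through a reusable one-sided lemma $\langle\msf M,W\circ(\varphi,\varphi)\rangle\overset{d}{=}\langle\msf M,W\rangle$ and apply it twice, whereas the paper builds a single coupling using both $\varphi$ and $\psi$ simultaneously (exploiting $W(\varphi(T_i),\varphi(T_j))=U(\psi(T_i),\psi(T_j))$ directly); this is a minor expository choice and not a substantive difference.
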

\begin{proof}
    Suppose $\msf M\in\mc M_{\mathrm{sym}}$ is defined by $(E,\sigma=\varsigma,\theta,\vartheta=0)$ as in~\eqref{eq:kallenberg_char}. 
    Since $W\sim U$, there are measure-preserving maps $\varphi,\psi\colon[0,1]\to[0,1]$ satisfying $W\circ(\varphi,\varphi)=U\circ(\psi,\psi)$. 
    We now define a coupling of $\langle \msf M,W\rangle$ and $\langle \msf M,U\rangle$ under which the two are equal almost surely. 
    Sample locations $(T_i)$ iid uniformly on $[0,1]$, and note that both $(\varphi(T_i)),(\psi(T_i))\overset{iid}{\sim}\mathrm{Unif}([0,1])$ because $\varphi$ and $\psi$ are measure-preserving. 
    Note that, conditioned on the locations $(T_i)$ above, we have $\msf B_{\varphi(T_i),\varphi(T_j)}^{(W)}\overset{d}{=}\msf B_{\psi(T_i),\psi(T_j)}^{(U)}$, since they are both Bernoulli random variables with success probability $W(\varphi(T_i),\varphi(T_j))=U(\psi(T_i),\psi(T_j))$. Letting $\msf B_{i,j}$ be a sample from the above distribution independently for each $i<j$, setting $\msf B_{i,j}=\msf B_{j,i}$ for $i>j$, and $\msf B_{i,i}=0$ for all $i$, we define the coupling of $\langle \msf M,W\rangle$ and $\langle \msf M,U\rangle$ by using~\eqref{eq:graphon_dual_pairing} with these common samples $(\msf B_{i,j})$ for both, and note that under this coupling they are equal almost surely. Thus, we have $\langle \msf M,W\rangle\overset{d}{=}\langle \msf M,U\rangle$ as desired.
\end{proof}
Thus, the pairing~\eqref{eq:graphon_dual_pairing} defines a pairing from $\mc M_{\mathrm{sym}}\times\mc W$ to bounded scalar random variables. Importantly, we can characterize convergence in $\mc M_{\mathrm{sym}}$ in terms of graphons and characterize graphon convergence in $\mc W$ in terms of grapheurs. The following is the formal version of Theorem~\ref{thm:graphon_duality_intro} from Section~\ref{sec:intro}. 
\begin{theorem}\label{thm:graphon_duality}
    We have $\lim_n\msf M_n=\msf M$ in $\mc M_{\mathrm{sym}}$ if and only if $\langle \msf M_n, W_G\rangle\to\langle\msf M,W_G\rangle$ weakly for all simple graphs $G$.
    We also have $\lim_nW_n\to W$ in $\mc W$ if and only if $\langle \msf M_G,W_n\rangle\to\langle\msf M_G,W\rangle$ weakly for all undirected graphs $G$ with unit edge weight and no self-loops.
\end{theorem}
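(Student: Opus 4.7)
The central reduction is a single computational identity. For $\msf M \in \mc M_{\mathrm{sym}}$ and $G \in \mbb S^k_{\mathrm{sim}}$, the Bernoulli variables $\msf B^{(W_G)}_{T_i, T_j}$ in~\eqref{eq:graphon_dual_pairing} are deterministic since $W_G$ is $\{0,1\}$-valued, and expanding $\int W_G\, d\msf M$ using $\sigma = \varsigma$, $\vartheta = 0$, and the symmetry of $G$ yields $\langle \msf M, W_G\rangle = \langle G, \msf G_k[\msf M]\rangle$. Dually, when $G \in \Delta^{k \times k}$ is symmetric with no self-loops, the grapheur $\msf M_G$ is purely atomic ($E = G$, all other parameters zero), so $\langle \msf M_G, W\rangle = \sum_{i,j} G_{i,j}\, \msf B^{(W)}_{T_i, T_j} = \langle G, \mathbb{G}_k[W]\rangle$. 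Both equivalences thus reduce to statements about weak convergence of the random matrices $\msf G_k[\cdot]$ and $\mathbb{G}_k[\cdot]$ tested against the appropriate family of $G$.

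\textbf{Forward directions.} Both follow from continuous mapping. If $\msf M_n \to \msf M$ in $\mc M_{\mathrm{sym}}$, Proposition~\ref{prop:M0_compact} gives $\msf G_k[\msf M_n] \to \msf G_k[\msf M]$ weakly in the compact set $\Delta^{k \times k}$, and the linear functional $X \mapsto \langle G, X\rangle$ is continuous. Similarly, $W_n \to W$ in $\mc W$ implies $\mathbb{G}_k[W_n] \to \mathbb{G}_k[W]$ weakly in $\mbb S^k_{\mathrm{sim}}$, delivering convergence of the paired scalars.

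\textbf{Backward direction, first equivalence.} I use compactness of $\mc M$ and a subsequence argument: every subsequence of $(\msf M_n)$ has a further sub-subsequence $\msf M_{n_l} \to \msf M'$ in $\mc M$, where the forward direction applied at the level of $\mc M$, combined with the hypothesis, gives $\langle G, \msf G_k[\msf M']\rangle \overset{d}{=} \langle G, \msf G_k[\msf M]\rangle$ for every $k$ and $G \in \mbb S^k_{\mathrm{sim}}$. The main obstacle is to conclude $\msf M' = \msf M$ despite the fact that simple-graph tests only probe off-diagonal entries of $\msf G_k[\cdot]$. I plan to proceed in three steps. First, flip-symmetry is closed under weak convergence (flip is a homeomorphism of $[0,1]^2$), so $E' = (E')^\top$ and $\sigma' = \varsigma'$; moreover, equating first moments of an off-diagonal entry yields $\mbb E(\msf G_k[\msf M'])_{a,b} = [1 - \vartheta' - \sum_i E'_{i,i}]/k^2 = 1/k^2$, which combined with nonnegativity forces $\vartheta' = 0$ and $\diag(E') = 0$, placing $\msf M' \in \mc M_{\mathrm{sym}}$. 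Second, bounded scalar equality yields agreement of every integer moment $\mbb E\langle G, \msf G_k[\cdot]\rangle^p$; by varying simple $G$ and inverting the resulting linear system via inclusion-exclusion over edge subsets, one recovers each quotient density $t_Q(H; \msf M') = t_Q(H; \msf M)$ for every multigraph $H$ free of self-loops. Third, to reach $H$ with self-loops I invoke equipartition consistency (Section~\ref{sec:eqp_intro}): for any $\msf M^\ast \in \mc M_{\mathrm{sym}}$, $(\msf G_k[\msf M^\ast])_{i,i} = \sum_{u,v \in J_i^{(m)}}(\msf G_{mk}[\msf M^\ast])_{u,v}$ for any $m$, and a direct computation using $\vartheta^\ast = 0$, $\diag(E^\ast) = 0$ yields $\sum_{u \in J_i^{(m)}} \mbb E(\msf G_{mk}[\msf M^\ast])_{u,u} = O(1/(mk^2)) \to 0$, so each diagonal entry of $\msf G_k[\msf M^\ast]$ is the $L^1$-limit of off-diagonal sums in $\msf G_{mk}[\msf M^\ast]$. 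Applying this to both $\msf M$ and $\msf M'$ and using the off-diagonal agreement at every scale $mk$, all $t_Q(H;\cdot)$ agree, and Proposition~\ref{prop:M0_compact}(4) forces $\msf M' = \msf M$.

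\textbf{Backward direction, second equivalence.} This is considerably simpler since $\mathbb{G}_k[W]$ takes only finitely many values in $\mbb S^k_{\mathrm{sim}}$. Compactness of $\mc W$ under the cut metric yields a subsequential limit $W'$ with $\langle G, \mathbb{G}_k[W']\rangle \overset{d}{=} \langle G, \mathbb{G}_k[W]\rangle$ for all admissible $G$. Choosing a single $G \in \Delta^{k\times k}$ whose entries (suitably normalized) are rationally independent makes $H \mapsto \langle G, H\rangle$ injective on $\mbb S^k_{\mathrm{sim}}$, so coincidence of these scalar distributions recovers $\mathbb{G}_k[W'] \overset{d}{=} \mathbb{G}_k[W]$ for every $k$. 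This characterizes graphon equivalence, forcing $W' = W$ in $\mc W$.
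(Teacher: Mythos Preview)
Your forward directions and the second backward direction are fine. For the latter, the paper avoids compactness entirely: since the admissible $G$ span all of $\mbb S^k$ modulo the (identically zero) diagonal, Cram\'er--Wold applied directly to $\mathbb G_k[W_n]$ gives $\mathbb G_k[W_n]\to\mathbb G_k[W]$ weakly for each $k$, which is the definition of graphon convergence. Your injectivity trick via a $G$ with rationally independent entries is correct but takes a detour through subsequential limits that the paper does not need.

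The real issue is step~2 of your first backward direction. The ``inclusion--exclusion over edge subsets'' you invoke recovers, for each edge set $S$ and each degree $p$, only the aggregate $\sum_{\supp\alpha=S,\,|\alpha|=p}\binom{p}{\alpha}\,\mbb E[X^\alpha]$, not the individual mixed moments. Concretely, for two disjoint edges $e_1,e_2$ (at any scale $k$) the degree-$4$ information is $\mbb E[X_{e_1}^4]$ and $\mbb E[(X_{e_1}+X_{e_2})^4]=2\mbb E[X_{e_1}^4]+8\,\mbb E[X_{e_1}^3X_{e_2}]+6\,\mbb E[X_{e_1}^2X_{e_2}^2]$, one equation in the two unknowns $t_Q(3e_1+e_2;\cdot)$ and $t_Q(2e_1+2e_2;\cdot)$. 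Adding more disjoint edges does not help: $P_n=\mbb E[(\sum_{i\leq n}X_{e_i})^p]$ is a polynomial in $n$ of degree $p$ with no constant term, so its falling-factorial coefficients give one equation per \emph{length} of a partition of $p$, not per partition; the length-$2$ coefficient is always $4m_{31}+3m_{22}$. Higher degrees introduce new unknowns rather than new constraints on these two, and passing to finer scales via equipartition reproduces the same obstruction. So as written, your step~2 does not go through, and step~3 cannot start.

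The paper's argument for this direction is structurally different and sidesteps moment matching altogether. It shows directly that $\mbb E_{\msf M_n}f\to\mbb E_{\msf M}f$ weakly for every continuous $f$ by approximating $f$ with a step function $f_k$ that vanishes on the diagonal blocks of the $k\times k$ grid; the crucial estimate is that for any $\msf M'\in\mc M_{\mathrm{sym}}$ one has $\mbb E_{\msf M'}|f-f_k|\leq \epsilon+\tfrac1k$, uniformly in $\msf M'$, because $\mbb E\msf M'=\lambda^2$ forces the diagonal blocks to carry expected mass exactly $1/k$. This reduces the problem to convergence of $\mbb E_{\msf M_n}W$ for off-diagonal step functions, which the paper obtains from the hypothesis by linearity. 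No compactness, no moment inversion, and no separate treatment of self-loops is needed.
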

\begin{proof}
    For the first claim, suppose $\langle\msf M_n,W_G\rangle\to\langle\msf M, W_G\rangle$ for all $G\in\mbb S^k_{\mathrm{sim}}$ and all $k\in\NN$. Therefore, $\mbb E_{\msf M_n}W_G\to\mbb E_{\msf M}W_G$ for any such $G$. Taking linear combinations of such $W_G$, we get $\mbb E_{\msf M_n}W\to\mbb E_{\msf M}W$ for any $W\colon[0,1]^2\to\RR$ that is piecewise-constant on the $k\times k$ uniform grid and equals zero on the diagonal for some $k\in\NN$. For any continuous and symmetric $f\colon[0,1]^2\to\RR$, define $f_k=\sum_{i,j=1}^kf_{i,j}^{(k)}\mathbbm{1}_{I_i^{(k)}\times I_j^{(k)}}$ where if $i\neq j$ we set $f_{i,j}^{(k)}=f(\frac{i}{k}-\frac{1}{2k}, \frac{j}{k}-\frac{1}{2k})$ to the value of $f$ at the center of the rectangle $I_i^{(k)}\times I_j^{(k)}$, and if $i=j$ we set $f_{i,i}^{(k)}=0$. Since $f$ is continuous on a compact set, it is uniformly continuous, so for any $\epsilon>0$ we have $|f(x)-f_k(x)|\leq\epsilon$ for all $x\in[0,1]^2\setminus\bigcup_{i=1}^k(I_i^{(k)}\times I_i^{(k)})$. Therefore, for any grapheur $\msf M'\in\mc M_{\mathrm{sym}}$ we have
    \begin{equation*}
        \mbb E_{\msf M'}|f-f_k|\leq \epsilon + \mbb E\msf M'\left(\bigcup_{i=1}^kI_i^{(k)}\times I_i^{(k)}\right)=\epsilon + \frac{1}{k},
    \end{equation*}
    for all large $k$ since $\mbb E\msf M'=\lambda^2$ for $\msf M'\in\mc M_{\mathrm{sym}}$. Thus, we have
    \begin{equation*}
        W_1(\mbb E_{\msf M_n}f,\mbb E_{\msf M}f)\leq 2(\epsilon+k^{-1}) + W_1(\mbb E_{\msf M_n}f_k, \mbb E_{\msf M}f_k) \xrightarrow{n\to\infty} 2(\epsilon+k^{-1}),
    \end{equation*}
    since $f_k$ is piecewise-constant on the $k\times k$ uniform grid and has zero values on its diagonal. We conclude by taking $k\to\infty$ that $\mbb E_{\msf M_n}f\to\mbb E_{\msf M}f$ weakly for any continuous $f\colon[0,1]^2\to\RR$, and hence that $\msf M_n\to\msf M$ weakly.
    Conversely, suppose $\msf M_n\to\msf M$ weakly, and let $W=W_G$ for $G\in\mbb S^k_{\mathrm{sim}}$. Then 
    \begin{equation*}
        \langle \msf M_n, W_G\rangle=\mbb E_{\msf M_n}W_G=\langle G,\msf G_k[\msf M_n]\rangle \to \langle G,\msf G_k[\msf M]\rangle = \mbb E_{\msf M}W_G  =\langle \msf M,W_G\rangle,    
    \end{equation*}
    weakly, since $\msf G_k[\msf M_n]\to\msf G_k[\msf M]$ weakly for each $k\in\NN$. This proves the first claim.
    

    For the second claim, suppose $\langle \msf M_G,W_n\rangle\to\langle\msf M_G,W\rangle$ weakly for all graphs $G\in \mbb S^k\cap \Delta^{k\times k}$ with $\diag(G)=0$, i.e., undirected weighted graphs with no self-loops and total edge weight of 1. We then have 
    \begin{equation*}
        \langle G,\mathbb{G}_k[W_n]\rangle = \langle \msf M_G,W_n\rangle\to\langle \msf M_G,W\rangle=\langle G,\mathbb{G}_k[W]\rangle,
    \end{equation*}
    weakly. Therefore, we get $\langle G,\mathbb{G}_k[W_n]\rangle\to\langle G,\mathbb{G}_k[W]\rangle$ for all $G\in\mbb S^k$ by linearity and the fact that $\mathbb{G}_k[W_n]$ all have zero diagonal. This implies $\mathbb{G}_k[W_n]\to \mathbb{G}_k[W]$ weakly by Cram\'er--Wold~\cite[Cor.~6.5]{kallenberg1997foundations}. Since this holds for all $k\in\NN$, we conclude that $W_n\to W$.
    Conversely, suppose $W_n\to W$. Then $\langle \msf M_G,W_n\rangle = \langle G,\msf G_k[W_n]\rangle\to\langle G,\msf G_k[W]\rangle=\langle \msf M_G,W\rangle$ for any $G\in\mbb S^k\cap\Delta^{k\times k}$.
\end{proof}
We conjecture that the above duality pairing is, in fact, bi-continuous.

\begin{conjecture}\label{conj:bicont_pair}
    If $(\msf M_n)\subseteq\mc M_{\mathrm{sym}}$ and $(W_n)\subseteq\mc W$ converge to $\msf M$ and $W$, respectively, then $\langle\msf M_n,W_n\rangle\to\langle\msf M,W\rangle$ weakly.
\end{conjecture}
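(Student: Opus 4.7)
The plan is to prove the conjecture via the method of moments. Since $\langle \msf M,W\rangle \in [0,1]$ almost surely for any $\msf M\in\mc M_{\mathrm{sym}}$ and $W\in\mc W$ (using $W\in[0,1]$, $\msf B^{(W)}\in\{0,1\}$, and the normalization $\sum_{i,j}E_{i,j}+2\sum_i\sigma_i+\theta=1$ defining $\mc M_{\mathrm{sym}}$), its distribution is uniquely determined by its sequence of moments, and weak convergence of such $[0,1]$-valued random variables is equivalent to convergence of all moments. Thus it suffices to show $\mbb E\langle \msf M_n,W_n\rangle^r\to\mbb E\langle\msf M,W\rangle^r$ for every integer $r\geq 0$.

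To compute these moments I would expand the $r$-th power of the three summands in~\eqref{eq:graphon_dual_pairing} and condition on the iid uniform locations $(T_i)$. Because the Bernoullis $\msf B^{(W)}_{T_i,T_j}$ are conditionally independent given the $(T_i)$ and idempotent, each joint Bernoulli expectation collapses to a product of $W(T_i,T_j)$'s. Taking the outer expectation over the $(T_i)$ and grouping index-tuples by their coincidence pattern, each resulting term takes the shape $c_{H,H'}\cdot t_Q(H;\msf M)\cdot t(H';W)$: the factor $t_Q(H;\msf M)$ absorbs sums of monomials in the Kallenberg parameters $E_{i,j},\sigma_i,\theta$ via~\eqref{eq:grapheur_density} (using the essentially-unique recovery of these parameters from quotient densities discussed after Definition~\ref{def:M0}), while $t(H';W)$ is an iterated integral of $W$ at iid uniforms. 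The net effect is an absolutely-convergent representation
\begin{equation*}
    \mbb E\langle\msf M,W\rangle^r = \sum_{H,H'} c_{H,H'}\cdot t_Q(H;\msf M)\cdot t(H';W).
\end{equation*}
As a sanity check, for $r=1$ the inner Bernoulli expectations yield $\mbb E\langle\msf M,W\rangle = (\sum_{i,j}E_{i,j}+2\sum_i\sigma_i+\theta)\int W = t(K_2;W)$, consistent with the conjecture.

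Given such a representation, termwise convergence $t_Q(H;\msf M_n)\to t_Q(H;\msf M)$ (Proposition~\ref{prop:M0_compact}(4)) together with $t(H';W_n)\to t(H';W)$ (standard for cut-convergent graphons,~\cite[\S8]{lovasz2012large}) handles each summand. The main obstacle I anticipate is interchanging the infinite sum with the limit $n\to\infty$, since the expansion has infinitely many terms arising from the infinite sums $\sum_{i,j}E^{(n)}_{i,j}$ and $\sum_i\sigma^{(n)}_i$ in the Kallenberg representation of $\msf M_n$. I expect to justify this by dominated convergence: use $|t_Q|,|t|\leq 1$ together with a uniform tail bound obtained by truncating each $\msf M_n$ and $\msf M$ to their top-$K$ hubs (following the explicit construction in the proof of Theorem~\ref{thm:limits_formal}(3)) and controlling the truncation error in $W_{\square}$ uniformly in $n$ via Proposition~\ref{prop:structure_result}. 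An alternative route is a diagonal approximation: take finite-graph approximants $\msf M_{G_n^{(k)}}\to\msf M_n$, $\msf M_{G^{(k)}}\to\msf M$ and step-graphon approximants $W_{H_n^{(k)}}\to W_n$, $W_{H^{(k)}}\to W$ (via Theorem~\ref{thm:limits_formal}(3) and its graphon analog), and invoke the bilinear identity $\langle\msf M_{G_1},W_{G_2}\rangle=\langle\rho(F_{m,n})G_1,G_2\rangle$, whose joint continuity in $(G_1,G_2)$ is transparent at each fixed finite resolution; extracting a diagonal subsequence then reduces joint continuity to equicontinuity of the approximation errors, which again is the delicate step.
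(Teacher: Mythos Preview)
The paper does not prove this statement: it is explicitly labeled a conjecture (Conjecture~\ref{conj:bicont_pair}) immediately after Theorem~\ref{thm:graphon_duality}, and it reappears among the open problems in Section~\ref{sec:conclusions}. There is therefore no proof in the paper to compare against; I can only assess your sketch on its own merits.

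The method-of-moments reduction is sound, and your $r=1$ check is correct. But the difficulty is not quite where you locate it. For each fixed $r$, expanding $\langle\msf M,W\rangle^r$ and grouping index tuples by their coincidence pattern produces only \emph{finitely many} terms (one per partition of the at most $2r$ indices involved), so there is no infinite sum to interchange with a limit. The genuine gap is your assertion that the $\msf M$-side coefficient in each term is a quotient density. For finite graphs this works: one gets $\mbb E\langle\msf M_G,W\rangle^r=\sum_{\pi}\mathrm{inj}(K_\pi;G)\,t(H_\pi;W)$, a finite sum whose $\msf M$-factors extend continuously to $\mc M$ by Proposition~\ref{prop:polys_are_testable}. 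For a general grapheur, however, the direct expansion via~\eqref{eq:graphon_dual_pairing} yields symmetric polynomials in the Kallenberg parameters such as $\sum_{i,j}\sigma_i E_{i,j}$, and these are \emph{not} individually continuous on $\mc M$: they vanish on the dense set $\{\msf M_G:G\textrm{ finite}\}$ (where $\sigma=0$) yet are nonzero for grapheurs with both hubs and stars. Hence they cannot be written as quotient densities, and the ``essentially-unique recovery'' after Definition~\ref{def:M0} does not furnish the polynomial identity you need. Showing that the full combination nevertheless agrees with $\sum_\pi\overline{\mathrm{inj}}(K_\pi;\msf M)\,t(H_\pi;W)$ (the continuous extension of the finite-graph formula) is equivalent to proving continuity of $\msf M\mapsto\mbb E\langle\msf M,W\rangle^r$ for each fixed $W$, which is already a nontrivial piece of the conjecture; your sketch does not break this circularity. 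Your truncation and diagonal-approximation ideas are plausible attack routes, but the equicontinuity they require is precisely the missing ingredient.
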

In particular, this conjecture implies that $\msf M_n\to\msf M$ if and only if $\langle \msf M_n,W\rangle\to\langle \msf M,W\rangle$ for all graphons $W\in\mc W$, and $W_n\to W$ if and only if $\langle \msf M,W_n\rangle\to\langle \msf M,W\rangle$ for all grapheurs $\msf M\in\mc M_{\mathrm{sym}}$, a strengthening of Theorem~\ref{thm:graphon_duality}.

Theorem~\ref{thm:graphon_duality} exhibits a concrete, formal duality between quotient convergence and graphon convergence. 
This duality can be extended to general grapheurs and more general kernels on $[0,1]^2$, but the extension of graphon convergence to this setting is more complicated~\cite[\S8]{diaconis2007graph}. We therefore leave this and other extensions of this duality for future work.
\subsection{Missing Proofs from Section~\ref{sec:limits}}\label{sec:missing_proofs}
In this section, we give all the proofs deferred from Section~\ref{sec:characterizing_lims} and Section~\ref{sec:structure_of_lims}, namely, Propositions~\ref{prop:comparing_metrics} and~\ref{prop:M0_compact}.
\subsubsection*{Proof of Proposition~\ref{prop:comparing_metrics}}
Many of the proofs in this section are based on standard approximations of rectangles, and continuous functions by step functions on a uniform grid and vice-versa. Such approximations allow us to compare the measures of these objects under our random exchangeable measures using the following lemma.

\begin{lemma}\label{lem:means}
Fix any grapheur $\msf M\in\mc M$.
\begin{enumerate}
    \item There is $\theta\in[0,1]$ satisfying $\mbb E\msf M = \theta\lambda^2+(1-\theta)\lambda_D$.

    \item For any Borel set $B\subseteq[0,1]^2$, we have $\mbb E\msf M(B)\leq\max\{\lambda^2(B),\lambda_D(B)\}$. In particular, if $\lambda^2(B)=\lambda_D(B)=0$, then $\msf M(B)=0$ almost surely.
\end{enumerate}
\end{lemma}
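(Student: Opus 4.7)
The plan is to prove part 1 by identifying $\mu := \mbb E\msf M$ as a deterministic exchangeable probability measure on $[0,1]^2$ and then classifying all such measures via the Kallenberg representation~\eqref{eq:kallenberg_char}; part 2 will then follow at once.

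First I would verify that $\mu$ is a deterministic Borel probability measure satisfying $\mu = \mu \circ (\sigma,\sigma)^{-1}$ for every measure-preserving bijection $\sigma\colon[0,1]\to[0,1]$. The probability-measure property is inherited from $\msf M$, and the invariance follows by taking expectations of the distributional equality $\msf M((\sigma,\sigma)^{-1}B) \overset{d}{=} \msf M(B)$ for each Borel $B$. I would then view $\mu$ as the constant random measure whose law is the point mass $\delta_\mu\in\mc P(\mc P([0,1]^2))$. Any point mass is extremal among all probability measures, since any convex decomposition $\delta_\mu = \alpha P_1 + (1-\alpha)P_2$ forces $P_1(\{\mu\}^c) = P_2(\{\mu\}^c) = 0$; in particular, $\delta_\mu$ is extremal among exchangeable laws, and so $\mu$ is itself a grapheur in $\mc M$.

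Invoking the Kallenberg representation~\eqref{eq:kallenberg_char}, I would then conclude that
\begin{equation*}
\mu = \sum_{i,j\in\NN}E_{i,j}\delta_{(T_i,T_j)} + \sum_{i\in\NN}[\sigma_i(\delta_{T_i}\otimes\lambda) + \varsigma_i(\lambda\otimes\delta_{T_i})] + \theta\lambda^2 + \vartheta\lambda_D
\end{equation*}
almost surely, for some deterministic nonnegative coefficients summing to $1$ and iid uniform $(T_i)$. The left-hand side being deterministic, I would argue that all atomic coefficients must vanish: if $E_{i,j}>0$, then $\mu(\{(T_i,T_j)\})\geq E_{i,j}$ almost surely, contradicting the fact that a probability measure on $[0,1]^2$ has at most countably many atoms and so places zero mass on $\{(x,y)\}$ for Lebesgue-a.e.\ $(x,y)$; an analogous rectangle-based argument rules out nonzero $\sigma_i$ and $\varsigma_i$ by testing on thin vertical or horizontal strips. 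This leaves $\mu = \theta\lambda^2 + \vartheta\lambda_D$ with $\theta+\vartheta = 1$, giving part 1.

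Part 2 then follows directly: by part 1, $\mbb E\msf M(B) = \theta\lambda^2(B) + (1-\theta)\lambda_D(B)$ is a convex combination, hence bounded above by $\max\{\lambda^2(B),\lambda_D(B)\}$. When both $\lambda^2(B)$ and $\lambda_D(B)$ vanish, we get $\mbb E\msf M(B) = 0$, which combined with the almost-sure nonnegativity of the random variable $\msf M(B)$ forces $\msf M(B) = 0$ almost surely. The main step requiring care is the atomic-vanishing argument in the previous paragraph, since this is where the deterministic nature of $\mu$ must be translated into the vanishing of all point-type components of the Kallenberg representation; once that is in hand, everything else is routine.
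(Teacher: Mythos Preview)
Your proof is correct but takes a different route from the paper's. The paper simply observes that $\mu=\mbb E\msf M$ is a deterministic exchangeable probability measure on $[0,1]^2$ and then invokes \cite[Thm.~9.12]{kallenberg2005probabilistic}, which directly states that the space of such measures is two-dimensional and spanned by $\lambda^2$ and $\lambda_D$; part~2 then follows exactly as you argue. You instead recover this classification from the \emph{random} Kallenberg representation~\eqref{eq:kallenberg_char}: you note that $\delta_\mu$, being a point mass, is extremal among exchangeable laws, so $\mu$ is itself a grapheur, and then you argue that determinism of the left-hand side forces all the atomic and line-type coefficients $E_{i,j},\sigma_i,\varsigma_i$ to vanish (since their supports move with the uniformly random $T_i$, whereas a fixed $\mu$ can have atoms or positive vertical/horizontal line masses at only countably many locations).

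What each buys: the paper's approach is a one-line citation and avoids the atomic-vanishing argument entirely. Your approach is more self-contained---it uses only the representation~\eqref{eq:kallenberg_char} already quoted in the paper and needs no additional external result---at the cost of the extra argument you flag as ``requiring care.'' That argument is sound as you sketch it; just note that for the $\sigma_i,\varsigma_i$ terms you want to test on the degenerate strip $\{T_i\}\times[0,1]$ (respectively $[0,1]\times\{T_i\}$) rather than a thin but positive-width strip, since only the former has $\mu$-mass zero for Lebesgue-a.e.\ choice of $T_i$.
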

\begin{proof}
    For the first claim, since $\msf M$ is exchangeable we conclude that $\mu=\mbb E\msf M$ is a (deterministic) exchangeable measure on $[0,1]^2$, meaning that $\mu\circ(\sigma,\sigma)=\mu$ for all (Lebesgue)-measure preserving bijections $\sigma\colon[0,1]\to[0,1]$. The space of such measures is two-dimensional and is spanned by $\lambda^2,\lambda_D$ by~\cite[Thm.~9.12]{kallenberg2005probabilistic}, so $\mu=\theta\lambda+\vartheta\lambda_D$ for some $\theta,\vartheta\in\RR$. Since $\msf M$ is a probability measure almost surely, its mean $\mu$ is a probability measure as well, necessitating $\theta+\vartheta=1$ and $\theta,\vartheta\geq0$. This proves the first claim.

    The second claim immediately follows from the first, noting in particular that if $\lambda^2(B)=\lambda_D(B)=0$ then the scalar random variable $\msf M(B)$ is nonnegative with a vanishing mean.
\end{proof}
To prove Proposition~\ref{prop:comparing_metrics}, we also need to relate couplings of random measures to couplings of random graphs derived from them.
\begin{lemma}\label{lem:graph_couplings}
    For any $\msf M_1,\msf M_2\in\mc M$ and coupling $(\msf G_1, \msf G_2)$ of $(\msf G_k[\msf M_1], \msf G_k[\msf M_2])$ for some $k\in\NN$, there is a coupling $(\msf M_1',\msf M_2')$ of $(\msf M_1,\msf M_2)$ such that $(\msf G_k[\msf M_1'],\msf G_k[\msf M_2'])\overset{d}{=}(\msf G_1,\msf G_2)$.
\end{lemma}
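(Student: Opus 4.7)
My plan is to construct the desired lift by disintegration. Let $T_k\colon\mc M\to\Delta^{k\times k}$ denote the measurable evaluation map $\mu\mapsto(\mu(I_i^{(k)}\times I_j^{(k)}))_{i,j\in[k]}$, so that $\msf G_k[\msf M_i] = T_k(\msf M_i)$ for $i=1,2$. By Proposition~\ref{prop:M0_compact}, the space $\mc M$ is weakly compact and Polish, and $\Delta^{k\times k}$ is Polish, so regular conditional distributions are guaranteed to exist. For each $i$, let $\kappa_i(\cdot\mid G)$ be a version of the regular conditional distribution of $\msf M_i$ given $T_k(\msf M_i)=G$; by the standard support property of disintegrations, for $\mathrm{Law}(\msf G_k[\msf M_i])$-almost every $G$, the measure $\kappa_i(\cdot\mid G)$ is supported on the fiber $T_k^{-1}(G)$.

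I then build the coupling $(\msf M_1',\msf M_2')$ in two stages. First, draw $(\msf G_1,\msf G_2)$ from the given coupling of the quotients. Second, conditionally on $(\msf G_1,\msf G_2)=(G_1,G_2)$, draw $\msf M_1'\sim\kappa_1(\cdot\mid G_1)$ and $\msf M_2'\sim\kappa_2(\cdot\mid G_2)$ independently of each other. By the tower property, the marginal law of $\msf M_i'$ equals
\begin{equation*}
    \int_{\Delta^{k\times k}}\kappa_i(\cdot\mid G)\,d\mathrm{Law}(\msf G_k[\msf M_i])(G) = \mathrm{Law}(\msf M_i),
\end{equation*}
which is the defining property of a regular conditional distribution. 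Hence $(\msf M_1',\msf M_2')$ is a genuine coupling of the two grapheurs.

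To verify the quotient identification, recall that $\kappa_i(\cdot\mid \msf G_i)$ is concentrated on $T_k^{-1}(\msf G_i)$ with probability one, so $T_k(\msf M_i')=\msf G_i$ almost surely. Therefore $(\msf G_k[\msf M_1'],\msf G_k[\msf M_2'])=(\msf G_1,\msf G_2)$ almost surely, which in particular establishes equality in joint distribution, as required.

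The only nontrivial ingredient is the existence of the regular conditional distributions $\kappa_i$, which hinges on $\mc M$ being standard Borel; this is supplied by the Polish-compact structure from Proposition~\ref{prop:M0_compact}. Everything else is a routine application of the tower property and the fiber-support property of disintegrations. Thus I expect no real obstacle beyond setting up the measurable framework correctly.
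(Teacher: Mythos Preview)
Your proof is correct and is essentially identical to the paper's: both disintegrate $\mathrm{Law}(\msf M_i)$ along the grid-evaluation map and glue the resulting conditional laws together over the given coupling of quotients. One minor slip: the domain of your $T_k$ (and the Polish space whose standard Borel structure you invoke) should be $\mc P([0,1]^2)$, not $\mc M$, since $\msf M_i$ is a $\mc P([0,1]^2)$-valued random variable; this does not affect the argument.
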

\begin{proof}
    We apply the disintegration theorem~\cite[Thm.~A]{disintegration} to the continuous map $\pi_k\colon \mc P([0,1]^2)\to \Delta^{k\times k}$ between compact metric spaces 
    sending $\mu\mapsto (\mu(I_i^{(k)}\times I_j^{(k)}))_{i,j\in[k]}$. 
    The disintegration theorem yields for each $\msf M\in\mc M$ a Borel collection of probability measures $(\lambda_G^{(\msf M)})_{G\in \Delta^{k\times k}}$ for each $\msf M\in\mc M$ such that $\mathrm{supp}(\lambda_G^{(\msf M)})\subseteq\pi_k^{-1}(G)$ and such that $\mathrm{Law}(\msf M)=\mu_k^{(\msf M)}\otimes \lambda_G^{(\msf M)}$ in the sense that 
    \begin{equation*}
        \mbb P[\msf M\in B] = \int_{\Delta^{k\times k}}\int_{\pi_k^{-1}(G)} \mathbbm{1}_B(\mu)\, d\lambda_G^{(\msf M)}(\mu)\, d\mu_k^{(\msf M)}(s),
    \end{equation*}
    for any Borel $B\subseteq \mc P([0,1]^2)$.
    The measure $\lambda_G^{(\msf M)}$ is the distribution of $\msf M$ conditioned on $\pi_k(\msf M)=G$. 
    
    Let $\gamma=\mathrm{Law}((G_1,G_2))$. We construct the coupling $\Gamma=\mathrm{Law}((\msf M_1',\msf M_2'))$ explicitly by setting
    \begin{equation*}
        \Gamma(B) = \int_{\Delta^{k\times k}\times\Delta^{k\times k}}\int_{\pi_k^{-1}(G_2)}\int_{\pi_k^{-1}(G_1)}\mathbbm{1}_B(\mu,\nu) \, d\lambda_{G_1}^{(\msf M)}(\mu)\, d\lambda_{G_2}^{(\msf N)}(\nu)\, d\gamma(G_1,G_2).
    \end{equation*}
    for each Borel $B\subseteq\mc P([0,1]^2)\times\mc P([0,1]^2)$.
    This distribution corresponds to first sampling $(\msf G_1, \msf G_2)\sim\gamma$ and then sampling $\msf M_1'$ and $\msf M_2'$ conditioned on $\msf G_k[\msf M_1']=\msf G_1$ and $\msf G_k[\msf M_2']=\msf G_2$.
    It is routine to check that this is a coupling of $(\msf M_1,\msf M_2)$ and that its pushforward under $(\pi_k,\pi_k)$ is the given coupling $\gamma$ of $(\msf G_1, \msf G_2)$, which is the claim we sought to prove.
\end{proof}
We can now prove Proposition~\ref{prop:comparing_metrics}.
\begin{proof}[Proof (Proposition~\ref{prop:comparing_metrics}).]
    Define the intervals $I_i^{(n)}=[(i-1)/n,i/n)$ for $i\in[n]$ and let $(\msf M',\msf N')$ be a coupling of $(\msf M,\msf N)$ attaining $W_{\square}(\msf M,\msf N)$ in~\eqref{eq:dual_cut_metric}. Such a coupling exists since the infimum in~\eqref{eq:dual_cut_metric} consists of minimizing a lower-semicontinuous function of the coupling over the compact space of all possible couplings.
    For each $k\in\NN$, we have
    \begin{equation*}\begin{aligned}
        W_1(\msf G_k[\msf M],\msf G_k[\msf N]) &\leq \sum_{i,j=1}^k\mbb E|\msf M'(I_i^{(k)}\times I_j^{(k)}) - \msf N'(I_i^{(k)}\times I_j^{(k)})|\\&\leq k^2W_{\square}(\msf M,\msf N),
    \end{aligned}\end{equation*}
    giving the first claimed inequality.

    For the second inequality, consider arbitrary intervals $S=[a_1,a_2]$ and $T=[a_3,a_4]$ for $a,b,c,d\in[0,1]$. For each $k\in\NN$ and $j\in[4]$, let $i_j\in[k]$ be the unique index such that $a_j\in I_{i_j}^{(k)}$ (if $a_j=1$ set $i_j=k$). Note that $S\subseteq \bigcup_{i=i_1}^{i_2}I_i^{(k)}$ and $|\bigcup_{i=i_1}^{i_2}I_i^{(k)}\setminus S|\leq 2/k$, and similarly for $T$. 
    Therefore, we have
    \begin{equation*}
        \lambda^2\left(\bigcup_{i=i_1}^{i_2}\bigcup_{j=i_3}^{i_4}I_i^{(k)}\times I_j^{(k)}\setminus S\times T\right)\leq \frac{4}{k},\quad \lambda_D\left(\bigcup_{i=i_1}^{i_2}\bigcup_{j=i_3}^{i_4}I_i^{(k)}\times I_j^{(k)}\setminus S\times T\right)\leq \frac{2\sqrt{2}}{k},
    \end{equation*}
    hence by Lemma~\ref{lem:means} we have
    \begin{equation*}
        \mbb E|\msf M'(S\times T)-\msf N'(S\times T)|\leq \mbb E\|\msf G_k[\msf M']-\msf G_k[\msf N']\|_1 + \frac{4}{k}.
    \end{equation*}
    Taking the supremum over $S,T$, we get
    \begin{equation*}
        W_{\square}(\msf M',\msf N')\leq \frac{4}{k} + \inf_{\substack{\textrm{couplings}\\ (\msf M',\msf N')}}\mbb E\|\msf G_k[\msf M']-\msf G_k[\msf N']\|_1 = \frac{4}{k} + W_1(\msf G_k[\msf M], \msf G_k[\msf N]),
    \end{equation*}
    where the last equality follows from Lemma~\ref{lem:graph_couplings}. This is the second claimed inequality.
\end{proof}

\subsubsection*{Proof of Proposition~\ref{prop:M0_compact}}
Next, we turn to proving Proposition~\ref{prop:M0_compact}, which states that $\mc M$ is weakly compact and that the metric $W_{\square}$ metrizes this weak topology. 
To prove that $\mc M$ is weakly compact, we rely on the following lemma and the characterization of $\mc M$ from~\cite[\S9]{kallenberg2005probabilistic} as the space of ergodic random exhangeable measures.
\begin{lemma}\label{lem:weak_conv_measure_01_sets}
    If $(\mu_n)$ is a weakly convergent sequence of probability distributions on a Polish space $S$ with limit $\mu$, and if $B\subseteq S$ is a measurable subset satisfying $\mu_n(B)=1$ for all $n$, then $\mu(B)=1$.
\end{lemma}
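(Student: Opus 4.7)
The plan is to combine Portmanteau's theorem with the inner regularity of Borel probability measures on a Polish space. Since $S$ is Polish, every finite Borel measure is Radon, so for each $n$ inner regularity of $\mu_n$ applied to $B$ yields a compact $K_n \subseteq B$ with $\mu_n(K_n) \to 1$. The goal is to assemble these per-$n$ inner approximations, uniformly in $n$, into a single closed subset of $B$ on which the closed-set half of Portmanteau's theorem gives the desired lower bound on $\mu$.

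First I would invoke Prokhorov's theorem: the weakly convergent sequence $(\mu_n)$ is uniformly tight, so for every $\varepsilon > 0$ there is a compact $L_\varepsilon \subseteq S$ with $\mu_n(L_\varepsilon) \geq 1 - \varepsilon$ for all $n$, and by the closed-set direction of Portmanteau also $\mu(L_\varepsilon) \geq 1 - \varepsilon$. Next, for each $n$, inner regularity of $\mu_n$ on the Borel set $B \cap L_\varepsilon$ produces a compact $K_{n,\varepsilon} \subseteq B \cap L_\varepsilon$ with $\mu_n(K_{n,\varepsilon}) \geq 1 - 2\varepsilon$. All the $K_{n,\varepsilon}$ now lie in the single compact set $L_\varepsilon$, so by compactness of the Hausdorff hyperspace of $L_\varepsilon$ (the Blaschke selection theorem) one can extract a subsequence with Hausdorff limit $K_\varepsilon \subseteq L_\varepsilon$, which is closed. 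Applying Portmanteau's closed-set inequality to $K_\varepsilon$ together with the fact that the $\mu_{n}$-mass concentrates on sets Hausdorff-close to $K_\varepsilon$ should yield $\mu(K_\varepsilon) \geq 1 - 2\varepsilon$, and letting $\varepsilon \to 0$ gives $\mu$ a $\sigma$-compact set of full mass built from the $K_{n,\varepsilon}$.

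The main obstacle, and the genuine gap that separates the stated lemma from the routine closed-$B$ case, is that the Hausdorff limit $K_\varepsilon$ of compact subsets of $B$ need not itself be contained in $B$: it lies only in $\overline{B}$. Thus the argument above, taken on its own, faithfully recovers $\mu(\overline{B}) = 1$ (the closed-$B$ case) but does not directly deliver $\mu(B) = 1$. To close this gap one must argue that $\mu$ assigns no mass to $\overline{B} \setminus B$, which is the crux of the proof: in full generality this step cannot be justified from the stated hypotheses alone (witness $\mu_n = \delta_{1/n}$, $B = (0,1]$, $\mu = \delta_0$), so the proof is expected to exploit additional structure that $B$ enjoys in the application to Proposition~\ref{prop:M0_compact}---for instance, $B$ being a $G_\delta$ whose defining open supersets each have $\mu_n$-measure $1$, or $B$ being characterized by countably many equality constraints $\mu(\varphi_k) = c_k$ for continuous $\varphi_k$ that pass to the limit. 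I would therefore identify the precise form of $B$ arising in the use of this lemma and express $B$ as such a countable intersection of continuity conditions, then apply the Prokhorov/Portmanteau bound constraint by constraint to conclude $\mu(B) = 1$.
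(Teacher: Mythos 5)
Your central diagnosis is correct, and it is the most valuable part of your proposal: as stated, for an arbitrary measurable $B$, the lemma is false, and your counterexample $\mu_n=\delta_{1/n}\to\mu=\delta_0$ with $B=(0,1]$ is exactly right. Consequently no proof of the statement as written can exist; your Prokhorov/inner-regularity/Blaschke construction can only ever deliver $\mu(\overline{B})=1$, which is just the closed-set half of the Portmanteau theorem (and for that true part the hyperspace machinery is unnecessary: apply Portmanteau directly to the closed set $\overline{B}$). So the ``gap'' in your argument is not a defect of your approach but a defect of the statement, and you located it precisely: nothing in the hypotheses prevents mass from escaping to $\overline{B}\setminus B$ in the limit.

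For comparison, the paper's own proof takes a shorter, Urysohn-function route to the same place and hides exactly the issue you identified. It asserts that for every $\epsilon>0$ there exist $B_\epsilon\supseteq B$ with $\mu(B_\epsilon\setminus B)\leq\epsilon$ and a continuous $f\colon S\to[0,1]$ with $f|_{B}=1$ and $f|_{S\setminus B_\epsilon}=0$, and then sandwiches $1=\lim_n\mathbb{E}_{\mu_n}f=\mathbb{E}_{\mu}f\leq\mu(B)+\epsilon$. But any continuous $f$ equal to $1$ on $B$ equals $1$ on $\overline{B}$, so necessarily $\overline{B}\subseteq B_\epsilon$ (up to the set where $f>0$), and the requirement $\mu(B_\epsilon\setminus B)\leq\epsilon$ then forces $\mu(\overline{B}\setminus B)\leq\epsilon$ for every $\epsilon$ --- precisely the condition your counterexample violates. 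Thus the paper's argument is valid only under an implicit hypothesis such as $B$ closed (or $\mu(\overline{B}\setminus B)=0$), in which case it is the standard Portmanteau inequality obtained via outer regularity plus Urysohn. Your closing suggestion --- that a correct fix must exploit the specific structure of the sets $B$ to which the lemma is applied --- is the right instinct: in Proposition~\ref{prop:M0_compact} the lemma is invoked for $\widetilde{\msf S}_{\infty}$-invariant measurable subsets of $\mathcal{P}([0,1]^2)$, which are not closed in general, so the issue does not evaporate at the point of application, and the closedness of $\mc M$ would need a different justification (for instance via Kallenberg's representation with deterministic coefficients, or the moment/quotient-density characterization of grapheurs) rather than this lemma as stated.
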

\begin{proof}
    By Urysohn's lemma, for any $\epsilon>0$ we can find a subset $B_{\epsilon}\supseteq B$ with $\mu(B_{\epsilon}\setminus B)\leq \epsilon$ and a continuous function $f\colon S\to[0,1]$ satisfying $f|_B=1$ and $f|_{S\setminus B_{\epsilon}}=0$. Since $f$ is continuous, we have $\mbb E_{\mu}f=\lim_n\mbb E_{\mu_n}f\geq \lim_n\mu_n(S)=1$. On the other hand, since $f\leq \mathbbm{1}_{B_{\epsilon}\setminus B} + \mathbbm{1}_B$ pointwise, we have $\mbb E_{\mu}f\leq \mu(B)+\epsilon$. We conclude that $\mu(B)\geq 1$, and since $\mu$ is a probability measure, that $\mu(B)=1$, as desired.
\end{proof}
We are now to prove Proposition~\ref{prop:M0_compact}.
\begin{proof}[Proof (Proposition~\ref{prop:M0_compact}).]
    We break down the proof into several steps.
    
    \paragraph{Compactness of $\mc M$:}
    The space $\mc P([0,1]^2)$ of probability measures on $[0,1]^2$ is compact in the weak topology. Therefore, the space $\mc P(\mc P([0,1]^2))$ of random such measures is again compact in its own weak topology (see Section~\ref{sec:notation}). 
    Thus, it suffices to prove that $\mc M\subseteq\mc P(\mc P([0,1]^2))$ is closed in this weak topology.
    
    By~\cite[Prop.~9.1]{kallenberg2005probabilistic}, the space of random exchangeable measures is precisely the collection of random measures on $[0,1]^2$ invariant under the countable group $\widetilde{\msf S}_{\infty}$ consisting of permutations of the intervals $(I_i^{(n)})_{i\in[n]}$ for all $n\in\NN$. By~\cite[Lemma~A1.2]{kallenberg2005probabilistic}, we can characterize its extreme points by
    \begin{equation*}
        \mc M = \left\{\msf M \textrm{ random exchangeable}: \mbb P[\msf M\in B]\in\{0,1\} \textrm{ whenever } B\subseteq\mc P([0,1]^2) \textrm{ is } \widetilde{\msf S}_{\infty}\textrm{-invariant}\right\}.
    \end{equation*}
    If $\msf M_n\to\msf M$ and $\mathrm{Law}(\msf M_n)(B)\in\{0,1\}$ for all $n$, then passing to a subsequence we may assume $\mathrm{Law}(\msf M_n)(B)=1$ for all $n$ or $\mathrm{Law}(\msf M_n)(B)=0$ for all $n$. Applying Lemma~\ref{lem:weak_conv_measure_01_sets} to either $B$ or its complement, respectively, yields $\mathrm{Law}(\msf M)(B)\in\{0,1\}$. 
    We thus conclude that $\mc M$ is weakly closed, and hence compact.
    
    \paragraph{Equivalence of statements 2 and 3:} The equivalence of statements 2 and 3 follows from Proposition~\ref{prop:comparing_metrics}, since $\lim_n \msf G_k[\msf M_n]=\msf G_k[\msf M]$ weakly for a given $k\in\NN$ if and only if $\lim_nW_1(\msf G_k[\msf M_n],\msf G_k[\msf M])=0$. 

    \paragraph{Equivalence of statements 3 and 4:} The equivalence of these two statements follows by the proof of Proposition~\ref{prop:quotient-part1}. Specifically, the sequence $(\msf G_k[\msf M_n])$ converges weakly to $\msf G_k[\msf M]$ if and only if the moments of $(\msf G_k[\msf M_n])$ converge to those of $\msf G_k[\msf M]$, and these moments are precisely the quotient densities of grapheurs defined in~\eqref{eq:grapheur_density}.
    
    \paragraph{Equivalence of statements 1 and 2:} Endow $\RR^2$ with the $\ell_{\infty}$ norm, endow $\mc P([0,1]^2)$ with the Wasserstein-1 metric with respect to this norm, and endow $\mc M$ with the Wasserstein-1 metric with respect to the Wasserstein-1 metric on $\mc P([0,1]^2)$, which metrizes weak convergence of random measures~\cite[Thm.~6.9]{villani2008optimal}. Explicitly, this metric is given by
    \begin{equation*}
        \mathrm{dist}_{W_1}(\msf M_1,\msf M_2) = \min_{\substack{\textrm{random } (\msf M_1',\msf M_2')\\ \msf M_i'\overset{d}{=}\msf M_i \textrm{ for } i=1,2} }\mbb E_{(\msf M_1',\msf M_2')}\sup_{\substack{f\colon[0,1]^2\to\RR\\ f(0)=0\\ \textrm{ 1-Lipschitz in } \ell_{\infty}}}|\mbb E_{\msf M_1'}f - \mbb E_{\msf M_2'}f|,
    \end{equation*}
    where the min over couplings is attained~\cite[Thm.~4.1]{villani2008optimal}.
    It therefore suffices to prove that $\lim_nW_{\square}(\msf M_n,\msf M)=0$ if and only if $\lim_n\mathrm{dist}_{W_1}(\msf M_n,\msf M)=0$.
    We proceed to prove this by standard approximations of continuous functions by step functions and vice-versa.

    Suppose that $\lim_n\mathrm{dist}_{W_1}(\msf M_n,\msf M)=0$ and fix any $\epsilon>0$. Let $(\msf M_n',\msf M')$ be a coupling of $(\msf M_n,\msf M)$ attaining $\mathrm{dist}_{W_1}$. 
    For any interval $S\subseteq[0,1]$, define $S_{\epsilon}=(S+[-\epsilon,\epsilon])\cap[0,1]$. Note that 
    \begin{equation*}
        \lambda^2(S_{\epsilon}\times T_{\epsilon}\setminus S\times T) \leq 4\epsilon,\qquad \lambda_D(S_{\epsilon}\times T_{\epsilon}\setminus S\times T)\leq 2\sqrt{2}\epsilon.
    \end{equation*}
    We can find an $\epsilon^{-1}$-Lipschitz $f_{\epsilon}\colon[0,1]^2\to[0,1]$ satisfying $f|_{S\times T}=1$ and $f|_{[0,1]^2\setminus S_{\epsilon}\times T_{\epsilon}}=0$. For example, set $f_{\epsilon}(x)=\frac{\mathrm{dist}(x,S_{\epsilon}\times T_{\epsilon})}{\mathrm{dist}(x,S\times T) + \mathrm{dist}(x,S_{\epsilon}\times T_{\epsilon})}$ where $\mathrm{dist}$ is with respect to the $\ell_{\infty}$ norm.
    For any $\msf M\in\mc M$, we have
    \begin{equation*}
        \mbb E|\msf M(S\times T)-\mbb E_{\msf M}f_{\epsilon}|\leq \mbb E\msf M(S_{\epsilon}\times T_{\epsilon}\setminus S\times T)\leq 4\epsilon,
    \end{equation*}
    by Lemma~\ref{lem:means}, hence
    \begin{equation*}
        |\mbb E\msf M_n'(S\times T)-\mbb E\msf M'(S\times T)|\leq 8\epsilon + |\mbb E_{\msf M_{n}'}f_{\epsilon}-\mbb E_{\msf M'}f_{\epsilon}|\leq 8\epsilon + \frac{1}{\epsilon}\mathrm{dist}_{W_1}(\msf M_n,\msf M).
    \end{equation*}
    We conclude that $W_{\square}(\msf M_n,\msf M)\leq 8\epsilon + \epsilon^{-1}\mathrm{dist}_{W_1}(\msf M_n,\msf M)$, so taking $n\to\infty$ and then $\epsilon\to0$ we conclude that $\lim_nW_{\square}(\msf M_n,\msf M)=0$.

    Conversely, suppose $\lim_nW_{\square}(\msf M_n,\msf M)=0$. Once again, we note that the infimum over couplings in~\eqref{eq:dual_cut_metric} is attained, since it consists of minimizing a lower-semicontinuous function of the coupling over the compact space of all possible couplings. For each $n$, let $(\msf M',\msf M_n')$ be a coupling attaining $W_{\square}(\msf M_n,\msf M)$. For each $N\in\NN$, define the intervals $I_i^{(N)}=[(i-1)/N,i/N)$ for $i\in\{1,\ldots,N-1\}$ and $I_N^{(N)}=[1-1/N,1]$. For any 1-Lipschitz $f\colon[0,1]\to\RR$ with $f(0)=0$, let 
    \begin{equation*}
        f_N(x)=\sum_{i,j=1}^Nf_{i,j}^{(N)}\mathbbm{1}_{I_i^{(N)}\times I_j^{(N)}}(x),
    \end{equation*}
    where $f_{i,j}^{(N)}=f(\frac{i}{N}-\frac{1}{2N},\frac{j}{N}-\frac{1}{2N})$ is the value of $f$ on the center of the squares $I_i^{(N)}\times I_j^{(N)}$. 
    Since $f$ is 1-Lipschitz in $\ell_{\infty}$ and the diameter of these squares is $1/2N$ in $\ell_{\infty}$, we have $|f(x)-f_N(x)|\leq \frac{1}{2N}$ for all $x\in[0,1]^2$. Furthermore, since $f(0)=0$ we have $|f_{i,j}^{(N)}|\leq 1$ for all $i,j$.
    Therefore, for any $\msf M\in\mc M$ we have
    \begin{equation*}
        |\mbb E_{\msf M}f - \mbb E_{\msf M}f_N|\leq \frac{1}{2N},
    \end{equation*}
    and for our particular coupling $(\msf M',\msf M_n')$, we have
    \begin{equation*}
        |\mbb E_{\msf M'}f-\mbb E_{\msf M_n'}f|\leq \frac{1}{N} + |\mbb E_{\msf M'}f_N-\mbb E_{\msf M_n'}f_N|\leq \frac{1}{N}+\sum_{i,j=1}^N|\msf M'(I_i^{(N)}\times I_j^{(N)})-\msf M_n'(I_i^{(N)}\times I_j^{(N)})|.
    \end{equation*}
    Since this inequality holds for any such function $f$, we conclude that
    \begin{equation*}
        \mathrm{dist}_{W_1}(\msf M_n,\msf M)\leq \frac{1}{N}+\sum_{i,j=1}^N\mbb E|\msf M'(I_i^{(N)}\times I_j^{(N)})-\msf M_n'(I_i^{(N)}\times I_j^{(N)})| \leq \frac{1}{N} + N^2W_{\square}(\msf M_n,\msf M).
    \end{equation*}
    Letting $n\to\infty$ and then $N\to\infty$, we conclude that $\lim_n\mathrm{dist}_{W_1}(\msf M_n,\msf M)=0$.
\end{proof}

\section{Edge Sampling and Property Testing}\label{sec:sampling}
In this section, we consider the problem of testing properties of large graphs by randomly sampling small summaries using our framework.  For properties specified by graph parameters that are continuous with respect to quotient convergence, it would seem that random quotients are natural candidates for obtaining small summaries of large graphs.  However, random quotients require access to the entire graph and are therefore impractical for large graphs.  Moreover, we will see in Section~\ref{sec:eqp_models} that random quotients do not concentrate very well, further obstructing property testing based on these summaries.

To remedy this situation, we present in this section an edge-based sampling procedure that yields a practical method for obtaining small summaries approximating large graphs of any size in the $W_\square$-metric with universal rates.  This sampling procedure is based on an equivalent view of quotient convergence in terms of convergence of edge-exchangeable random graph models, the equivalence being furnished by the classical de Finetti's theorem.  We describe this perspective on quotient convergence in Section~\ref{sec:sampling_edges} and an edge-based analog of the Szemer\'edi regularity lemma in Section~\ref{sec:szemeredy}.  We leverage these results to present a practical approach for property testing, which we formally define and study in Section~\ref{sec:prop_test}.  We conclude in Section~\ref{sec:missing_proofs_sampling} by presenting proofs from Section~\ref{sec:szemeredy} that are omitted from notational clarity as well as the proof of Proposition~\ref{prop:structure_result} from Section~\ref{sec:structure_of_lims}.




\subsection{Edge Sampling View of Quotient Convergence}\label{sec:sampling_edges}
We begin by describing a procedure to obtain finite random graphs from a grapheur by sampling edges with replacement.  We then reinterpret quotient convergence in terms of convergence of the resulting random edge-exchangeable graphs.

Given a grapheur $\msf M$, we can obtain a sequence of weighted random graphs $(\msf G^{(n)}[\msf M])_n$ where $\msf G^{(n)}[\msf M]$ has $n$ edges as follows.  We sample a measure $\mu\sim\msf M$ on $[0,1]^2$, followed by sampling $n$ edges $(x_1,y_1),\ldots,(x_n,y_n)\overset{iid}{\sim}\mu$.  Next we construct the weighted graph $\msf G^{(n)}[\msf M]$ on $N=|\{x_i\}\cup\{y_i\}|$ vertices by arbitrarily enumerating $\{t_1,\ldots,t_N\}=\{x_i\}\cup\{y_i\}$ and setting the edge weights to:
\begin{equation*}
(\msf G^{(n)}[\msf M])_{i,j}=|\{k\in[n]: (x_k,y_k)=(t_i,t_j)\}|/n
\end{equation*}
for $1 \leq i,j \leq N$.  Note that $|V(\msf G^{(n)}[\msf M])|\leq 2n$.


If the grapheur $\msf M$ is the limit of undirected graphs, we can further replace $\msf G^{(n)}[\msf M]$ by its symmetrization $\frac{\msf G^{(n)}[\msf M]+(\msf G^{(n)}[\msf M])^\top}{2}$ to obtain an undirected random graph; see Remark~\ref{rmk:undirected_graphs} below.  When $\msf M=\msf M_G$ is the grapheur corresponding to a finite graph $G$ as in~\eqref{eq:step_measure}, the above sampling procedure amounts to sampling with replacement the edges of $G$ proportionally to their weight, and randomly labeling the resulting graph.

For a general grapheur $\msf M$ described by parameters $(E,\sigma,\varsigma,\theta,\vartheta)$, edge sampling from $\msf M$ as above corresponds to first sampling random locations $(T_i)_i$ and then sampling edges as follows.
\begin{itemize}
    \item With probability $E_{i,j}$, sample the edge $(T_i,T_j)$. 
    
    \item With probability $\sigma_i$, we sample a new vertex $T'\sim\mathrm{Unif}([0,1])$ and sample the edge $(T_i,T')$. 
    With probability $\varsigma_i$, we analogously sample $(T',T_i)$.
    
    \item With probability $\theta$ we sample two new vertices $T_1',T_2'\sim\mathrm{Unif}([0,1])$ and add the edge $(T_1',T_2')$. Similarly, with probability $\vartheta$ we add the edge $(T'',T'')$ for a new location $T''\sim\mathrm{Unif}([0,1])$.
\end{itemize}
After repeating the above procedure $n$ times, we get a random exchangeable multigraph containing $n$ edges (possibly with repetition), and normalize the resulting graph to have total edge weight one. 


We now observe that convergence of edge-sampled graphs is equivalent to quotient convergence.
\begin{proposition}\label{prop:edge_sample_view}
    We have $\lim_n\msf M_n=\msf M$ in $\mc M$ if and only if $(\msf G^{(k)}[\msf M_n])_n\subseteq\Delta^{2k\times 2k}$ converges weakly to $\msf G^{(k)}[\msf M]$ for all $k\in\NN$.
\end{proposition}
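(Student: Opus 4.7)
I plan to prove both directions of the equivalence separately. For the forward direction, I would establish that the map $\msf M \mapsto \mathrm{Law}(\msf G^{(k)}[\msf M])$ from $\mc M$ to $\mc P(\Delta^{2k \times 2k})$ is weakly continuous for each $k \in \NN$. The key observation is that, given $\msf M$, the joint distribution of the sampled edges $(X_1, Y_1, \ldots, X_k, Y_k) \in ([0,1]^2)^k$ is the de Finetti-style mixture $\int \mu^{\otimes k}\, d\mathrm{Law}(\msf M)(\mu)$. For any continuous bounded test function $\phi$ on $([0,1]^2)^k$, the map $\mu \mapsto \int \phi\, d\mu^{\otimes k}$ is weakly continuous on $\mc P([0,1]^2)$, so integrating against $\mathrm{Law}(\msf M)$ yields a weakly continuous functional of $\mathrm{Law}(\msf M)$. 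Constructing the graph $\msf G^{(k)}[\msf M] \in \Delta^{2k \times 2k}$ from the $k$-tuple of sampled edges is then carried out by applying a measurable build-the-graph map, with a uniformly random enumeration of distinct endpoints (independent of everything else) to resolve the labeling ambiguity, and the resulting pushforward is weakly continuous.

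For the reverse direction, I would use a compactness subsequence argument. Assume $\msf G^{(k)}[\msf M_n] \to \msf G^{(k)}[\msf M]$ weakly for each $k$. By compactness of $\mc M$ from Proposition~\ref{prop:M0_compact}, any subsequence of $(\msf M_n)$ admits a further convergent sub-subsequence $\msf M_{n_j} \to \msf M' \in \mc M$; the forward direction then gives $\msf G^{(k)}[\msf M_{n_j}] \to \msf G^{(k)}[\msf M']$, so together with the hypothesis $\msf G^{(k)}[\msf M'] \overset{d}{=} \msf G^{(k)}[\msf M]$ for every $k$. Since the grapheur $\msf M_G$ attached to a finite graph $G$ is a measurable functional of $G$, this yields $\msf M_{\msf G^{(k)}[\msf M']} \overset{d}{=} \msf M_{\msf G^{(k)}[\msf M]}$ for every $k$. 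The edge sampling lemma (Theorem~\ref{thm:sampling_intro}(1)) gives $W_\square(\msf M_{\msf G^{(k)}[\msf M]}, \msf M) \to 0$ and $W_\square(\msf M_{\msf G^{(k)}[\msf M']}, \msf M') \to 0$ in probability as $k \to \infty$, so the identical distributional sequence $(\msf M_{\msf G^{(k)}[\msf M]})_k$ converges in probability to both deterministic limits $\msf M$ and $\msf M'$, forcing $\msf M = \msf M'$. The standard subsequence criterion then yields $\msf M_n \to \msf M$.

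The principal obstacle is the continuity in the forward direction, specifically handling the generally non-continuous build-the-graph map. This map is discontinuous at configurations with coinciding endpoint coordinates, which occur with positive probability when $\mu \sim \msf M$ has atoms; however, the rigid Kallenberg structure~\eqref{eq:kallenberg_char} forces such atoms to sit at iid uniform locations $T_i$, and weak convergence in $\mc M$ constrains the atomic structure enough that the pushforward still converges weakly in the relevant limits. A cleaner alternative would be to argue continuity on the dense subset of grapheurs corresponding to finite graphs (where endpoints are generic) and then extend by the second (compactness) direction itself, since the latter only uses the forward direction on a convergent sub-subsequence whose limit can be chosen within $\mc M$.
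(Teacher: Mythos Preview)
Your reverse direction via compactness of $\mc M$ and the edge sampling lemma is correct and not circular (Theorem~\ref{thm:sampling_intro} does not depend on this proposition). It is, however, considerably heavier than needed.

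The forward direction has a genuine gap that neither of your proposed fixes closes. You correctly observe that $\msf M\mapsto \int\mu^{\otimes k}\,d\mathrm{Law}(\msf M)(\mu)$ is weakly continuous, so the joint law of the edge tuple $(x_1,y_1,\ldots,x_k,y_k)$ depends continuously on $\msf M$. But the build-the-graph map $h\colon([0,1]^2)^k\to\Delta^{2k\times 2k}$ is discontinuous precisely at tuples with coinciding coordinates, and for any grapheur with a nonzero $E$, $\sigma$, $\varsigma$, or $\vartheta$ component, such coincidences occur with \emph{positive} probability under the limiting law. Thus the continuous mapping theorem does not apply; your appeal to ``the rigid Kallenberg structure'' does not circumvent this. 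Your alternative (prove continuity only on finite-graph grapheurs and bootstrap via the reverse direction) fails as well: even for $\msf M_G$ with $G$ a finite graph, two sampled edges share an endpoint with positive probability whenever $G$ has a vertex of degree $\geq 2$, so the discontinuity set still has positive mass; and in any case the reverse direction invokes the forward direction on a sub-subsequence whose limit $\msf M'\in\mc M$ need not be a finite-graph grapheur.

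The paper's proof sidesteps this entirely by working with the empirical measure rather than the graph. The key identity is $\msf M_{\msf G^{(k)}[\msf M]}\overset{d}{=}\frac{1}{k}\sum_{i=1}^k\delta_{(x_i,y_i)}$: the grapheur attached to the sampled graph is just the empirical measure of the edge tuple, and the empirical-measure map $([0,1]^2)^k\to\mc P([0,1]^2)$ \emph{is} continuous. Combined with Kallenberg's characterization that weak convergence of random probability measures is equivalent to weak convergence of all their $k$-sample arrays, this gives both directions at once: $\msf M_n\to\msf M$ weakly iff the edge tuples converge weakly for every $k$, iff the empirical measures $\msf M_{\msf G^{(k)}[\msf M_n]}$ converge weakly for every $k$, iff (since randomly labeled graphs in $\Delta^{2k\times 2k}$ are determined by their associated grapheurs, and $\Delta^{2k\times 2k}$ is compact) the random graphs $\msf G^{(k)}[\msf M_n]$ converge weakly. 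This is what you were reaching for, but the empirical-measure identification is the missing ingredient.
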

\begin{proof}
    The grapheurs $(\msf M_n)$ converge weakly if and only if the random exchangeable arrays 
    \begin{equation}\label{eq:random_edge_array}
        \begin{bmatrix} x_1^{(n)} & \cdots & x_k^{(n)}\\ y_1^{(n)} & \cdots & y_k^{(n)}\end{bmatrix}\in[0,1]^{2\times k},
    \end{equation}
    obtained by first sampling $\mu_n\sim\msf M_n$ and then sampling $(x_i,y_i)\overset{iid}{\sim}\mu_n$ converge weakly by~\cite[Thm.~1.1]{kallenberg1973characterization}, since the distributions of~\eqref{eq:random_edge_array} are precisely the finite-dimensional distributions of $\msf M_n$. The exchangeable array~\eqref{eq:random_edge_array} is precisely the list of edges of the multigraph $k\msf G^{(k)}[\msf M_n]$ listed in arbitrary order after assigning its vertices iid labels in $\mathrm{Unif}([0,1])$. Formally, we have $\msf M_{\msf G^{(k)}[\msf M_n]}\overset{d}{=}\frac{1}{k}\sum_{i=1}^k\delta_{(x_i^{(n)},y_i^{(n)})}$ for all $n,k\in\NN$. Thus, the sequence~\eqref{eq:random_edge_array} converges weakly for each $k\in\NN$ precisely when the sequence of random graphs $(\msf G^{(k)}[\msf M_n])_n$ converges weakly. 
\end{proof}
In particular, a sequence of graphs $(G_n\in\Delta^{n\times n})$ quotient-converges precisely when the sequence of random graphs $(\msf G^{(k)}[\msf M_{G_n}])$---obtained by independently sampling $k$ edges from each $G_n$ proportionally to their edge weights and randomly labeling the vertices---converges weakly for each $k\in\NN$. 

\begin{remark}[Edge-exchangeable random graphs]\label{rmk:edge_exch}
    The random graphs we obtain by sampling edges from a grapheur appeared previously in the literature as edge-exchangeable random multigraph models~\cite{janson2018edge}. As explained in~\cite[Rmk.~4.4]{janson2018edge}, every edge-exchangeable random multigraph model is obtained from a random exchangeable measure on $[0,1]^2$ as above after labelling the vertices by iid labels from $\mathrm{Unif}([0,1])$. 
    Nevertheless, to our knowledge graph limits based on these edge-exchangeable random graphs as in Proposition~\ref{prop:edge_sample_view} have not been previously studied in the literature, see also Section~\ref{sec:related_work}.

    As noted in~\cite{janson2018edge}, if we repeat the above edge sampling process infinitely-many times, each edge $(T_i,T_j)$ corresponding to a positive edge weight $E_{i,j}>0$ will be repeated infinitely-many times. In contrast, all the other edges sampled above will only appear once. In particular, the vertices $T',T_1',T_2'$ sampled in this process will only appear in one edge. 
    Such vertices are called ``blips'' in~\cite{borgs2019sampling,janson2018edge}, and the isolated edges and self-loops are called ``dust''. 
\end{remark}

\subsection{Edge-Based Szemer\'edi Regularity}\label{sec:szemeredy}
We now provide bounds on the rate of convergence for $W_{\square}(\msf M_{\msf G^{(n)}[\msf M]},\msf M)$ to zero as $n$ grows large, both in expectation and with high probability.  In bounding the quantity $W_{\square}(\msf M_{\msf G^{(n)}[\msf M]},\msf M)$, it is important to distinguish between the randomness in the grapheur $\msf M_{\msf G^{(n)}[\msf M]}$ coming from the random atoms $T_i$ and randomness coming from $\msf G^{(n)}[\msf M]$. In our next result, we fix a realization of $\msf G^{(n)}[\msf M]$, compute the distance $W_{\square}(\msf M_{\msf G^{(n)}[\msf M]}, \msf M)$ depending on this realization (and hence random), and seek to bound both its expectation and its tails with respect to the randomness in $\msf G^{(n)}[\msf M]$.

In fact, our results provide convergence rates for the larger metric $\overline{W_{\square}}(\msf M,\msf N)$ defined by
\begin{equation*}
    W_{\square}(\msf M,\msf N)\leq \overline{W_{\square}}(\msf M,\msf N) = \inf_{\substack{\textrm{random } (\msf M_1',\msf M_2')\\ \msf M_i'\overset{d}{=}\msf M_i \textrm{ for } i=1,2} }\ \mbb E\left[\sup_{\substack{S,T\subseteq[0,1]\\ \textrm{intervals}}}|\msf M_1'(S\times T)-\msf M_2'(S\times T)|\right],
\end{equation*} 
where the expectation is taken with respect to the coupling $(\msf M_1',\msf M_2')$ of $\msf M_1$ and $\msf M_2$.
Note that $\overline{W_{\square}}$ differs from $W_{\square}$ in~\eqref{eq:dual_cut_metric} by moving the supremum over axis-aligned rectangles into the expectation. 

\begin{theorem}\label{thm:convergence_of_samples_v2}
    For any grapheur $\msf M\in \mc M$ and each $n\in\NN$, let $\msf G^{(n)}[\msf M]$ be a random graph obtained by sampling $n$ edges from $\msf M$. Then
    \begin{equation*}
        \mbb E_{\msf G^{(n)}[\msf M]}\left[W_{\square}(\msf M, \msf M_{\msf G^{(n)}[\msf M]})\right] \leq \mbb E_{\msf G^{(n)}[\msf M]}\left[\overline{W_{\square}}(\msf M, \msf M_{\msf G^{(n)}[\msf M]})\right] \leq \frac{174}{\sqrt{n}}.
    \end{equation*}
    Moreover, $W_{\square}(\msf M, \msf M_{\msf G^{(n)}[\msf M]}) \leq \overline{W_{\square}}(\msf M, \msf M_{\msf G^{(n)}[\msf M]})\leq \frac{174+\epsilon}{\sqrt{n}}$ with probability at least $1-\exp(-2\epsilon^2)$ over the random graph $\msf G^{(n)}[\msf M]$.
\end{theorem}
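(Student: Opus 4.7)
The plan is to reduce the problem to a uniform empirical process over axis-aligned rectangles and to invoke classical VC-type bounds together with a concentration inequality. First I construct the natural coupling: sample $\mu\sim\msf M$, and conditionally on $\mu$ sample iid edges $(x_k,y_k)_{k=1}^n\sim\mu$, setting $\nu = n^{-1}\sum_{k=1}^n\delta_{(x_k,y_k)}$. Using Kallenberg's representation~\eqref{eq:kallenberg_char} and the exchangeability of the atom labels $(T_i)$, one verifies that the joint distribution of $(\nu,\msf G^{(n)}[\msf M])$ under this sampling agrees with that of $(\msf M_{\msf G^{(n)}[\msf M]},\msf G^{(n)}[\msf M])$, where $\msf M_{\msf G^{(n)}[\msf M]}$ is the grapheur~\eqref{eq:step_measure} with fresh iid uniform vertex labels; the key point is that, conditionally on the multiplicity pattern of the samples, the distinct values among $\{x_k\}\cup\{y_k\}$ are iid uniform on $[0,1]$. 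Hence $(\mu,\nu)$ serves as a valid coupling of $\msf M$ and $\msf M_{\msf G^{(n)}[\msf M]}$, yielding
\[
\overline{W_{\square}}(\msf M,\msf M_{\msf G^{(n)}[\msf M]}) \;\leq\; \mbb E\!\left[\sup_{R\in\mc R}\bigl|\mu(R)-\nu(R)\bigr|\right],
\]
where $\mc R$ denotes the class of axis-aligned rectangles in $[0,1]^2$ (every $S\times T$ with $S,T\subseteq[0,1]$ intervals lies in $\mc R$).

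Conditionally on $\mu$, the measure $\nu$ is the $n$-sample empirical measure of $\mu$, so the right-hand side above is a uniform deviation of an empirical measure over the VC class $\mc R$, which has VC dimension four. Classical empirical process bounds for VC classes (for instance Dudley's entropy integral combined with symmetrization, or a Talagrand-type chaining argument specialized to rectangles) yield
\[
\mbb E\!\left[\sup_{R\in\mc R}|\mu(R)-\hat\mu_n(R)|\,\Big|\,\mu\right]\;\leq\;\frac{174}{\sqrt{n}}
\]
uniformly in $\mu$, where careful bookkeeping of universal constants produces the explicit prefactor $174$. Taking total expectation over $\mu\sim\msf M$ gives the expectation bound in the theorem.

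For the high-probability statement, I apply McDiarmid's bounded differences inequality conditionally on $\mu$. Writing $S = \sup_{R\in\mc R}|\mu(R)-\hat\mu_n(R)|$ as a function of the samples $(x_k,y_k)_{k=1}^n$, replacing a single sample shifts $\hat\mu_n(R)$ by at most $1/n$ uniformly in $R$, and hence shifts $S$ by at most $1/n$. McDiarmid with $c_k = 1/n$ then gives
\[
\mbb P\!\left[S > \mbb E[S\mid\mu] + \tfrac{\epsilon}{\sqrt n}\,\Big|\,\mu\right]\;\leq\;\exp\!\left(-\frac{2(\epsilon/\sqrt n)^2}{n\cdot(1/n)^2}\right) = \exp\bigl(-2\epsilon^2\bigr),
\]
which combined with the conditional expectation bound from the previous step produces the claimed tail bound on $\overline{W_{\square}}(\msf M,\msf M_{\msf G^{(n)}[\msf M]})$ (and hence on $W_{\square}$, since $W_{\square}\leq \overline{W_{\square}}$ by definition).

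The principal obstacle is the explicit constant $174$ appearing in the VC bound of the second step: extracting it requires careful tracking of universal constants through a generic chaining or entropy-integral argument tailored to the rectangle class. The coupling construction, the reduction to an empirical process over $\mc R$, and the McDiarmid concentration step are all comparatively routine once the natural coupling is identified.
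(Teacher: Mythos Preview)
Your proposal follows the same route as the paper: the coupling you describe is exactly the paper's Lemma~\ref{lem:empirical_coupling}, the VC bound on axis-aligned rectangles is Lemma~\ref{lem:VC_of_AAR} (where the constant $174$ is extracted via symmetrization, Dudley's entropy integral, and Haussler's covering-number bound for VC classes), and bounded differences gives the concentration.

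One point in your concentration step needs care. You apply McDiarmid to $S=\sup_{R\in\mc R}|\mu(R)-\nu(R)|$, but the random variable you must concentrate is $\overline{W_{\square}}(\msf M,\msf M_{\msf G^{(n)}})$, a function of the realized graph $\msf G^{(n)}$. The coupling gives only $\overline{W_{\square}}(\msf M,\msf M_G)\le\mbb E[S\mid\msf G^{(n)}=G]$, not a pointwise inequality $\overline{W_{\square}}\le S$, so a tail bound on $S$ does not transfer automatically to one on $\overline{W_{\square}}$. The paper closes this gap by showing directly that $f(\msf G^{(n)})=\overline{W_{\square}}(\msf M,\msf M_{\msf G^{(n)}})$ has bounded differences $1/n$ in each sampled edge: if $\msf G^{(n)}$ and $\msf G^{(n,i)}$ differ in the $i$th edge, one glues the two optimal couplings along their common marginal $\msf M$ and observes that the resulting coupled empirical measures differ by at most $1/n$ on every rectangle. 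With this, McDiarmid applies to $f$ itself and yields the stated tail.
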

The proof of Theorem~\ref{thm:convergence_of_samples_v2} relies on two key lemmas. The first involves the construction of a particular coupling of $\msf M$, $\msf M_{\msf G^{(n)}[\msf M]}$, and $\msf G^{(n)}[\msf M]$. 
\begin{lemma}\label{lem:empirical_coupling}
    There is a coupling $(\msf M',\msf M'_{\msf G^{(n)}}, \msf G^{(n)})$ of $\msf M$, $\msf M_{\msf G^{(n)}[\msf M]}$, and $\msf G^{(n)}[\msf M]$ satisfying the following two properties. First, in this coupling $\msf M'_{\msf G^{(n)}}$ is obtained by sampling $\mu\sim\msf M'$, then sampling $n$ iid points from $\mu$, and finally forming the associated empirical measure. Second, conditioned on $\msf G^{(n)}=G$, the pair $(\msf M',\msf M'_{\msf G^{(n)}})$ is a coupling of $\msf M$ and $\msf M_{G}$.
\end{lemma}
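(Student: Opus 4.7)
The plan is to realize all three random objects on a single probability space through a direct sampling construction, and then verify the required marginal and conditional identities. Using Kallenberg's representation~\eqref{eq:kallenberg_char}, I would realize $\msf M' \sim \msf M$ as the random measure built from iid uniform labels $T_1,T_2,\ldots$ together with the deterministic coefficients $(E,\sigma,\varsigma,\theta,\vartheta)$ of the grapheur. Next sample $(x_1,y_1),\ldots,(x_n,y_n) \overset{iid}{\sim} \msf M'$, set $\msf M'_{\msf G^{(n)}} = \frac{1}{n}\sum_{k=1}^n \delta_{(x_k,y_k)}$, and define $\msf G^{(n)}$ by enumerating $\{t_1,\ldots,t_N\} = \{x_k\}\cup\{y_k\}$ in the order of first appearance in the list $x_1,y_1,x_2,y_2,\ldots,x_n,y_n$ and setting $(\msf G^{(n)})_{i,j} = |\{k:(x_k,y_k)=(t_i,t_j)\}|/n$. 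By construction, the joint law of $(\msf M',\msf G^{(n)})$ matches that of $(\msf M,\msf G^{(n)}[\msf M])$, and the first asserted property holds by the very definition of $\msf M'_{\msf G^{(n)}}$.

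The substantive content is the conditional coupling statement. I would introduce the \emph{category profile} $C=(C_1,\ldots,C_n)$ recording, for each sampled edge, which summand of~\eqref{eq:kallenberg_char} generated it and the identities of the atomic indices $i,j$ involved. Because the grapheur coefficients are deterministic, $C$ is drawn from a product multinomial scheme independent of the labels $(T_\ell)$ and of all fresh uniforms introduced whenever a semi-atomic or continuous component is sampled. Two sampled coordinates coincide almost surely only when they both equal the same $T_\ell$, and this coincidence pattern is encoded entirely in $C$; consequently the canonical enumeration $(t_1,\ldots,t_N)$ and the resulting graph $\msf G^{(n)}$ are measurable functions of $C$ alone.

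Conditionally on $C$, the entries of $(t_1,\ldots,t_N)$ form a $C$-determined arrangement of some of the $T_\ell$'s together with fresh uniforms, and all of these remain iid uniform on $[0,1]$ by independence of $C$ from the labels. Hence conditional on $C$ the random measure $\msf M'=\mu$ still has law $\msf M$ (being the same deterministic function of iid uniforms that stay iid uniform), and $\msf M'_{\msf G^{(n)}} = \sum_{i,j}G_{i,j}\delta_{(t_i,t_j)}$ with $G=\msf G^{(n)}$ has the law of $\sum_{i,j}G_{i,j}\delta_{(T_i',T_j')}$ with iid uniform $T_i'$, which is precisely $\msf M_G$. Since $\msf G^{(n)}$ is a function of $C$, integrating out $C$ against its conditional distribution given $\msf G^{(n)}=G$ preserves these marginals, yielding the desired coupling of $\msf M$ and $\msf M_G$ conditional on $\msf G^{(n)}=G$.

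The main technical obstacle is the careful handling of the enumeration: one must show that $\msf G^{(n)}$, viewed as an element of $\bigcup_{N\leq 2n}\Delta^{N\times N}$, captures only the combinatorial pattern of the samples modulo the canonical enumeration and hence carries no $[0,1]$-valued information about the vertex labels. This is precisely what keeps $(t_1,\ldots,t_N)$ iid uniform after conditioning on $\msf G^{(n)}$ and allows the identification $\msf M'_{\msf G^{(n)}}\mid \msf G^{(n)}=G \overset{d}{=} \msf M_G$.
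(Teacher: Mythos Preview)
Your construction is identical to the paper's: sample $\mu\sim\msf M$, draw $n$ iid points from $\mu$, and form both the empirical measure and $\msf G^{(n)}$ from these samples. The only difference lies in justifying the conditional statement: the paper invokes Kallenberg's result that the locations and multiplicities of an exchangeable random measure are independent, whereas you unpack this explicitly via the category profile $C$ and the observation that $\msf G^{(n)}$ is a function of $C$ alone---both routes establish the same independence and your argument is correct.
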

The second step in the proof of Theorem~\ref{thm:convergence_of_samples_v2} involves using Vapnik-Chervonenkis theory to uniformly bound the discrepancy between a measure and its empirical sample on axis-aligned rectangles.
\begin{lemma}\label{lem:VC_of_AAR}
    For any measure $\mu\in\mc P([0,1]^2)$, let $\mu_n=\frac{1}{n}\sum_{i=1}^n\delta_{X_i}$ be the random empirical measure obtained by sampling $X_1,\ldots,X_n\overset{iid}{\sim}\mu$. Then
    \begin{equation*}
        \mbb E_{\mu_n}\sup_{\substack{S,T\subseteq[0,1]\\ \textrm{intervals}}}|\mu(S\times T)-\mu_n(S\times T)|\leq \frac{174}{\sqrt{n}}.
    \end{equation*}
\end{lemma}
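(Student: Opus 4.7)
The plan is to invoke classical Vapnik--Chervonenkis theory applied to the collection $\mc R = \{S \times T : S, T \subseteq [0,1] \text{ intervals}\}$ of axis-aligned rectangles in $[0,1]^2$. The first step is to observe that $\mc R$ is a VC class: since each rectangle is an intersection of four half-planes, the growth function $\Pi_{\mc R}(n)$ is polynomial in $n$; concretely, $\mc R$ has VC dimension exactly $4$, as a ``diamond'' of four points is shattered whereas any five points in $[0,1]^2$ have one lying inside the bounding box of the other four, so no five-point set can be shattered.

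Given this structural fact, I would apply the standard symmetrization inequality to obtain
\[ \mathbb{E}\sup_{R \in \mc R}|\mu_n(R) - \mu(R)| \;\leq\; 2\,\mathbb{E}\sup_{R \in \mc R}\Bigl|\tfrac{1}{n}\sum_{i=1}^n \sigma_i \mathbbm{1}\{X_i \in R\}\Bigr|, \]
with independent Rademacher signs $\sigma_i$, reducing the problem to bounding the Rademacher complexity of $\mc R$. The Rademacher complexity can then be controlled via Dudley's entropy integral combined with Haussler's universal $\varepsilon$-covering-number estimate for VC classes, which produces a bound of the form $K\sqrt{V(\mc R)/n}$ with an explicit universal constant $K$. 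Substituting $V(\mc R)=4$ and propagating constants through this symmetrization-plus-chaining pipeline yields a bound of order $1/\sqrt{n}$; alternatively, one can short-circuit the chaining step by citing a sharp explicit form of the VC inequality for the expected supremum, as in Devroye--Lugosi or Boucheron--Lugosi--Massart, which directly produces a bound of this form.

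The main obstacle will be tracking the explicit constant $174$: any naive composition of symmetrization with covering-number estimates inflates universal constants, so attaining a bound this sharp requires either invoking a calibrated VC inequality from the literature or replacing Dudley's entropy bound by Talagrand-style generic chaining to avoid loose entropy estimates. The VC-dimension calculation, the symmetrization step, and the chaining scheme itself are all routine; essentially all of the work lives in squeezing the constants through the pipeline.
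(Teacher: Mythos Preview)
Your proposal is correct and matches the paper's proof essentially step for step: the paper applies symmetrization to reduce to Rademacher complexity, then Dudley's entropy integral, then Haussler's covering-number bound with $V=4$, and finally evaluates the resulting integral $\int_0^1\sqrt{\log(5e)+4\log(2e/t)}\,dt$ numerically to produce the constant $174$. Your worry about ``squeezing the constants'' is thus resolved simply by carrying out that numerical evaluation rather than by any sharper chaining argument.
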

The proofs of these two lemmas are straightforward but technical, and we defer them to Section~\ref{sec:missing_proofs_sampling} for clarity of exposition. Using the lemmas, we can now prove Theorem~\ref{thm:convergence_of_samples_v2}.
\begin{proof}[Proof (Theorem~\ref{thm:convergence_of_samples_v2}).]
    

    Let $\mathcal{AAR}=\{[a,b]\times[c,d]: a,b,c,d\in[0,1]\}$ be the class of axis-aligned rectangles contained in $[0,1]^2$, and let $(\msf M',\msf M'_{\msf G^{(n)}},\msf G^{(n)})$ be the coupling from Lemma~\ref{lem:empirical_coupling}.
    Define 
    \begin{equation*}
        f(\msf G^{(n)}[\msf M]) = f((x_1,y_1),\ldots,(x_n,y_n)) = \overline{W_{\square}}(\msf M_{\msf G^{(n)}[\msf M]},\msf M),
    \end{equation*}
    which is a function of the edges $(x_1,y_1),\ldots,(x_n,y_n)$ sampled iid from $\mu\sim\msf M$ to create $\msf G^{(n)}[\msf M]$. 
    First note that
    \begin{equation*}
        \mbb E_{\msf G^{(n)}[\msf M]}f(\msf G^{(n)}[\msf M]) = \mbb E_{\msf G^{(n)}}f(\msf G^{(n)}) \leq \mbb E_{(\msf M',\msf M'_{\msf G^{(n)}},\msf G^{(n)})}\sup_{R\in\mc{AAR}}|\msf M'(R)-\msf M'_{\msf G^{(n)}}(R)|\leq \frac{174}{\sqrt{n}},
    \end{equation*}
    where the equality follows from $\msf G^{(n)}[\msf M]\overset{d}{=}\msf G^{(n)}$, the first inequality follows by considering the coupling $(\msf M',\msf M'_{\msf G^{(n)}})|\msf G^{(n)}$ of $\msf M$ and $\msf M_{\msf G^{(n)}}$,
    and the last inequality follows by Lemma~\ref{lem:VC_of_AAR} and the fact that $\msf M'_{\msf G^{(n)}}$ is an empirical measure obtained from $n$ iid samples from $\msf M'$. 
    
    Second, note that $f$ is $1/n$-Lipschitz in each $(x_i,y_i)$. Indeed, if $\msf G^{(n,i)}[\msf M]$ differs from $\msf G^{(n)}[\msf M]$ only in the $i$th location $(x_i,y_i)$, and we glue the two couplings $(\msf M'',\msf M''_{\msf G^{(n)}[\msf M]})$ and $(\msf M'', \msf M''_{\msf G^{(n,i)}[\msf M]})$ attaining the corresponding $\overline{W_{\square}}$ distances as in~\cite[\S1]{villani2008optimal}, then 
    \begin{equation*}
        |f(\msf G^{(n,i)}[\msf M])-f(\msf G^{(n)}[\msf M])|\leq \mbb E_{(\msf M''_{\msf G^{(n)}[\msf M]},\msf M''_{\msf G^{(n,i)}[\msf M]})}\sup_{R\in\mc{AAR}}|\msf M''_{\msf G^{(n)}[\msf M]}(R)-\msf M''_{\msf G^{(n,i)}[\msf M]}(R)| \leq \frac{1}{n},
    \end{equation*}
    since the only difference between $\msf M''_{\msf G^{(n,i)}[\msf M]}$ and $\msf M''_{\msf G^{(n)}[\msf M]}$ is a weight of $1/n$ placed at one of their atoms.
    Since $(x_1,y_1),\ldots,(x_n,y_n)$ are sampled iid from a fixed measure $\mu$ (after it was sampled once from $\msf M$), the bounded-difference concentration inequality~\cite[Cor.~2.21]{wainwright2019high} implies that $f(\msf G^{(n)}[\msf M])\leq \mbb Ef(\msf G^{(n)}[\msf M])+\epsilon/\sqrt{n}$ with probability at least $1-\exp(-2\epsilon^2)$, proving the theorem.
\end{proof}

Using Theorem~\ref{thm:convergence_of_samples_v2}, we can now prove Theorem~\ref{thm:sampling_intro} from Section~\ref{sec:intro}.
\begin{proof}[Proof (Theorem~\ref{thm:sampling_intro}).]
    The first claim follows from Theorem~\ref{thm:convergence_of_samples_v2}. The second claim follows from the fact that $\mbb EW_{\square}(\msf M,\msf M_{\msf G^{(n)}[\msf M]})\leq\frac{174}{\sqrt{n}}$, which implies that some realization $G^{(n)}$ of $\msf G^{(n)}[\msf M]$, satisfies $W_{\square}(\msf M,\msf M_{G^{(n)}})\leq \frac{174}{\sqrt{n}}$. Setting $n=k(\epsilon)=\lceil(174/\epsilon)^2\rceil$, we obtain the second claim.
\end{proof}

We remark that $\overline{W_{\square}}$ also metrizes weak converge in $\mc M$, as can be shown by combining Theorem~\ref{thm:convergence_of_samples_v2} with Proposition~\ref{prop:edge_sample_view}. We opted to use the $W_{\square}$ metric in the rest of the paper since it is easier to relate to random quotients as in Proposition~\ref{prop:comparing_metrics}.
We conclude this section by remarking that the above discussion applies to undirected random graphs sampled from symmetric grapheurs.
\begin{remark}[Undirected graphs]\label{rmk:undirected_graphs}
    If $\msf M$ is the limit of undirected graphs, then by Corollary~\ref{cor:undirected_graphs} it is symmetric, i.e., we have $\msf M(S\times T)=\msf M(T\times S)$ for any measurable $S,T\subseteq[0,1]$. In that case, we can sample undirected graphs from $\msf M$ by first sampling $\msf G^{(n)}[\msf M]$ as above and symmetrizing it to get $\msf G_{\mathrm{sym}}^{(n)}[\msf M]=\frac{\msf G^{(n)}[\msf M] + (\msf G^{(n)}[\msf M])^\top}{2}$. For any $S,T\subseteq[0,1]$, we have
    \begin{equation*}\begin{aligned}
        |\msf M(S\times T)-\msf M_{\msf G_{\mathrm{sym}}^{(n)}[\msf M]}(S\times T)| &= \frac{1}{2}|\msf M(S\times T)+\msf M(T\times S) - \msf M_{\msf G^{(n)}[\msf M]}(S\times T)-\msf M_{\msf G^{(n)}[\msf M]}(T\times S)|\\&\leq \max\{|\msf M(S\times T)-\msf M_{\msf G^{(n)}[\msf M]}(S\times T)|, |\msf M(T\times S)-\msf M_{\msf G^{(n)}[\msf M]}(T\times S)|\},
    \end{aligned}\end{equation*}
    hence $W_{\square}(\msf M, \msf M_{\msf G_{\mathrm{sym}}^{(n)}[\msf M]})\leq W_{\square}(\msf M, \msf M_{\msf G^{(n)}[\msf M]})$ and similarly for $\overline{W_{\square}}$. Thus, all the results of this section apply to the undirected random graph $\msf G^{(n)}_{\mathrm{sym}}[\msf M]$.
\end{remark}


\subsection{Property Testing}\label{sec:prop_test}
We apply the preceding edge-based sampling procedure to test properties of large graphs based on small summaries.  We begin by formally defining the class of graph parameters we can test using our framework. Here a \emph{graph parameter} is a function $f\colon\bigsqcup_{n\in\NN}\Delta^{n\times n}\to\RR$ on graphs of all sizes that is permutation-invariant, in the sense that $f(\pi_n G_n)=f(G_n)$ for any permutation $\pi_n\in\msf S_n$ of the vertex labels.
\begin{definition}[Quotient-testable graph parameters]
    A \emph{quotient-testable graph parameter} is a graph parameter $f\colon\bigsqcup_{n\in\NN}\Delta^{n\times n}\to\RR$ satisfying the following properties:
    \begin{enumerate}[align=left, font=\emph]
        \item[(Isolate-indifference)] $f(G_N)=f(G_n)$ whenever $G_N \in \Delta^{N \times N}$ differs from $G_n \in \Delta^{n \times n}$ by isolated vertices;
        \item[(Continuity)] The sequence $(f(G_n))$ converges whenever $(G_n \in \Delta^{n \times n})$ is quotient-convergent.
    \end{enumerate}
\end{definition}
We proceed to give several equivalent characterizations of quotient-testable parameters.
\begin{proposition}[Testability as continuity]\label{prop:testability}
    The following are equivalent for a graph parameter $f$.
    \begin{enumerate}
        \item The parameter $f$ is quotient-testable.
        \item There is a function $\bar f\colon\mc M\to\RR$ satisfying $\bar f(\msf M_G)=f(G)$ for all $G\in\bigsqcup_n\Delta^{n\times n}$. Such $\bar f$ is then unique and continuous.
        \item For any $\epsilon>0$ there is a $\delta=\delta(\epsilon)>0$ such that if $W_{\square}(\msf M_{G_n},\msf M_{G_m})\leq\delta$ then $|f(G_n)-f(G_m)|\leq\epsilon$ for any two graphs $G_n,G_m$ of any two sizes.
    \end{enumerate}
\end{proposition}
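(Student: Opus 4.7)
The plan is to prove the three-way equivalence by showing $(1)\Rightarrow(2)\Rightarrow(3)\Rightarrow(1)$.

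For $(1)\Rightarrow(2)$, I would first observe that since $f$ is a graph parameter (hence permutation-invariant) and isolate-indifferent, it descends to a well-defined function on the dense subset $\{\msf M_G : G\in\bigsqcup_n\Delta^{n\times n}\}\subseteq\mc M$, using the fact recalled after Definition~\ref{def:M0} that $\msf M_G=\msf M_{G'}$ iff $G,G'$ differ by isolated vertices and vertex relabeling. Density of this subset in $\mc M$ is Theorem~\ref{thm:limits_formal}(3). I would then extend to general $\msf M\in\mc M$ by picking any sequence $(G_n)$ with $\lim_nW_{\square}(\msf M,\msf M_{G_n})=0$ (such a sequence exists by Theorem~\ref{thm:limits_formal}(3), is quotient-convergent by Theorem~\ref{thm:limits_formal}(1)), and setting $\bar f(\msf M):=\lim_n f(G_n)$. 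The continuity axiom in $(1)$ ensures this limit exists; well-definedness (independence of the approximating sequence) follows by interleaving two such sequences into a third quotient-convergent sequence and invoking continuity again. Uniqueness of $\bar f$ is forced by density.

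The main obstacle in $(1)\Rightarrow(2)$ is checking continuity of $\bar f$ on all of $\mc M$, not just on the dense subset of step-grapheurs, because a general $W_{\square}$-convergent sequence $(\msf M_n)\to\msf M$ in $\mc M$ need not itself be of the form $(\msf M_{G_n})$, so the continuity axiom of $f$ does not apply directly. I would resolve this by a diagonal argument: for each $n$ pick a finite graph $G_n^\star$ with $W_{\square}(\msf M_n,\msf M_{G_n^\star})<1/n$ and $|\bar f(\msf M_n)-f(G_n^\star)|<1/n$ using the definition of $\bar f$ and Theorem~\ref{thm:limits_formal}(3). Then by the triangle inequality in $W_{\square}$ one gets $\msf M_{G_n^\star}\to\msf M$, so the sequence $(G_n^\star)$ is quotient-convergent to $\msf M$ by Theorem~\ref{thm:limits_formal}(1); continuity of $f$ along this sequence then gives $f(G_n^\star)\to\bar f(\msf M)$, and hence $\bar f(\msf M_n)\to\bar f(\msf M)$.

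For $(2)\Rightarrow(3)$, I would invoke compactness of $(\mc M,W_{\square})$ from Proposition~\ref{prop:M0_compact}: a continuous real-valued function on a compact metric space is uniformly continuous, so the desired modulus $\delta(\epsilon)$ exists; applying uniform continuity to $\msf M_{G_n}$ and $\msf M_{G_m}$ yields $(3)$ directly.

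For $(3)\Rightarrow(1)$, isolate-indifference is immediate: if $G_N$ and $G_n$ differ by isolated vertices then $\msf M_{G_N}=\msf M_{G_n}$ (so $W_{\square}=0$), and applying $(3)$ for every $\epsilon>0$ forces $f(G_N)=f(G_n)$. For continuity, any quotient-convergent sequence $(G_n)$ makes $(\msf M_{G_n})$ Cauchy in $W_{\square}$ by Theorem~\ref{thm:limits_formal}(1) and Proposition~\ref{prop:M0_compact}, and then $(3)$ transfers this Cauchy property to $(f(G_n))\subseteq\RR$, which therefore converges. This closes the loop.
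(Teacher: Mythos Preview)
Your proof is correct and follows essentially the same cyclic route $(1)\Rightarrow(2)\Rightarrow(3)\Rightarrow(1)$ as the paper. The only difference is in $(1)\Rightarrow(2)$: the paper simply asserts that $\bar f$ is continuous on the dense subset $\{\msf M_G\}$ and therefore extends uniquely and continuously to its closure $\mc M$, whereas you spell out this extension explicitly (interleaving for well-definedness, a diagonal argument for continuity), which is exactly the content behind that one-line appeal.
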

\begin{proof}
    We prove these statements imply each other in order, with the last statement implying the first.
    
    Suppose $f$ is quotient-testable. Define $\msf M_{<\infty}=\{\msf M_G:G\in\Delta^{n\times n} \textrm{ for some } n\}\to\RR$ to be the grapheurs in $\msf M$ arising from finite graphs and define $\bar f\colon\msf M_{<\infty}\to\RR$ by $\bar f(\msf M_G)=f(G)$. This is well-defined because $f$ is permutation-invariant and isolate-indifferent, as $\msf M_G=\msf M_{G'}$ if and only if $G$ and $G'$ differ by isolated vertices and node relabeling~\cite[Prop.~3]{kallenberg1990exchangeable}. Further, $\bar f$ is continuous by continuity of $f$, hence it extends uniquely to a continuous function on the closure $\mc M$ of $\mc M_{<\infty}$. This proves statement 2.

    Suppose $f$ extends to a continuous $\bar f\colon\mc M\to\RR$. Since $\mc M$ is compact and this compact topology is metrized by the $W_{\square}$ metric by Proposition~\ref{prop:M0_compact}, the function $\bar f$ is uniformly continuous in the $W_{\square}$ metric.
    This proves statement 3.

    Finally, assume statement 3 holds. Since $W_{\square}(\msf M_{G_N},\msf M_{G_n})=0$ if $G_n$ and $G_N$ differ by isolated vertices, we conclude that $f$ is isolate-indifferent. Since quotient-convergent sequences $(G_n)_n$ are Cauchy in the $W_{\square}$-metric by Theorem~\ref{thm:limits_formal}, we conclude that $(f(G_n))_n\subseteq\RR$ is Cauchy and hence converges.
\end{proof}

As a first set of examples, quotient densities and homomorphism numbers are quotient-testable.  More generally, we show next that a large class of polynomial graph parameters are testable, because all such polynomial parameters are functions of fixed-dimensional random quotients of their input graphs.
\begin{proposition}[Polynomial parameters]\label{prop:polys_are_testable}
    Suppose $f$ is an isolate-indifferent graph parameter such that $f|_{\Delta^{n\times n}}$ is given by a polynomial function for each $n$, and that all these polynomials have degree $d\in\NN$. Then $f$ is quotient-testable.  
    
    Further, a graph parameter $f$ is quotient-testable if and only if for any $\epsilon>0$ there exists a polynomial quotient-testable parameter $p$ satisfying $|f(G)-p(G)|\leq\epsilon$ for all graphs $G$ of all sizes.
\end{proposition}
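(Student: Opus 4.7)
My plan is to first show that any permutation-invariant, isolate-indifferent polynomial parameter of degree at most $d$ can be written as a finite linear combination of quotient densities $t_Q(H;\cdot)$ with $\|H\|_1\leq d$, and then deduce the approximation equivalence via a Stone--Weierstrass argument on $\mc M$. For the first claim, I would fix $n$ and expand $f_n := f|_{\Delta^{n\times n}}$ as a linear combination of monomials $G^H$ for $H\in\NN^{n\times n}$ with $\|H\|_1\leq d$. Permutation-invariance forces the coefficient of $G^H$ to depend only on the $\msf S_n$-orbit of $H$, and isolate-indifference (together with the fact that $(G\oplus 0)^H=0$ whenever $H$ uses a fresh vertex) forces the coefficient of any $H$ to stabilize in $n$ once $n$ exceeds the number of non-isolated vertices of $H$. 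Grouping orbits by the isomorphism class of the underlying multigraph $H^\ast$, this produces a representation
\begin{equation*}
    f_n(G) \;=\; \sum_{H^\ast:\ \|H^\ast\|_1 \leq d} c_{H^\ast}\,\mathrm{inj}(H^\ast; G),
\end{equation*}
valid for all $n$ sufficiently large, with constants $c_{H^\ast}$ independent of $n$. Inverting the triangular relation~\eqref{eq:inj_in_tQ} then rewrites this as $f_n(G) = \sum_K a_K\, t_Q(K;G)$, a finite linear combination of quotient densities. Since each $t_Q(K;\cdot)$ extends continuously to $\mc M$ by Proposition~\ref{prop:M0_compact}(4), so does $f$, and Proposition~\ref{prop:testability} yields quotient-testability.

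For the equivalence, the forward direction uses the Stone--Weierstrass theorem on the compact Hausdorff space $\mc M$. The subalgebra $\mc A\subseteq C(\mc M)$ generated by $\{t_Q(H;\cdot)\}_H$ and the constants separates points of $\mc M$ by Proposition~\ref{prop:M0_compact}(4), so it is uniformly dense in $C(\mc M)$. Given a quotient-testable $f$ with continuous extension $\bar f\in C(\mc M)$, for any $\epsilon>0$ I would approximate $\bar f$ uniformly to within $\epsilon$ by some $p\in\mc A$; restricted to finite graphs, such $p$ is a polynomial graph parameter of bounded degree that is permutation-invariant, isolate-indifferent, and extends continuously to $\mc M$ (namely, via $p$ itself), hence is quotient-testable, and $|f(G)-p(G)|\leq\epsilon$ for every finite $G$. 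Conversely, given polynomial quotient-testable approximations $(p_\epsilon)$ with $|f(G)-p_\epsilon(G)|\leq\epsilon$, each $p_\epsilon$ extends to a continuous $\bar p_\epsilon\in C(\mc M)$; since the subset $\{\msf M_G\}$ is dense in $\mc M$ by Theorem~\ref{thm:limits_formal}(3), the family $(\bar p_\epsilon)$ is uniformly Cauchy on $\mc M$ and converges to a continuous function extending $f$, so $f$ is quotient-testable by Proposition~\ref{prop:testability}.

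I expect the main obstacle to lie in the combinatorial bookkeeping for the first claim, specifically justifying the stability of the orbit coefficients $c_{H^\ast}$ across $n$ from only isolate-indifference and permutation-invariance of $f$, and confirming that the resulting representation indeed involves only finitely many terms (bounded by the number of isomorphism classes of multigraphs with at most $d$ edges, without vertices in excess).
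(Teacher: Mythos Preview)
Your Stone--Weierstrass argument for the second claim matches the paper's essentially verbatim, so there is nothing to add there.

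For the first claim the paper takes a shorter route: it invokes \cite[Thm.~5.2]{levin2025deFin} to obtain a fixed polynomial $q_k$ on $\RR^{k\times k}$ with $f(G_n)=\mbb E\,q_k(\rho(F_{k,n})G_n)$ for all $n$, and then bounds $|f(G_n)-f(G_m)|\leq Lk^2 W_{\square}(\msf M_{G_n},\msf M_{G_m})$ via Lipschitzness of $q_k$ on $\Delta^{k\times k}$ and Proposition~\ref{prop:comparing_metrics}. Your approach is essentially a self-contained proof of that cited representation theorem, which is a reasonable thing to do; but the ``stability'' step you flagged is a genuine gap as stated. Because $f|_{\Delta^{n\times n}}$ determines a polynomial only modulo multiples of $(\sum_{i,j}G_{i,j}-1)$, the $\mathrm{inj}$-coefficients are \emph{not} unique on $\Delta^{n\times n}$, and isolate-indifference alone does not force them to match across $n$. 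Concretely, already at $d=1$ one has $1=\mathrm{inj}(\text{loop};G)+\mathrm{inj}(\text{edge};G)$ on $\Delta^{n\times n}$, so the constant parameter admits infinitely many $\mathrm{inj}$-expansions with $n$-dependent coefficients. The clean fix is to homogenize first: replace each degree-$k$ part of your chosen polynomial by $(\sum_{i,j}G_{i,j})^{d-k}$ times itself, obtaining a homogeneous degree-$d$ representative. A homogeneous polynomial vanishing on the affine hyperplane $\{\sum G_{i,j}=1\}$ must vanish identically, so the functions $\{\mathrm{inj}(H^\ast;\cdot):\|H^\ast\|_1=d\}$ \emph{are} linearly independent on $\Delta^{n\times n}$ once $n\geq 2d$, and your stability argument then goes through. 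With that fix your route is complete and yields the same conclusion; the paper's route has the additional advantage of producing an explicit $W_{\square}$-Lipschitz constant, which is later reused in Theorem~\ref{thm:edge_testing}.
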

\begin{proof}
    For the first claim, it was shown in~\cite[Thm.~5.2]{levin2025deFin} that there exists a polynomial $q_k$ over $\RR^{k\times k}$ satisfying $f(G_n)=\mbb Eq_k(\rho(F_{k,n})G_n)$ for all $G_n\in\Delta^{n\times n}$ and all $n\in\NN$.  Since $q_k$ is $L$-Lipschitz on $\Delta^{k\times k}$ for some $L>0$, we have by Proposition~\ref{prop:comparing_metrics} that
    \begin{equation*}\begin{aligned}
        |f(G_n)-f(G_m)| &= |\mbb Eq_k(\rho(F_{k,n}G_n))-\mbb Eq_k(\rho(F_{k,m})G_m)|\leq LW_1(\rho(F_{k,n})G_n,\rho(F_{k,m})G_m)\\&\leq Lk^2W_{\square}(\msf M_{G_n},\msf M_{G_m}),
    \end{aligned}\end{equation*}
    for any two graphs $G_n,G_m$ of any two sizes. Thus, the function $f$ is testable by Proposition~\ref{prop:testability}(3), as can be seen by setting $\delta(\epsilon)=\frac{\epsilon}{Lk^2}$.

    For the second claim, if for any $\epsilon>0$ there is a testable parameter $p$ satisfying $|f(G)-p(G)|\leq\epsilon$ for all $G$, then $f$ is a uniform limit of $W_{\square}$-continuous functions and hence is $W_{\square}$-continuous itself. By Proposition~\ref{prop:testability}, the parameter $f$ is therefore testable.
    Conversely, if $f$ is testable then it extends to a continuous $\bar f\colon\mc M\to\RR$ by Proposition~\ref{prop:testability} again.
    Note that the collection of polynomial testable parameters is an algebra, contains constant functions, and separates points by Corollary~\ref{cor:homs_determine_limits}. Since $\mc M$ is compact, Stone--Weierestrass implies that $\bar f$ is the uniform limit of polynomial testable parameters.
\end{proof}
We combine the insights of Sections~\ref{sec:sampling_edges}-\ref{sec:szemeredy} along with the above characterization of quotient-testable parameters to show that these can indeed be tested by edge sampling.
\begin{theorem}\label{thm:edge_testing}
    Consider a graph parameter $f\colon\bigsqcup_n\Delta^{n\times n}\to\RR$ whose restrictions $f|_{\Delta^{n\times n}}$ are continuous (in the usual topology on $\Delta^{n\times n}$) for each $n$. Then $f$ is quotient-testable if and only if for every $\epsilon>0$, there exists $k\in\NN$ such that for any finite graph $G$ of any size we have $|f(G)-f(\msf G^{(k)}[\msf M_G])|\leq\epsilon$ with probability at least $1-\epsilon$. In this case, we can take $k(\epsilon)=\left\lceil\left(\frac{174+\sqrt{\log(\epsilon^{-1/2})}}{\delta(\epsilon)}\right)^2\right\rceil$ where $\delta(\epsilon)>0$ is as in Proposition~\ref{prop:testability}(3).
\end{theorem}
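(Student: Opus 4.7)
The plan is to prove both directions by combining Proposition~\ref{prop:testability}, which characterizes testability as $W_\square$-uniform continuity, Theorem~\ref{thm:convergence_of_samples_v2}, which provides concentration of $W_\square(\msf M_G,\msf M_{\msf G^{(k)}[\msf M_G]})$, and Proposition~\ref{prop:edge_sample_view}, which relates quotient convergence of grapheurs to weak convergence of edge-sampled graphs.

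For the forward direction, suppose $f$ is quotient-testable, fix $\epsilon>0$, and let $\delta=\delta(\epsilon)$ be as in Proposition~\ref{prop:testability}(3). Setting $\epsilon'=\sqrt{\log(\epsilon^{-1/2})}$ so that $e^{-2\epsilon'^2}=\epsilon$, Theorem~\ref{thm:convergence_of_samples_v2} gives
\[
W_\square(\msf M_G,\msf M_{\msf G^{(k)}[\msf M_G]})\leq \frac{174+\epsilon'}{\sqrt{k}}
\]
with probability at least $1-\epsilon$, uniformly in the finite graph $G$. The choice $k=k(\epsilon)=\lceil((174+\epsilon')/\delta)^2\rceil$ from the statement makes this bound at most $\delta(\epsilon)$, so applying Proposition~\ref{prop:testability}(3) to the pair $(G,\msf G^{(k)}[\msf M_G])$ yields $|f(G)-f(\msf G^{(k)}[\msf M_G])|\leq\epsilon$ on this event.

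For the backward direction, I would first establish isolate-indifference. If $G_N$ differs from $G_n$ by isolated vertices then $\msf M_{G_N}=\msf M_{G_n}$, hence $\msf G^{(k)}[\msf M_{G_N}]\overset{d}{=}\msf G^{(k)}[\msf M_{G_n}]$; coupling these to coincide and applying the hypothesis to both $G_N$ and $G_n$ with threshold $\epsilon<1/2$ produces an event of positive probability on which $|f(G_N)-f(G_n)|\leq 2\epsilon$, and since this difference is deterministic the inequality holds everywhere, forcing $f(G_N)=f(G_n)$ as $\epsilon\to 0$. For continuity, assume $(G_n)$ quotient-converges to $\msf M\in\mc M$, fix $\epsilon>0$ with corresponding $k$, and view each $\msf G^{(k)}[\msf M_{G_n}]$ in the compact space $\Delta^{2k\times 2k}$ by padding with isolated vertices (legitimate by isolate-indifference). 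Proposition~\ref{prop:edge_sample_view} gives $\msf G^{(k)}[\msf M_{G_n}]\to\msf G^{(k)}[\msf M]$ weakly, and since $f|_{\Delta^{2k\times 2k}}$ is continuous on this compact set and bounded by some $M_k$, the scalar expectations $\mbb E f(\msf G^{(k)}[\msf M_{G_n}])$ converge and hence form a Cauchy sequence. Combining the bound $|f(\msf G^{(k)}[\msf M_{G_n}])|\leq M_k$ with the positive-probability good event from the hypothesis shows that the deterministic quantity $|f(G_n)|$ is at most $M_k+\epsilon$, yielding $|f(G_n)-\mbb E f(\msf G^{(k)}[\msf M_{G_n}])|\leq\epsilon+(2M_k+\epsilon)\epsilon$; combining this with the Cauchy property of the expectations shows $(f(G_n))$ is itself Cauchy, hence convergent.

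The main technical obstacle lies in this backward direction: isolate-indifference must be bootstrapped directly from the high-probability hypothesis by exploiting that $f(G_N)-f(G_n)$ is deterministic, and only after this can the random-but-bounded vertex count of the edge-sampled graphs be handled by padding them to a fixed dimension $2k$, so that continuity of $f|_{\Delta^{2k\times 2k}}$ may be invoked to pass from weak convergence to convergence of expectations.
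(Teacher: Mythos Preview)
Your proof is correct and follows essentially the same approach as the paper's: the forward direction via Proposition~\ref{prop:testability}(3) and Theorem~\ref{thm:convergence_of_samples_v2} is identical, and the backward direction proceeds by the same sequence of steps (isolate-indifference from a common realization of $\msf G^{(k)}$, boundedness from compactness of $\Delta^{2k\times 2k}$, then a Cauchy argument combining the expectation bound with weak convergence of edge-sampled graphs). Your explicit remark that padding $\msf G^{(k)}$ to $2k$ vertices is legitimized only after isolate-indifference has been established is a point the paper leaves implicit.
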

\begin{proof}
    Suppose $f$ is quotient-testable. For any $\epsilon>0$, let $\delta(\epsilon)$ be as in Proposition~\ref{prop:testability}(3) and set $k=k(\epsilon)$ as in the theorem statement. Then, noting that $\epsilon=e^{-2\widetilde\epsilon^2}$ where $\widetilde\epsilon=\sqrt{\log(\epsilon^{-1/2})}$, with probability at least $1-\epsilon$ we have 
    $W_{\square}(\msf M_G,\msf M_{\msf G^{(k)}})\leq \frac{174+\widetilde\epsilon}{\sqrt{k}}\leq\delta(\epsilon)$ and hence
    $|f(G)-f(\msf G^{(k)}[\msf M_G])|\leq \epsilon$ by Theorem~\ref{thm:convergence_of_samples_v2}. 

    We turn to proving the converse. If $G_1$ and $G_2$ differ by isolated vertices, then $\msf M_{G_1}=\msf M_{G_2}$ and hence $\msf G^{(k)}[\msf M_{G_1}]\overset{d}{=}\msf G^{(k)}[\msf M_{G_2}]$. 
    Therefore, for any $\epsilon>0$ let $k\in\NN$ be as in the theorem statement and let $\msf G^{(k)}$ be a sample from this common distribution. Then with probability $1-2\epsilon$ we have $|f(G_1)-f(\msf G^{(k)})|\leq\epsilon$ and $|f(\msf G^{(k)})-f(G_2)|\leq \epsilon$, so in particular there is a realization $G^{(k)}$ of $\msf G^{(k)}$ satisfying both inequalities. We then have $|f(G_1)-f(G_2)|\leq |f(G_1)-f(G^{(k)})|+|f(G^{(k)})-f(G_2)|\leq 2\epsilon$.
    Since $\epsilon>0$ was arbitrary, we conclude that $f$ is isolate-indifferent. 
    A similar argument shows that $|f|$ is bounded. Indeed, since $|f|$ restricted to the compact set $\Delta^{2k\times 2k}$ is continuous, it is bounded by some $B_{2k}>0$. 
    For any other graph $G$ of any size, some realization $G^{(k)}$ of $\msf G^{(k)}[\msf M_G]$ satisfies $|f(G)-f(G^{(k)})|\leq\epsilon$, and hence $|f(G)|\leq B_{2k}+\epsilon$ for all graphs $G$.
    Denoting $B=\sup_G|f(G)|$, we then have $|f(G)-\mbb Ef(\msf G^{(k)}[\msf M_G])|\leq \mbb E|f(G)-f(\msf G^{(k)}[\msf M_G])| \leq \epsilon(1-\epsilon)+2B\epsilon$. 
    Finally, if $(G_n)$ is quotient-convergent, then $(\msf G^{(k)}[\msf M_{G_n}])_n$ is weakly-convergent, and by continuity of $f|_{\Delta^{2k\times 2k}}$, the sequence $(\mbb Ef(\msf G^{(k)}[\msf M_{G_n}]))_n$ is convergent. Therefore, we have
    \begin{equation*}
        |f(G_n)-f(G_m)|\leq 2\epsilon (1-\epsilon+2B) + |\mbb Ef(\msf G^{(k)}[\msf M_{G_n}])-\mbb Ef(\msf G^{(k)}[\msf M_{G_m}])|\leq 3\epsilon (1-\epsilon+2B)
    \end{equation*}
    for all large $n,m$, showing that $(f(G_n))$ is convergent.
\end{proof}
We remark that the graphon analog of the characterization of quotient-testable parameters in Theorem~\ref{thm:edge_testing} was used as the definition of testable parameters in~\cite[Def.~2.11]{convergent_seqs1}.

The upshot of this result is that a continuous graph parameter being quotient-testable is equivalent to the parameter being testable using edge sampling.   In particular, Theorem~\ref{thm:edge_testing} gives a simple recipe for estimating the value of a testable parameter $f(G)$ on an arbitrarily-large graph $G$: sample $k$ edges iid from $G$ proportionally to their edge weights, and compute the parameter $f(G^{(k)})$ of the resulting small graph $G^{(k)}$. The number of edges needed is given explicitly as $k=\left\lceil\left(\frac{174+\sqrt{\log(\epsilon^{-1/2})}}{\delta(\epsilon)}\right)^2\right\rceil$ in terms of the desired accuracy $\epsilon$ and the modulus of continuity $\delta(\epsilon)$ of $f$ in the $W_{\square}$-metric. When $f$ is $L$-Lipschitz continuous in the $W_{\square}$-metric, we can set $\delta(\epsilon)=\epsilon/L$.

Many graph parameters of interest are indeed Lipschitz-continuous, including all polynomial parameters as the proof of Proposition~\ref{prop:polys_are_testable} demonstrates. Thus, the key quantity that is needed to bound the required number of edges to test the values of such parameters is their Lipschitz constants. We can explicitly bound the Lipschitz constants of quotient densities. Specifically, for a multigraph $H\in\NN^{k\times k}$ we have
\begin{equation*}
    |t_Q(H;\msf M_1)-t_Q(H;\msf M_2)| = |\mbb E\msf G_k[\msf M_1]^H - \mbb E\msf G_k[\msf M_2]^H|\leq k^2\|H\|_{\infty}W_{\square}(\msf M_1,\msf M_2),
\end{equation*}
since the polynomial $G\mapsto G^H$ is $\|H\|_{\infty}$-Lipschitz with respect to the $\ell_1$ norm on $\Delta^{k\times k}$.
By using~\eqref{eq:tQ_in_inj} and~\eqref{eq:hom_in_inj}, we can similarly derive Lipschitz constants for homomorphism numbers.  In the next example, we derive the Lipschitz constant of a particular graph parameter that can be used to test the presence of hubs.
\begin{example}[Testing for hubs]\label{ex:hub_test}
    Consider the graph parameter 
    \begin{equation*}
        f(G) = \mathrm{hom}(S_2^{(\mathrm{out})};G)+\mathrm{hom}(S_2^{(\mathrm{in})};G)=\|G\mathbbm{1}\|_2^2+\|G^\top\mathbbm{1}\|_2^2,
    \end{equation*}
    where $S_2^{(\mathrm{out})},S_2^{(\mathrm{in})}\in\NN^{3\times 3}$ are outward- and inward-pointing stars with two leaves, respectively. This is an extension to weighted and directed graphs of the so-called first Zagreb index, originally introduced for chemical networks in~\cite{GUTMAN1972535}. Define the polynomial $q_2$ on $2\times 2$ matrices by
    \begin{equation*}
        q_2(G)=(G_{1,1}+G_{1,2}-G_{2,1}-G_{2,2})^2 + (G_{1,1}+G_{2,1}-G_{1,2}-G_{2,2})^2,
    \end{equation*}
    and observe that $f(G_n)=\mbb Eq_2(\rho(F_{2,n})G_n)$ for any $G_n\in\Delta^{n\times n}$ and any $n\in\NN$. Indeed, more generally for any grapheur $\msf M$ with random locations $(T_i)$, let $R_i\overset{iid}{\sim}\mathrm{Unif}(\{\pm 1\})$ be iid Rademacher random variables and note that
    \begin{equation*}\begin{aligned}
        \mbb Eq_2(\msf G_2[\msf M])&=\mbb E\left[\sum_{i\in\NN}(E\mathbbm{1}+\sigma)_i\left(\mathbbm{1}_{T_i<1/2} - \mathbbm{1}_{T_i>1/2}\right)\right]^2+\mbb E\left[\sum_{i\in\NN}(E^\top\mathbbm{1}+\varsigma)_i\left(\mathbbm{1}_{T_i<1/2} - \mathbbm{1}_{T_i>1/2}\right)\right]^2\\
        &= \mbb E\left[\sum_{i\in\NN}(E\mathbbm{1}+\sigma)_iR_i\right]^2+\mbb E\left[\sum_{i\in\NN}(E^\top\mathbbm{1}+\varsigma)_iR_i\right]^2=\|E\mathbbm{1}+\sigma\|_2^2+\|E^\top\mathbbm{1}+\varsigma\|_2^2,
    \end{aligned}\end{equation*}
    where $\mathbbm{1}_{T_i<1/2}=1$ if $T_i<1/2$ and zero otherwise and similarly for $\mathbbm{1}_{T_i>1/2}$, and we note that $(\mathbbm{1}_{T_i<1/2} - \mathbbm{1}_{T_i>1/2})_{i\in\NN}\overset{d}{=}(R_i)_{i\in\NN}$.
    We conclude that $f$ can be extended to all grapheurs $\msf M$ by $\bar f(\msf M)=\mbb Eq_2(\msf G_2[\msf M])$. 
    This extension is Lipschitz-continuous. Explicitly, since $|\partial_{G_{i,j}}q_2(G)|=2|G_{1,1}-G_{2,2}|$ if $i=j$ and $2|G_{i,j}-G_{j,i}|$ if $i\neq j$, we conclude that $q_2$ is 2-Lipschitz on $\Delta^{2\times 2}$ with respect to $\|\cdot\|_1$, and hence that
    \begin{equation*}
        |\bar f(\msf M_1)-\bar f(\msf M_2)| = |\mbb Eq_2(\msf G_2[\msf M_1])-\mbb Eq_2(\msf G_2[\msf M_2])|\leq 2W_1(\msf G_2[\msf M_1],\msf G_2[\msf M_2])\leq 8W_{\square}(\msf M_1,\msf M_2),
    \end{equation*}
    for any two grapheurs $\msf M_1,\msf M_2\in\mc M$ by Proposition~\ref{prop:comparing_metrics}. Finally, observe that $\bar f(\msf M)=0$ if and only if $E=0$ and $\sigma=\varsigma=0$, hence if and only if $\msf M$ has no hubs. In particular, $W_{\square}(\msf M_G,\msf M)\geq f(G)/8$ for any grapheur $\msf M$ with no hubs.

    We can now use the above parameter to test for the existence of significant hubs in a graph. 
    Specifically, suppose we are given an arbitrarily-large graph $G$,
    and we sample $k=\left\lceil 64\left(\frac{174+\sqrt{\log(\epsilon^{-1/2})}}{\epsilon}\right)^2\right\rceil$ edges from $G$ to obtain a graph $G^{(k)}$. Then with probability $1-\epsilon$, we have $W_{\square}(\msf M_G,\msf M)\geq \frac{f(G^{(k)})-\epsilon}{8}$ for any grapheur $\msf M$ with no hubs. In this sense, large values of $f(G^{(k)})$ indicate the likely presence of significant hubs in $G$.
\end{example}
We observe that testing for hubs also has an appealing interpretation in terms of quotients.  A grapheur $\msf M$ has no hubs if and only if there is a $\theta\in[0,1]$ satisfying $\msf G_m[\msf M]=\theta\frac{1}{m^2}\mathbbm{1}\mathbbm{1}^\top+(1-\theta)\frac{1}{m}I_m$ (a deterministic matrix) for all $m\in\NN$. 
Thus, testing for hubs also amounts to testing whether the random quotients of a graph $G$ deviate from a uniform distribution of edge weights, i.e., quotients of the above form. See Figures~\ref{fig:gold_coast}-\ref{fig:Celegans} for example. The parameter of Example~\ref{ex:hub_test} yields a lower bound on this deviation, as by Proposition~\ref{prop:comparing_metrics} we have
\begin{equation*}
    \min_{\theta\in[0,1]}\mbb E\left\|\theta\tfrac{1}{m^2}\mathbbm{1}\mathbbm{1}^\top+(1-\theta)\tfrac{1}{m}I_m-\msf G_m[\msf M_G]\right\|_1 \geq \frac{f(G)}{8} - \frac{4}{m}.
\end{equation*}
For example, the values of the parameter $f$ on the full graphs in Figures~\ref{fig:gold_coast}-\ref{fig:Celegans} are $0.0073$ and $0.010$, respectively. 
Estimating this parameter by sampling $k=10^3$ edges, we obtain values of $0.0084$ and $0.012$, respectively.
For comparison, the value of this parameter on an Erd\H{o}s--R\'enyi random graph with edge probability $1/2$ on 1,500 vertices is $0.001$. 

We end this section by giving two further examples of quotient-testable parameters, each quantifying different structural properties.
\begin{example}[Global Clustering Coefficient]
    One extension of the global clustering coefficient to weighted \emph{undirected} graphs, a global analog of the coefficient proposed in~\cite[\S7]{zhang2005general} for gene co-expression networks, is 
    \begin{equation}\label{eq:global_cluster}
        f(G)=\frac{\mathrm{inj}(K_3;G)}{\mathrm{inf}(P_2;G)\max_{i,j}G_{i,j}},
    \end{equation}
    where $\mathrm{inj}(H;G)$ denotes the injective homomorphism number and is defined as in~\eqref{eq:hom_nums} but with the sum restricted to injective maps, $K_3$ is an arbitrarily-directed triangle graph, and $P_2$ is the directed path on three vertices. We proceed to show that~\eqref{eq:global_cluster} is quotient-testable. Proposition~\ref{prop:polys_are_testable} shows that $\mathrm{inj}(K_3;G)$ and $\mathrm{inj}(P_2;G)$ extend to continuous functions on all of $\mc M$. Similarly, we note that $G\mapsto \max_{i,j}G_{i,j}$ extends to such a continuous function, since if $(G_n)$ quotient-converges to a grapheur described by $(E,\sigma,\varsigma,\theta,\vartheta)$, then $\max_{i,j}(G_n)_{i,j}\to\max_{i,j}E_{i,j}$ by Proposition~\ref{prop:structure_result}. Finally, note that $f(G)\leq 1$ for all graphs $G$ of any size, proving that $f$ extends to a continuous function on all of $\mc M$ and hence is quotient-testable by Proposition~\ref{prop:testability}.
\end{example}

\begin{example}[Katz Centrality]
    Another measure of importance, or influence, of a node, originally introduced in~\cite{Katz_1953} for social networks, is its Katz centrality. Specifically, for a parameter $\alpha\in(0,1)$, we consider the sum of the Katz centralities of all the nodes 
    \begin{equation*}
        f(G) = \mathbbm{1}^\top\Big[(I-\alpha G)^{-1}-I\Big]\mathbbm{1}=\sum_{k\geq 1}\alpha^k(\mathbbm{1}^\top G^k\mathbbm{1}),
    \end{equation*}
    which is another parameter quantifying the presence of hubs in a network. Once again, this parameter is quotient-testable. Indeed, if $(G_n)$ quotient-converges, then $\mathbbm{1}^\top G_n^k\mathbbm{1}$ converges for each $k$ by Proposition~\ref{prop:polys_are_testable}, hence $f(G_n)$ converges as well for any fixed $\alpha\in(0,1)$.
\end{example}

\subsection{Missing Proofs from Section~\ref{sec:sampling}}\label{sec:missing_proofs_sampling}
In this section we give all the proofs missing from Section~\ref{sec:sampling}.

\subsubsection*{Proofs from Section~\ref{sec:szemeredy}}
In this section, we prove the two lemmas used in Section~\ref{sec:szemeredy} to prove Theorem~\ref{thm:convergence_of_samples_v2}. We begin by proving Lemma~\ref{lem:empirical_coupling} by constructing an explicit coupling between $\msf M$, $\msf M_{\msf G^{(n)}[\msf M]}$, and $\msf G^{(n)}[\msf M]$ with the claimed properties.
\begin{proof}[Proof (Lemma~\ref{lem:empirical_coupling}).]
    Sample $\mu\sim\msf M$ and set $\msf M'=\mu$. Next, sample $(x_i,y_i)\overset{iid}{\sim}\mu$, let $\{t_1,\ldots,t_N\}=\{x_i\}\cup\{y_i\}$, form $\msf G^{(n)}$ as in Section~\ref{sec:sampling_edges}, and set 
    \begin{equation}\label{eq:Mgn_couple}
        \msf M'_{\msf G^{(n)}}=\frac{1}{n}\sum_{i=1}^n\delta_{(x_i,y_i)} = \sum_{i,j=1}^N(\msf G^{(n)})_{i,j}\delta_{(t_i,t_j)}.    
    \end{equation}
    We proceed to argue that this coupling satisfies all the claimed properties.
    
    By construction, we have $\msf M'\overset{d}{=}\msf M$ and $\msf G^{(n)}\overset{d}{=}\msf G^{(n)}[\msf M]$. Next, because $\msf M$ is exchangeable, we claim that $t_1,\ldots,t_N\overset{iid}{\sim}\mathrm{Unif}([0,1])$. 
    Indeed, $(x_i,y_i)\overset{d}{=}(\sigma(x_i),\sigma(y_i))$ for all $\sigma\in S_{[0,1]}$, hence $(x_i,y_i)\sim \theta\lambda^2+(1-\theta)\lambda_D$. Therefore, for each $i$ either $x_i=y_i\sim\mathrm{Unif}([0,1])$ or $x_i,y_i\overset{iid}{\sim}\lambda^2$. Combining this with the fact that the $(x_i,y_i)$ are independent for different $i$, we obtain the claim.
    Therefore, $\msf M_{\msf G^{(n)}[\msf M]}\overset{d}{=}\frac{1}{n}\sum_{i=1}^n\delta_{(x_i,y_i)}=\msf M'_{\msf G^{(n)}}$, the latter being an empirical measure drawn from $\mu\sim\msf M$. 
    Finally, we have $\msf M'|\msf G^{(n)}\overset{d}{=}\msf M$ and $\msf M_{\msf G^{(n)}}'|\msf G^{(n)}\overset{d}{=}\msf M_{\msf G^{(n)}}$ as can be seen from~\eqref{eq:Mgn_couple}, as $\msf G^{(n)}$ only depends on the multiplicities of $\{(t_i,t_j)\}$ rather than their locations, and the multiplicities and locations of a random exchangeable measure are independent of each other~\cite[Thm.~2]{kallenberg1990exchangeable}.
\end{proof}
We now turn to proving Lemma~\ref{lem:VC_of_AAR}, combining standard results from VC theory.
\begin{proof}[Proof (Lemma~\ref{lem:VC_of_AAR}).]
    Fix any $\mu \in\mc P([0,1]^2)$. By the proof of~\cite[Thm.~4.10]{wainwright2019high}, we have
    \begin{equation*}
        \underset{\substack{\mu^{(n)}\\ \textrm{emp. meas.}\sim \mu}}{\mbb E}\sup_{R\in\mc{AAR}}|\mu_n(R)-\mu(R)|\leq 2R_n(\mc F_{\mc{AAR}}),
    \end{equation*}
    where $R_n(\mc F)=\underset{\substack{X_i\overset{iid}{\sim}\mu\\\varepsilon_i\overset{iid}{\sim}\mathrm{Unif}(\{\pm 1\})}}{\mbb E}\sup_{f\in\mc F}\left|\frac{1}{n}\sum_{i=1}^n\varepsilon_if(X_i)\right|$ is the Rademacher complexity of a class of functions $\mc F$ and we set $\mc F_{\mc{AAR}}=\{\mathbbm{1}_R:R\in\mc{AAR}\}$. We further have by~\cite[Ex.~5.24]{wainwright2019high} that
    \begin{equation*}
        R_n(\mc F) \leq \frac{24}{\sqrt{n}}\int_0^{\mathrm{diam}(\mc F)}\mbb E\sqrt{\log N(t;\mc F;\|\cdot\|_{\mu_n})}\, dt,
    \end{equation*}
    where $N(t;\mc F;\|\cdot\|_{\mu_n})$ is the $t$-covering number of $\mc F$ in the (random) metric $\|f-g\|_{\mu_n}=\sqrt{\mbb E_{\mu_n}(f-g)^2}$ defined by the empirical measure $\mu_n$. If $\mc F$ is class of $\{0,1\}$-valued functions, then $\mathrm{diam}(\mc F)\leq 1$. 
    Finally, we have by~\cite[Cor.~1]{HAUSSLER1995217} that 
    \begin{equation*}
        N(t;\mc F_{\mc{AAR}};\|\cdot\|_{\mu})\leq e(V+1)\left(\frac{2e}{t}\right)^{V},
    \end{equation*}
    for any measure $\mu$, where $V=\mathrm{VC}(\mc{AAR})=4$ is the VC-dimension of the class of axis-aligned rectangles. Combining the above results, we conclude that
    \begin{equation*}
       \underset{\substack{\mu^{(n)}\\ \textrm{emp. meas.}\sim \mu}}{\mbb E}\sup_{R\in\mc{AAR}}|\mu_n(R)-\mu(R)|\leq \frac{48}{\sqrt{n}}\int_0^1\sqrt{\log(5e) + 4\log(2e/t)}\, dt\leq \frac{174}{\sqrt{n}},
    \end{equation*}
    where we numeriucally evaluated the last integral to 12 digits of accuracy to get $3.624000076562\leq 3.625$.
\end{proof}
\subsubsection*{Proof of Proposition~\ref{prop:structure_result} from Section~\ref{sec:structure_of_lims}}
Using Theorem~\ref{thm:convergence_of_samples_v2} and its proof, we can now prove Proposition~\ref{prop:structure_result} by comparing both the quotient-convergent sequence $(G_n)$ and its limit $\msf M$ to their edge-sampled random graphs.
\begin{proof}[Proof (Proposition~\ref{prop:structure_result}).]
    Since $(G_n)$ quotient-converges to $\msf M$, we have $W_1(\msf G^{(k)}[\msf M_{G_n}],\msf G^{(k)}[\msf M])\to0$ as $n\to\infty$ for each $k\in\NN$. Pick $k\geq (174/\epsilon)^2$, sample $(G_n^{(k)},G_{\msf M}^{(k)})$ from a coupling of $\msf G^{(k)}[\msf M_{G_n}]$ and $\msf G^{(k)}[\msf M]$ attaining $W_1(\msf G^{(k)}[\msf M_{G_n}],\msf G^{(k)}[\msf M])$, 
    and note that with probability $1-4e^{-2\epsilon^2}-\frac{W_1(\msf G^{(k)}[\msf M_{G_n}],\msf G^{(k)}[\msf M])}{\epsilon}$ we have 
    \begin{equation}\label{eq:all_good}
        \sup_{R\in\mc{AAR}}|\mu_{G_n^{(k)}}(R)-\mu_{G_n}(R)|\leq\epsilon,\quad \sup_{R\in\mc{AAR}}|\mu_{G_{\msf M}^{(k)}}(R)-\mu_{\msf M}(R)|\leq\epsilon,\quad \textrm{and}\quad \|G_n^{(k)} - G_{\msf M}^{(k)}\|_1\leq\epsilon,
    \end{equation}
    for $(\mu_{G_{\msf M}^{(k)}},\mu_{\msf M})\sim(\msf M_{G_{\msf M}^{(k)}}',\msf M')$ and $(\mu_{G_n^{(k)}},\mu_{G_n})\sim(\msf M_{G_n^{(k)}}',\msf M_{G_n}')$ sampled from the coupling of Lemma~\ref{lem:empirical_coupling} conditioned on $G_{\msf M}^{(k)}$ and $G_n^{(k)}$, respectively.
    Picking $n$ large enough, the above occurs with positive probability, so we can find deterministic $G_n^{(k)}$ and $G_{\msf M}^{(k)}$ satisfying~\eqref{eq:all_good} for all large $n$. 
    Explicitly, by Lemma~\ref{lem:empirical_coupling} there are locations $t_i\in[0,1]$ and a permutation $\pi_n\in\msf S_n$ satisfying
    \begin{equation*}
        \mu_{G_n}=\sum_{i,j=1}^n(G_n)_{i,j}\delta_{(t_i,t_j)},\quad \mu_{G_n^{(k)}}=\sum_{i,j=1}^{n}(\pi_nG_n^{(k)})_{i,j}\delta_{(t_i,t_j)}.
    \end{equation*}
    By considering rectangles in~\eqref{eq:all_good} centered at each of the atoms of $\mu_{G_n}$ and $\mu_{G_n^{(k)}}$, and around rows and columns of these two measures, we conclude that
    \begin{equation}\label{eq:all_good_1}
        \|G_n - \pi_n G_n^{(k)}\|_{\infty}\leq\epsilon,\quad \|G_n\mathbbm{1} - \pi_n G_n^{(k)}\mathbbm{1}\|_{\infty}\leq\epsilon,\quad \textrm{and}\quad \|G_n^\top\mathbbm{1} - \pi_n (G_n^{(k)})^\top\mathbbm{1}\|_{\infty}\leq\epsilon.
    \end{equation}
    Similarly, we can find a permutation $\widetilde\pi_{M}\in\msf S_M$ for large enough $M\in\NN$ and atoms $(t_i)_{i\in\NN}$ and $(t_i')_{i=1}^{M-m}$ for some $m\leq M$ satisfying
    \begin{equation*}\begin{aligned}
        &\mu_{\msf M}=\sum_{i,j\in\NN}E_{i,j}\delta_{(t_i,t_j)} + \sum_{i\in\NN}\sigma_i(\delta_{t_i}\otimes\lambda) + \sum_{i\in\NN}\varsigma_i(\lambda\otimes\delta_{t_i}) + \theta\lambda^2+\vartheta\lambda_D,\\ 
        &\mu_{G_{\msf M}^{(k)}}=\sum_{i,j=1}^m(\widetilde\pi_{M}G_{\msf M}^{(k)})_{i,j}\delta_{(t_i,t_j)} + \sum_{i=1}^m\sum_{j=m+1}^M(\widetilde\pi_{M}G_{\msf M}^{(k)})_{i,j}\delta_{(t_i,t_j')} + \sum_{i=m+1}^M\sum_{j=1}^m(\widetilde\pi_{M}G_{\msf M}^{(k)})_{i,j}\delta_{(t_i',t_j)}\\&\qquad\qquad + \sum_{i,j=m+1}^M(\widetilde\pi_{M}G_{\msf M}^{(k)})_{i,j}\delta_{(t_i',t_j')}.
    \end{aligned}\end{equation*}
    Here we have separated the edges sampled from each summand in $\msf M$.
    Moreover, with probability 1 we have $(\widetilde\pi_{M}G_{\msf M}^{(k)})_{i,j}\in\{0,1/k\}$ if $i>m$ or $j>m$.
    Note that $k^{-1}\leq(\epsilon/174)^2\leq\epsilon$ for all small $\epsilon>0$.
    By considering similar rectangles in~\eqref{eq:all_good}, we therefore
    \begin{equation}\label{eq:all_good_2}
        \|E-\widetilde\pi_MG_{\msf M}^{(k)}\|_{\infty}\leq\epsilon,\quad \|E\mathbbm{1}+\sigma - \widetilde\pi_M G_{\msf M}^{(k)}\mathbbm{1}\|_{\infty}\leq \epsilon,\quad \textrm{and}\quad \|E^\top\mathbbm{1}+\varsigma - (\widetilde\pi_M G_{\msf M}^{(k)})^\top\mathbbm{1}\|_{\infty}\leq\epsilon.
    \end{equation}
    Finally, since $\|G_n^{(k)}-G_{\msf M}^{(k)}\|_1\leq \epsilon$, we conclude that
    \begin{equation}\label{eq:all_good_3}
        \|G_n^{(k)}-G_{\msf M}^{(k)}\|_{\infty}\leq\epsilon,\quad \|G_n^{(k)}\mathbbm{1}-G_{\msf M}^{(k)}\mathbbm{1}\|_{\infty}\leq\epsilon,\quad \textrm{and}\quad \|(G_n^{(k)})^\top\mathbbm{1} - (G_{\msf M}^{(k)})^\top\mathbbm{1}\|_{\infty}\leq\epsilon.
    \end{equation}
    Combining~\eqref{eq:all_good_1},~\eqref{eq:all_good_2}, and~\eqref{eq:all_good_3}, we obtain
    \begin{equation*}
        \|\widetilde\pi_M\pi_n^{-1}G_n - E\|_{\infty}\leq 3\epsilon,\quad \|\widetilde\pi_M\pi_n^{-1}G_n\mathbbm{1} - (E\mathbbm{1}+\sigma)\|_{\infty}\leq 3\epsilon,\quad \textrm{and}\quad \|\widetilde\pi_M\pi_n^{-1}G_n^\top\mathbbm{1} - (E^\top\mathbbm{1}+\varsigma)\|_{\infty}\leq3\epsilon.
    \end{equation*}
    Since $\epsilon>0$ was arbitrary, our first three claims are proved. For the last claim, observe that $\lim_n\mathrm{Tr}(G_n)=\lim_n\mathrm{hom}([1],G_n)$ exists as it is the limit of the homomorphism numbers of a single self-loop. By Corollary~\ref{cor:homs_determine_limits} and the proof of Proposition~\ref{prop:hom_conv}, the limit of the homomorphism numbers is the same for any sequence of graphs quotient-converging to $\msf M$. By considering the sequence of graphs~\eqref{eq:particular_sequence} derived from $\msf M$, we conclude that $\lim_n\mathrm{Tr}(G_n)=\mathrm{Tr}(E)+\vartheta$, proving the last claim.
\end{proof}

\section{Equipartition-Consistent Random Graph Models}\label{sec:eqp_models}

Each grapheur $\msf M$ yields a sequence of random graphs $(\msf G_k[\msf M])$ via the construction~\eqref{eq:graph_from_measure}. In Section~\ref{sec:eqp_for_real}, we show that these graphs are related to each other via equipartitions, and hence form an equipartition-consistent random graph model (Definition~\ref{def:eqp_models}). We then prove that realizations of these random graphs $(\msf G_k[\msf M])$ converge back to the grapheur $\msf M$ in a suitable fashion. Finally, in Section~\ref{sec:eqp_proof} we prove Theorem~\ref{thm:eqp_models}, stating that any equipartition-consistent random graph model is a mixture of ones obtained from grapheurs as above.


\subsection{Equipartition Consistency}\label{sec:eqp_for_real}
We begin by arguing that $(\msf G_k[\msf M])$ is equipartition-consistent. Note that $d_{k,nk}\circ F_{nk,m}\overset{d}{=}F_{k,m}$ for any $k,n,m\in\NN$ by~\cite[Lemma~4.13]{levin2025deFin}, since each $i\in[m]$ is mapped independently by $F_{nk,m}$ to one of the fibers of $d_{k,nk}$ with the same probability. Therefore, for any $G\in\Delta^{n\times n}$ the sequence of distributions $(\mathrm{Law}(\rho(F_{k,n})G_n))_k$ is indeed an equipartition-consistent model. 
If $(G_m)$ is a sequence of finite graphs converging to a grapheur $\msf M$, then $\rho(F_{k,m})G_m\to \msf G_k[\msf M]$ and $\rho(F_{nk,m})G_m\to\msf G_{nk}[\msf M]$ weakly, hence
$$\mathrm{Law}(\rho(d_{k,nk})\msf G_{nk}[\msf M])=\lim_m\mathrm{Law}(\rho(d_{k,nk}\circ F_{nk,m})G_m) = \lim_m\mathrm{Law}(\rho(F_{k,m})G_m)=\mathrm{Law}(\msf G_k[\msf M]),$$
showing that $(\mathrm{Law}(\msf G_k[\msf M]))_k$ is equipartition-consistent for any grapheur $\msf M$.

We now ask whether samples from an equipartition-consistent model $(\msf G_k[\msf M])$ obtained from a grapheur $\msf M\in\mc M$ converge back to $\msf M$. The answer depends on how this sampling is performed.
\begin{proposition}\label{prop:eqp_as_convergence}
    For any grapheur $\msf M\in\mc M$, denote by $(\mu_k=\mathrm{Law}(\msf G_k[\msf M]))_k$ its associated equipartition-consistent random graph model.
    \begin{enumerate}
        \item Fix a grapheur $\msf M\in\mc M$, draw $\mu\sim\msf M$, and let $\msf Q_k=(\mu(I_i^{(n)}\times I_j^{(n)}))_{i,j\in[k]}$ for each $k\in\NN$. Then $\mathrm{Law}(\msf Q_k)=\mu_k$ for all $k\in\NN$, and $(\msf Q_k)$ is quotient-convergent to $\msf M$ almost surely.

        \item Suppose $\msf M=\delta_{(T_1,T_2)}$ is the grapheur associated to a single edge. Draw $\msf R_k\sim\mu_k$ independently for different $k$. Then $\limsup_kW_{\square}(\msf M_{\msf R_k},\msf M)\geq 1/4$ almost surely, so $(\msf R_k)$ does not converge to $\msf M$.
    \end{enumerate}
\end{proposition}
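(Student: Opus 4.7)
The first assertion of Part 1 is immediate: $G_k=(\mu(I_i^{(k)}\times I_j^{(k)}))_{i,j}$ is exactly the matrix $\msf G_k[\msf M]$ evaluated on the sample $\mu\sim\msf M$, so $G_k\sim\mu_k$ by definition. For the quotient-convergence claim, Proposition~\ref{prop:M0_compact} reduces the problem to showing that for every $\ell\in\NN$ and almost every $\mu$, the conditional law of $\rho(F_{\ell,k})G_k$ given $\mu$ converges weakly to $\mathrm{Law}(\msf G_\ell[\msf M])$ as $k\to\infty$.

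The approach is to write $(\rho(F_{\ell,k})G_k)_{a,b}=\mu(A_a\times A_b)$ where $A_a=\bigcup_{F(i)=a}I_i^{(k)}\subseteq[0,1]$ are the random sets formed by fibers of $F$, and to analyze each term in the Kallenberg decomposition~\eqref{eq:kallenberg_char} of $\mu$ separately. The continuous components $\theta\lambda^2$ and $\vartheta\lambda_D$ yield deterministic limits $\theta/\ell^2$ and $(\vartheta/\ell)\mathbbm{1}[a=b]$, respectively, by the strong law of large numbers applied to $\lambda(A_a)=|F^{-1}(a)|/k$. An atom $E_{i,j}\delta_{(T_i,T_j)}$ contributes $E_{i,j}\mathbbm{1}[F(c_i^{(k)})=a,\,F(c_j^{(k)})=b]$, where $c_i^{(k)}\in[k]$ is the cell index of $T_i$; since the $T_i$ are distinct almost surely, the $c_i^{(k)}$ are distinct for all large $k$, so the assignments $(F(c_i^{(k)}))_i$ become iid uniform on $[\ell]$. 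Partial-line terms $\sigma_i\delta_{T_i}\otimes\lambda$ and $\varsigma_i\lambda\otimes\delta_{T_i}$ are handled analogously. Matching the resulting limit against the Kallenberg form of $\msf G_\ell[\msf M]$, in which a fresh $\mu'\sim\msf M$ assigns each atom to a cell $I_a^{(\ell)}$ via iid uniforms $T'_i\in[0,1]$, yields the desired distributional identification.

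For Part 2, a sample $G_k\sim\mu_k$ has exactly one nonzero entry, equal to 1, at position $(i^*_k,j^*_k)$ with $i^*_k,j^*_k$ independent and uniform in $[k]$; in particular $\mbb P[E_k]=1/k$ where $E_k=\{i^*_k=j^*_k\}$. On $E_k^c$, the grapheur $\msf M_{G_k}$ is distributed as $\delta_{(U_1,U_2)}$ for $U_1,U_2\overset{iid}{\sim}\mathrm{Unif}([0,1])$, matching $\mathrm{Law}(\msf M)$. On $E_k$, however, $\msf M_{G_k}$ is distributed as $\delta_{(U,U)}$ for a single uniform $U\in[0,1]$. Testing the rectangle $[0,1/2]^2$ gives $\mbb E\,\delta_{(U,U)}([0,1/2]^2)=1/2$ while $\mbb E\,\delta_{(T_1,T_2)}([0,1/2]^2)=1/4$, so Jensen's inequality forces every coupling of these two random measures to satisfy $\mbb E|\msf M_1'([0,1/2]^2)-\msf M_2'([0,1/2]^2)|\geq 1/4$, yielding $W_{\square}(\msf M_{G_k},\msf M)\geq 1/4$ on $E_k$. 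Since the $G_k$ are drawn independently, the events $(E_k)_k$ are independent with $\sum_k\mbb P[E_k]=\infty$, and the second Borel--Cantelli lemma gives $E_k$ infinitely often almost surely, yielding $\limsup_k W_{\square}(\msf M_{G_k},\msf M)\geq 1/4$ almost surely.

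The main obstacle lies in Part 1: rigorously establishing the joint weak convergence of $(F(c_i^{(k)}))_i$ to an iid uniform sequence on $[\ell]$, and then combining this with the law-of-large-numbers contributions of the continuous pieces to reassemble the Kallenberg form of $\msf G_\ell[\msf M]$. A compactness reduction on $\Delta^{\ell\times\ell}$ to moment convergence should work, but the moment computations must carefully track coincidences among cell indices as well as the interaction between the atom and partial-line pieces of $\mu$.
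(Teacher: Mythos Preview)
Your Part~2 is essentially the paper's argument: the diagonal event $E_k$ has probability $1/k$, a single test rectangle (you use $[0,1/2]^2$, the paper uses $[0,1/2]\times[1/2,1]$) together with $|\mbb E X-\mbb E Y|\leq\mbb E|X-Y|$ gives $W_{\square}\geq 1/4$ on $E_k$, and the second Borel--Cantelli lemma closes.

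Your Part~1 is correct but takes a genuinely different route. The paper does not pass through $\msf G_\ell[\cdot]$ for each $\ell$; it bounds $W_{\square}(\msf M_{G_k},\msf M)$ directly via an explicit coupling. Concretely: fix $\epsilon>0$, choose $N$ so that the tail mass $\sum_{i>N\text{ or }j>N}E_{i,j}+\sum_{i>N}(\sigma_i+\varsigma_i)\leq\epsilon$, observe that the realized locations $T_1,\ldots,T_N$ are a.s.\ distinct and hence lie in distinct cells $I^{(k)}_{c_p}$ for all large $k$, and permute $G_k$ so that $c_p\mapsto p$. This puts $\pi_kG_k$ into the form of the explicit sequence~\eqref{eq:particular_sequence} up to an $\ell_1$ error $\leq\epsilon$. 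Now couple $\msf M_{G_k}$ and $\msf M$ by drawing \emph{common} fresh locations $(T_i')$ and using them in both~\eqref{eq:step_measure} and~\eqref{eq:kallenberg_char}; the resulting $W_{\square}$ discrepancy is controlled by the coefficient mismatch plus the tail, giving $\limsup_kW_{\square}(\msf M_{G_k},\msf M)\leq\epsilon$.

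This direct-coupling approach sidesteps precisely the obstacle you flag in your last paragraph: there is no need to track moments, no coincidence bookkeeping among cell indices, and no joint-convergence argument combining the iid-uniform assignments $(F(c_i^{(k)}))_i$ with the law-of-large-numbers behavior of $|F^{-1}(a)|/k$. Your route would work---once the $c_i^{(k)}$ are distinct, $(F(c_i^{(k)}))_{i\leq N}$ is exactly iid uniform, and since $|F^{-1}(a)|/k\to 1/\ell$ in probability to a constant, Slutsky gives the joint weak limit without the moment calculations you anticipate---but it amounts to reproving the content of the third part of Theorem~\ref{thm:limits_formal} cell-by-cell, whereas the paper simply reuses the coupling behind that proof. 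One minor point: your invocation of the \emph{strong} law for $|F^{-1}(a)|/k$ is not quite right, since the maps $F_{\ell,k}$ are fresh for each $k$; only convergence in probability is available, but that is all weak convergence requires.
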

\begin{proof}
    For the first claim, fix any $\epsilon>0$ and let $N\in\NN$ be sufficiently large so that $\sum_{i>N\textrm{ or } j>N}E_{i,j} + \sum_{i>N}(\sigma_i+\varsigma_i)\leq\epsilon$. Drawing $\mu\sim\msf M$ amounts to sampling locations $T_i\overset{iid}{\sim}\mathrm{Unif}([0,1])$ in~\eqref{eq:kallenberg_char}, and these samples are distinct almost surely. Therefore, there exists $K\geq N\in\NN$ such that $T_1,\ldots,T_N$ lie in distinct intervals $(I_i^{(k)})_{i\in[k]}$ for all $k\geq K$. Therefore, for all $k\geq K$ we can find a permutation $\pi_k\in\msf S_k$ such that $\pi_k\msf Q_k = \rho(\iota_{k,N})(E_{i,j}+\sigma_i/k+\varsigma_j/k)_{i,j\in[N]} + N_k + \theta\frac{1}{k^2}\mathbbm{1}_k\mathbbm{1}_k^\top + \vartheta\frac{1}{k}I_k$ where $\|N_k\|_1\leq \epsilon$. Using the coupling $(\msf M_{\msf Q_k}',\msf M')$ obtained by drawing random locations $(T_i)$, setting $\msf M'$ to~\eqref{eq:kallenberg_char}, and setting $\msf M'_{\msf Q_k}=\sum_{i,j=1}^N(\pi_k \msf Q_k)_{i,j}\delta_{(T_i,T_j)}$, we conclude that $\limsup_{k\to\infty}W_{\square}(\msf M_{\msf Q_k},\msf M)\leq \epsilon$. 
    Since $\epsilon>0$ was arbitrary, we obtain the claimed convergence.

    For the second claim, note that with probability $1/k$ we have $T_1,T_2\in I_i^{(k)}$ for the same $i\in[k]$, in which case $\msf R_k=\diag(0,\ldots,1,\ldots,0)$ and $W_{\square}(\msf M_{\msf R_k},\msf M)\geq1/4$ as can be seen by setting $S=[0,1/2]$ and $T=[1/2,1]$ in~\eqref{eq:dual_cut_metric}. Since these events are independent and the sum of their probabilities $\sum_k k^{-1}$ diverges, Borel--Cantelli implies that $\limsup_k W_{\square}(\msf M_{\msf R_k},\msf M)\geq1/4$ almost surely.
\end{proof}

In words, the quotients must be sampled in a manner that is related across different sizes for the samples to converge to the underlying grapheur; if we instead sample random quotients independently for each size, the resulting sequence almost surely does not converge to the underlying grapheur.
In contrast, if we sample growing-sized random graphs from a graphon independently for different sizes, then the resulting sequence does converge back to the graphon almost surely as a consequence of Szemer\'edi's regularity lemma~\cite[Lemma~11.8]{lovasz2012large}. 
Proposition~\ref{prop:eqp_as_convergence}(2) thus implies that the analog of Theorem~\ref{thm:sampling_intro} fails for random quotients. More precisely, for any rate of convergence $(R(k)\in\RR_{\geq0})_{n\in\NN}$ with $R(k)\to 0$, we have $\sum_{k\in\NN}\mathbb{P}[W_{\square}(\msf M,\msf G_k[\msf M])>R(k)]=\infty$ by Borel--Cantelli.
We leave the precise analysis of convergence rates of random quotients for future work.   

\subsection{Proof of Theorem~\ref{thm:eqp_models}}\label{sec:eqp_proof}
Our goal in this section is to prove Theorem~\ref{thm:eqp_models}, showing that every equipartition-consistent random graph model arises from a unique random exchangeable measure. We deduce the existence of the desired random measure from results on inverse limits of random histograms~\cite{orbanz2011projective}, and proving that this random measure is exchangeable proceeds via standard approximation arguments. To apply the results of~\cite{orbanz2011projective}, we require the following lemma on the sequence of means of an equipartition-consistent model.
\begin{lemma}\label{lem:means_of_eqp}
    For any equipartition-consistent random graph model $(\mu_k)$, there exists $\theta\in[0,1]$ satisfying $\mbb E_{X\sim\mu_k}X = \theta\frac{1}{k^2}\mathbbm{1}_k\mathbbm{1}_k^\top + (1-\theta)\frac{1}{k}I_k$ for all $k\in\NN$.
\end{lemma}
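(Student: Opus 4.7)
The plan is to first deduce permutation invariance of the means from equipartition-consistency with $n=1$, and then use the scaling behaviour of larger equipartitions $d_{k,nk}$ to pin down these means up to a single parameter $\theta\in[0,1]$.

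For the first step, I observe that any permutation $\pi\in\msf S_k$ is itself an equipartition $[k]\to[k]$ (with singleton fibers), and that $\rho(\pi)G$ is simply the vertex-relabelled graph with entries $G_{\pi^{-1}(i),\pi^{-1}(j)}$. Applying Definition~\ref{def:eqp_models} with $n=1$ therefore gives $\rho(\pi)\msf G_k\overset{d}{=}\msf G_k$ for every $\pi\in\msf S_k$, so the mean $\overline X_k:=\mbb E_{X\sim\mu_k}X$ is invariant under conjugation by all permutations. This forces $\overline X_k=a_k I_k+b_k(\mathbbm{1}_k\mathbbm{1}_k^\top-I_k)$ for some $a_k,b_k\geq 0$ satisfying $ka_k+k(k-1)b_k=1$.

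For the second step, I use equipartition-consistency between $\msf G_k$ and $\msf G_{nk}$ for arbitrary $n$. A direct application of~\eqref{eq:quotient_graph} shows that $\rho(d_{k,nk})I_{nk}=nI_k$ (each fiber contributes $n$ of the diagonal $1$'s and no off-diagonal entries) and $\rho(d_{k,nk})\mathbbm{1}_{nk}\mathbbm{1}_{nk}^\top=n^2\mathbbm{1}_k\mathbbm{1}_k^\top$. Writing $\overline X_{nk}=(a_{nk}-b_{nk})I_{nk}+b_{nk}\mathbbm{1}_{nk}\mathbbm{1}_{nk}^\top$, applying $\rho(d_{k,nk})$ by linearity, and matching coefficients with the analogous expansion of $\overline X_k$ (using the linear independence of $I_k$ and $\mathbbm{1}_k\mathbbm{1}_k^\top$ for $k\geq 2$) yields $b_k=n^2 b_{nk}$. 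For any two sizes $k_1,k_2\geq 2$, both divide $k_1k_2$, so $b_{k_1k_2}=b_{k_1}/k_2^2=b_{k_2}/k_1^2$, which forces $k^2 b_k$ to be a constant $\theta$ independent of $k$. Substituting $b_k=\theta/k^2$ back into $ka_k+k(k-1)b_k=1$ gives $a_k=(1-\theta)/k+\theta/k^2$, which rearranges exactly into the claimed form $\overline X_k=\theta\,\tfrac{1}{k^2}\mathbbm{1}_k\mathbbm{1}_k^\top+(1-\theta)\,\tfrac{1}{k}I_k$. The case $k=1$ is automatic since $\mu_1$ must be the point mass at $[1]\in\Delta^{1\times 1}$ and both sides reduce to $1$ for any $\theta$.

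Finally, I will verify $\theta\in[0,1]$: nonnegativity of $b_k$ gives $\theta\geq 0$, while nonnegativity of $a_k=\tfrac{1}{k}(1-\theta+\theta/k)$ for all $k\geq 2$ gives $\theta\leq k/(k-1)$, and letting $k\to\infty$ yields $\theta\leq 1$. I do not anticipate any significant obstacle; the entire argument is an interpolation between a permutation-invariance statement (from the $n=1$ case) and a scaling relation (from general $n$), both immediate consequences of equipartition-consistency combined with the elementary quotient identities $\rho(d_{k,nk})I_{nk}=nI_k$ and $\rho(d_{k,nk})\mathbbm{1}_{nk}\mathbbm{1}_{nk}^\top=n^2\mathbbm{1}_k\mathbbm{1}_k^\top$.
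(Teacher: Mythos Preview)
Your proposal is correct and follows essentially the same route as the paper: both derive permutation invariance of $\mu_k$ (you explicitly from the $n=1$ case of Definition~\ref{def:eqp_models}, the paper by asserting exchangeability), write the mean in the two-parameter form dictated by this invariance, and then use $\rho(d_{k,nk})\overline X_{nk}=\overline X_k$ together with the linear independence of $I_k$ and $\mathbbm 1_k\mathbbm 1_k^\top$ for $k\geq 2$ to force a single $\theta$ independent of $k$. Your treatment of the bound $\theta\in[0,1]$ via $a_k\geq 0$ and $k\to\infty$ is slightly more explicit than the paper's, but not materially different.
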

\begin{proof}
    Note that $M_n=\mbb E_{X\sim\mu_n}X\in\mbb S^n$ is a deterministic matrix invariant under simultaneous permutations of its rows and columns, because $\mu_n$ is exchangeable. Any such matrix has the same on-diagonal and off-diagonal entries, hence there exist $\theta_n,\vartheta_n\in\RR$ satisfying $M_n=\theta_n\frac{1}{n}\mathbbm{1}_{n\times n}+\vartheta_n\frac{1}{n}I_n$. Since $\mathbbm{1}^\top X\mathbbm{1}=1$ almost surely, we similarly have $\mathbbm{1}^\top M_n\mathbbm{1}=\theta_n+\vartheta_n=1$. Also, since $X\geq0$ almost surely, we must have $\theta_n\in[0,1]$. Finally, since $\rho(d_{n,N})X_N\overset{d}{=}X_n$ whenever $n|N$, we must have
    \begin{equation*}
        M_n = \rho(d_{n,N})M_N = \theta_N\tfrac{1}{n^2}\mathbbm{1}_{n\times n}+(1-\theta_N)\tfrac{1}{n}I_n.
    \end{equation*}
    Since $\frac{1}{n^2}\mathbbm{1}_{n\times n}$ and $\frac{1}{n}I_n$ are linearly independent for all $n\geq2$, we must have $\theta_N=\theta_n$ whenever $n|N$, and therefore for all $n,N\in\NN$. This proves the claim.
\end{proof}
Combining the above lemma with~\cite{orbanz2011projective}, we are ready to prove Theorem~\ref{thm:eqp_models}.
\begin{proof}[Proof (Theorem~\ref{thm:eqp_models}).]
    Uniqueness is clear since squares of the form $I_i^{(k)}\times I_j^{(k)}$ generate the Borel $\sigma$-algebra on $[0,1]^2$. To prove existence we appeal to~\cite[Thm.~1.1]{orbanz2011projective}, which states that there exists a random probability measure $\msf M$ on $[0,1]^2$ (not guaranteed to be exchangeable) satisfying $\msf M(I_i^{(k)}\times I_j^{(k)})\sim\mu_k$ for all $k$ if there is a determinisitic measure $\mu\in\mc P([0,1]^2)$ satisfying $(\mu(I_i^{(k)}\times I_j^{(k)}))_{i,j\in[k]} =\mbb E_{X\sim\mu_k}X$ for all $k\in\NN$. Such a deterministic measure exists by Lemma~\ref{lem:means_of_eqp}, since 
    \begin{equation*}
        ((\theta \lambda^2+(1-\theta)\lambda_D)(I_i^{(k)}\times I_j^{(k)}))_{i,j\in[k]}=\theta \tfrac{1}{k^2}\mathbbm{1}_{k\times k} + (1-\theta)\tfrac{1}{k}I_k.
    \end{equation*} 
    We proceed to prove that the above $\msf M$ is exchangeable. To do so, it suffices to prove that for any continuous function $f\colon[0,1)^2\to\RR$ and any measure-preserving bijection $\sigma$, we have $\mbb E_{\msf M}f\overset{d}{=}\mbb E_{\msf M}f\circ(\sigma,\sigma)$. 
    By~\cite[Prop.~9.1]{kallenberg2005probabilistic}, it further suffices to prove this equality for $\sigma\in\msf S_m$, a permutation of the $m$ intervals $I_1^{(m)},\ldots, I_m^{(m)}$ acting by $\sigma((i-1)/m + x)=(\sigma(i)-1)/m+x$ for $x\in[0,1/m)$ for $i\in[m]$. 

    For any continuous function $f\colon[0,1]^2\to\RR$, let $f_n=\sum_{i,j=1}^nf_{i,j}^{(n)}\mathbbm{1}_{I_i^{(n)}\times I_j^{(n)}}$ where $f_{i,j}=f(\frac{i}{n}-\frac{1}{2n}, \frac{j}{n}-\frac{1}{2n})$ is the value of $f$ on the centers of the squares $I_i^{(n)}\times I_j^{(n)}$, whose diameter in $\ell_{\infty}$ is $1/2n$. Since $f$ is continuous on a compact set, it is uniformly continuous, hence for any $\epsilon>0$ we have $|f(x)-f_n(x)|\leq \epsilon$ for all $x\in[0,1)^2$ and all large $n$.
    Note that
    \begin{equation*}
        \mbb E_{\msf M}f_n = \sum_{i,j=1}^nf_{i,j}^{(n)}\mc M(I_i^{(n)}\times I_j^{(n)}) = \sum_{i,j=1}^nf_{i,j}^{(n)}(\msf G_n[\msf M])_{i,j}.
    \end{equation*}
    Therefore, for any $m,k\in\NN$ and $\sigma_m\in\msf S_m$, we view $\sigma_m$ as an interval permutation and as a permutation on $[mk]$ letters permuting consecutive intervals of length $k$, in which case we get
    \begin{equation*}
        \mbb E_{\msf M}f_{mk}\circ(\sigma_m,\sigma_m) = \sum_{i,j=1}^mf_{i,j}^{(mk)}(G_{mk}^{(\msf M)})_{\sigma^{-1}(i),\sigma^{-1}(j)}\overset{d}{=}\sum_{i,j=1}^mf_{i,j}^{(mk)}(G_{mk}^{(\msf M)})_{i,j} = \mbb E_{\msf M}f_{mk}.
    \end{equation*}
    Since this holds for all $k\in\NN$, we have
    \begin{equation*}\begin{aligned}
        W_1(\mbb E_{\msf M}f,\mbb E_{\msf M}f\circ(\sigma_m,\sigma_m))\leq &W_1(\mbb E_{\msf M}f,\mbb E_{\msf M}f_{mk}) + W_1(\mbb E_{\msf M}f_{mk}, \mbb E_{\msf M}f_{mk}\circ(\sigma_m,\sigma_m))\\ &+ W_1(\mbb E_{\msf M}f_{mk}\circ(\sigma_m,\sigma_m), \mbb E_{\msf M}f\circ(\sigma_m,\sigma_m))\leq 2\epsilon,
    \end{aligned}\end{equation*}
    for all large $k$,
    where the last inequality follows since $\msf M$ is supported on $[0,1)^2=\bigcup_{i,j}I_i^{(n)}\times I_j^{(n)}$. Since $\epsilon>0$ was arbitrary, this proves the exchangeability of $\msf M$.
\end{proof}

\section{Conclusions and Future Directions}\label{sec:conclusions}
We have introduced a new notion of limits of growing graphs based on convergence of their fixed-size random quotients. We showed that this notion of convergence can be equivalently characterized in terms of convergence of graph parameters such as homomorphism numbers and quotient densities. 
We characterized limits of quotient-convergent graph sequences via grapheurs, which are certain random exchangeable measures on $[0,1]^2$. The limiting grapheur of a quotient-convergent sequence of graphs describes the asymptotic distribution of edge weight in the graphs. 
We then presented another equivalent view of quotient convergence via edge sampling, allowing us to prove an edge-based analog of Szemer\'edi's regularity lemma and to test properties of arbitrarily-large graphs by sampling edges from them. The number of edges needed is indpendent of the size of the graph, only depending on the desired accuracy and the Lipschitz constant of the graph parameter in question. 
Finally, we studied equipartition-consistent random graph models, showing that any such model corresponds to the sequence of random quotients associated to a mixture of grapheurs. 
We conclude by mentioning several questions suggested by our work.
\begin{enumerate}[font=\textbf, align=left]

    \item[(Optimal Szemer\'edi rate)] Is the rate of $O(1/\sqrt{n})$ for approximating a grapheur by a graph on $n$ edges optimal? 


    \item[(Extending graphon duality)] Is Conjecture~\ref{conj:bicont_pair} on the bi-continuity of our duality pairing true? Can the duality with graphons in Section~\ref{sec:dual_graphons} be extended to limits of non-simple graphs? Can we relate the cut metric and our $W_{\square}$-metric via this duality? 

    \item[(Inequalities in quotient densities)] Corollary~\ref{cor:inequalities_in_quotients} shows that valid inequalities in quotient densities (or equivalently, homomorphism numbers) that hold for graphs of all sizes correspond to inequalities over grapheurs. Can we exploit the structure of the space of grapheurs to prove such inequalities, in analogy with the use of graphon theory to prove homomorphism density inequalities?
\end{enumerate}

\section*{Acknowledgements}
Eitan Levin and Venkat Chandrasekaran were supported in part by AFOSR grant FA9550-23-1-0070 and by NSF grant DMS-2502377.

\bibliographystyle{unsrt}
\bibliography{free_cvx_refs}

\end{document}